\newtheorem{theorem}{Theorem}[section]
\newtheorem{proposition}[theorem]{Proposition}
\newtheorem{lemma}[theorem]{Lemma}
\newtheorem{corollary}[theorem]{Corollary}
\theoremstyle{definition}
\newtheorem{claim}{Claim}
\newtheorem{definition}[theorem]{Definition}
\newtheorem{cnj}[theorem]{Conjecture}
\newtheorem{remark}[theorem]{Remark}
\newtheorem{problem}[theorem]{Problem}
\newtheorem{fact}[theorem]{Fact}
\newcommand{\NN}{{\mathbb{N}}}
\newcommand{\sub}{\subseteq}
\newcommand{\bi}{\begin{itemize}}
\newcommand{\ei}{\end{itemize}}
\newcommand{\bc}{\begin{center}}
\newcommand{\ec}{\end{center}}
\newcommand{\ES}{\emptyset}
\newcommand{\la}{\langle}
\newcommand{\ra}{\rangle}
\newcommand{\n}{\noindent}
\newcommand{\lland}{\, \land \, }
\newcommand\+[1]{\mathcal{#1}}
\newcommand{\LR}{\Leftrightarrow}
\newcommand{\RA}{\Rightarrow}
\newcommand{\LA}{\Leftarrow}
\newcommand{\rapf}{\n $\RA:$\ }
\newcommand{\lapf}{\n $\LA:$\ }
\renewcommand \hat \widehat
\begin{document}

\title{Computably locally compact groups and their closed subgroups}

\author{Alexander G.~Melnikov}
\email{sasha.melnikov@vuw.ac.nz}
\author{Andr{\'e} Nies}
\email{andre@cs.auckland.ac.nz}

\maketitle

\begin{abstract}
 Given a computably locally compact Polish space $M$,  we show that  its  1-point compactification  $M^*$   is computably compact. Then, for a computably locally compact group $G$, we show that  the  Chabauty space $\mathcal S(G)$  of closed subgroups of $G$ has a canonical effectively-closed (i.e., $\Pi^0_1$) presentation as a subspace of the  hyperspace  $\mathcal K(G^*)$  of closed sets of  $G^*$. We construct a computable discrete abelian group $H$ such that $\mathcal S(H)$ is  not computably closed in $\mathcal K(H^*)$; in fact, the only computable points of  $\mathcal S(H)$ are   the trivial group and $H$ itself,  while  
$\mathcal S(H)$ is uncountable.
In the case that  a computably locally compact group $G$ is also totally disconnected,   we provide  a further  algorithmic characterization of $\mathcal S(G)$  in terms of the countable meet groupoid of $G$ introduced recently by the authors  (arXiv: 2204.09878). We apply our results and techniques to  show that  the index set of the computable   locally compact abelian groups that contain  a closed subgroup isomorphic to $(\mathbb{R},+)$ is arithmetical. \end{abstract}

\setcounter{tocdepth}{1}
\tableofcontents

\section{Introduction}
Computable mathematics probes the algorithmic content of mathematical concepts and results. 
Our  paper contributes to  the study of computable Polish  groups initiated in \cite{MeMo, pontr}, with special attention to   locally compact groups.
Computable profinite groups are first studied in  Metakides and Nerode
\cite{MetNer79}, La Roche \cite{LaRo1,LaRothesis} and
Smith \cite{Smith1,SmithThesis}.    For   recent results on  computable topological groups  in various classes we cite~\cite{MeMo,sinf,lupini,  CoToGr} and \cite{ArnoHaar}.  



 
Let   $G$ be a locally compact (l.c.) Polish group.  The Chabauty space $\mathcal S(G)$  is a compact topological space on the collection of all closed subgroups of $G$; its basic open sets are the collections of closed  subgroups that are disjoint from a compact set and meet finitely many open sets.  It   plays a significant role in the theory of locally compact groups; see, e.g.,~\cite{Cornulier:11}.


 In the present article we investigate the computability-theoretic aspects of the Chabauty space $\mathcal S(G)$ of a computable  locally compact (Polish) group $G$. We establish that   $\mathcal S(G)$ can be uniformly represented as an effectively closed subset of a computably compact space, but this subset may fail to be computable.  We then apply our computability-theoretic  analysis of the Chabauty space to derive a  classification-type result about (not necessarily computable) groups.

We proceed to  an informal overview and the statements of our three   results. Most notions will only be defined formally in subsequent sections. 
For instance,   the standard notions of a computably (locally) compact Polish spaces, as well as their   computably closed  and effectively closed  ($\Pi^0_1$) subsets  will be given in the Section~\ref{s: prelim}. 
All our groups and spaces will be Polish.

\subsection{The Chabauty of $G$ as a subspace  of  $\mathcal K(G^*)$}   
 

Let $G^*$  be  the   1-point compactification  of the space underlying the group $G$, and let $\mathcal K(G^*)$ be  the hyperspace of closed subsets of $G^*$. The Chabauty space  $\mathcal S(G)$   can be naturally seen as a closed subset of $\mathcal K(G^*)$; for detail see Subsection~\ref{subs:folklorestuff}. 

In Section~\ref{subs: 1-point compactification} we give a detailed   treatment of the effectiveness of how to take the 1-point compactification of a computably locally compact (l.c.) metric space.
The material of   Section~\ref{subs: 1-point compactification} is not particularly elementary, and it appears to be new in the form in which we need it. 
We believe that these results, including even some technical results of the section (such as Lemma~\ref{lemma:sigma} and Proposition~\ref{cor:HB}) are  important for the general framework of computable metric space theory.
For instance, in Theorem~\ref{thm:1po} we show that, for a computably l.c.\ space~$M$, its 1-point compactification $M^*$ is a computably compact metric space.
It follows that
the hyperspace  $\mathcal K(M^*)$ of its closed subsets is computably compact; this is  Fact~\ref{fact:ccc}. 

For a computably locally compact group~$G$, the question arises 
to what extent is  $\mathcal S(G)$ effective as  a subset $\mathcal K(G^*)$.
Our  first   result answers this as follows.

\begin{theorem}\rm\label{main:thm}
Let  $G$ be a computably locally compact group.
\begin{enumerate}
\item  $\mathcal S(G)$ is  $\Pi^0_1$ in the computably compact space $\mathcal K(G^*)$.
\item  $\mathcal S(G)$ is not in general  computably closed in $\mathcal K(G^*)$.
\end{enumerate}
\end{theorem}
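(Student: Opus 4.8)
The plan is to prove the two parts separately, since part (1) is a "positive" effectiveness result and part (2) requires constructing a counterexample group $H$.

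\medskip
\textbf{Part (1): $\mathcal S(G)$ is $\Pi^0_1$ in $\mathcal K(G^*)$.} The idea is to express "$C \in \mathcal K(G^*)$ belongs to $\mathcal S(G)$" as a conjunction of a $\Pi^0_1$ condition capturing "$C \subseteq G \cup \{\infty\}$ and $C$ is (topologically) closed under the group operations" together with possibly a condition capturing "$C$ is closed as a subset of $G$ in the subspace topology". First I would recall that, by Theorem~\ref{thm:1po} and Fact~\ref{fact:ccc}, $\mathcal K(G^*)$ is a computably compact space, so a subset is $\Pi^0_1$ iff membership can be recognized by ruling out, effectively, each basic open set it avoids; equivalently, $\mathcal S(G)$ is $\Pi^0_1$ iff its complement is the union of a c.e.\ family of basic open subsets of $\mathcal K(G^*)$. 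I would then enumerate the ways $C$ can fail to be a closed subgroup: (a) $\infty \in C$ but $C \neq G^*$ is not forced --- actually, for a closed subgroup either $C$ is compact (so $\infty \notin C$) or $C$ is non-compact and then $\overline{C}^{G^*} = C \cup \{\infty\}$; either way the set $C \cap G$ must be a subgroup of $G$ and $C$ must be exactly its closure in $G^*$. So the complement of $\mathcal S(G)$ is the set of $C$ such that there exist (rational/basic) open sets $U, V, W$ in $G$ with $C$ meeting $U$ and $V$ (witnessing points $x \in U$, $y \in V$) but $C$ disjoint from a compact neighborhood of $x\cdot y^{-1}$, i.e., $C$ fails to contain $xy^{-1}$; plus the clause forcing $\infty \in C$ whenever $C \cap G$ is unbounded, which on a computably l.c.\ space is itself a closed condition. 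Each such failure is a basic open condition in $\mathcal K(G^*)$ (meets $U$, meets $V$, misses a compact set), and using computable local compactness of $G$ together with computability of multiplication and inversion, this family is uniformly c.e.\ Assembling these enumerations gives that $\mathcal K(G^*) \setminus \mathcal S(G)$ is effectively open, i.e., $\mathcal S(G)$ is $\Pi^0_1$.

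\medskip
\textbf{Part (2): failure of computable closedness.} Here I would build a computable discrete abelian group $H$ --- necessarily with $H^*$ its 1-point compactification, a computably compact space by Theorem~\ref{thm:1po} --- such that the only computable points of $\mathcal S(H)$ are $\{0\}$ and $H$ itself, yet $\mathcal S(H)$ is uncountable; this immediately gives non-computable-closedness, since a computably closed subset of a computably compact space must be computably compact and in particular (being uncountable) would have to contain many computable points (indeed a computably closed nonempty subset always has a computable point, and an uncountable computably compact space has a perfect computable subtree, hence continuum-many, in fact a dense set of, computable points). The natural candidate is an infinite direct sum built around a $\Pi^0_1$ class with no computable members: take $H = \bigoplus_{n} \ZZ/p_n\ZZ$ (or a similar locally finite abelian group) and arrange the computable presentation so that a closed subgroup of $H$ corresponds to a "profinite-like" choice of coordinates, and the computable such subgroups are forced to be trivial or everything. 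More precisely, I would code an infinite computable tree $T \subseteq 2^{<\NN}$ with $[T] \neq \emptyset$ but $[T]$ having no computable path, and build $H$ so that the closed subgroups other than $\{0\}, H$ are in a computable bijection (compatible with the presentations) with members of $[T]$ or with branches through $T$; the subgroup $\mathcal S$-topology then makes $\mathcal S(H)$ homeomorphic (computably) to something containing a copy of $[T]$, so $\mathcal S(H)$ is uncountable but its computable points beyond the two trivial ones would yield computable paths through $T$, a contradiction.

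\medskip
\textbf{Main obstacle.} The delicate part is part (2): one must simultaneously (i) keep $H$ a \emph{computable} discrete group (so its whole group table is decidable), (ii) ensure $\mathcal S(H)$ is genuinely uncountable, and (iii) guarantee that \emph{every} computable point of $\mathcal K(H^*)$ lying in $\mathcal S(H)$ is trivial --- this last is a $\Pi^0_2$-flavoured requirement against all partial computable closed-set names, which has to be met by a careful choice of how subgroups sit inside $H$ relative to an effectively-given but path-avoiding $\Pi^0_1$ class. Getting the bookkeeping right --- matching the Chabauty topology on $\mathcal S(H)$ (built from compact sets in $H^* = H \cup \{\infty\}$) with the topology on the coding $\Pi^0_1$ class, and verifying the correspondence is a computable homeomorphism onto its image --- is where the real work lies; part (1) is comparatively routine once the computable compactness of $\mathcal K(G^*)$ from the earlier sections is in hand.
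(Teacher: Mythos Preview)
Your part~(1) is essentially the paper's argument, modulo a minor confusion about the embedding: the paper uses $H\mapsto H\cup\{\infty\}$ for \emph{every} closed subgroup, compact or not, so ``$\infty\in K$'' and ``$f(e)\in K$'' are simply two of the closed conditions defining $\mathcal S(G)$, and no separate ``unboundedness'' clause is needed.

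Part~(2), however, has a genuine gap. Your candidate $H=\bigoplus_n \ZZ/p_n\ZZ$, or indeed any locally finite abelian group, cannot work: in \emph{any} computable presentation of such a group, each nonzero element $x$ generates a \emph{finite} cyclic subgroup $\langle x\rangle=\{0,x,2x,\ldots\}$, and a finite subset of a computable discrete group is automatically computable (list multiples of $x$ until you return to $0$). Hence $H$ always has continuum-irrelevant but very much nontrivial proper computable subgroups, and no $\Pi^0_1$-class coding in the presentation can suppress them. The subgroup lattice is an algebraic invariant; you cannot arrange by choice of presentation that the nontrivial proper subgroups ``are'' the paths through a tree with no computable path.

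The paper instead takes $G$ \emph{torsion-free} --- specifically, a computable presentation of the free abelian group of rank $\omega$ --- and runs a direct finite-injury priority argument rather than a coding. Against the $e$-th potential computable subgroup $H_e$, one waits for witnesses $x\in H_e$ and $y\notin H_e$, picks fresh generators $b_j,b_k$ not yet constrained, asks whether $b_j,b_k\in H_e$, and then declares a late relation (either $b_j=Mx$ for huge $M$, or $b_j=ny$, $b_k=my$ with $\gcd(m,n)=1$) that forces $H_e$ to violate closure under the group operation. Torsion-freeness is exactly what creates the room for this: in a computable torsion-free abelian group even the cyclic subgroup $\langle x\rangle$ need not be a decidable subset, so there is no obstacle analogous to the finite-subgroup obstruction that kills your torsion example.
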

\noindent 
Our construction of the $\Pi^0_1$ presentation of $\mathcal S(G)$ has a number of further algorithmic features; the construction will appear in the proof of Proposition~\ref{thm:cha}.
One such   feature is Lemma~\ref{lem:points}:  for a closed subgroup $H$ of  $G$,
 $H \leqq_c G$ is computably closed if, and only if, 
$H \cup \{\infty\}$ is a computable point in $\mathcal K(G^*)$, with all possible uniformity.

We note that  (2) of Theorem~\ref{main:thm} is witnessed by a discrete torsion-free abelian group  that has only the two  trivial computable subgroups, yet  has uncountably many 
  subgroups (Theorem~\ref{thm:notsimple}). This   makes $\mathcal S(G)$ uncountable too.
In particular, it follows from  Lemma~\ref{lem:points} that $\mathcal S(G)$ has only two  computable  points; they correspond to the trivial subgroups of $G$.

\subsection{The t.d.l.c.~case}
Our second   result is concerned with   totally disconnected locally compact (t.d.l.c.) Polish groups. 
This class, extensively studied in the literature,  is  the narrowest class  of Polish groups  that contains both the countable discrete and the profinite  Polish groups;  recent papers include   \cite{tdlc1,tdlc2,tdlc3,tdlc4,tdlc5}.
In \cite{tdlc}, the authors have initiated a systematic study of computably t.d.l.c.~groups; see also \cite{lupini,separ}.
As we explain in Section~\ref{sec:mimi} in detail,
 there is a canonical way to pass from a computably locally compact presentation of a t.d.l.c.~group
to an effective presentation of the  \emph{meet groupoid} $\mathcal{W}(G)$ of all compact open cosets in $G$, a countable structure (see Theorem~\ref{thm:ma}). Since $G$ is t.d.l.c., it has  a neighbourhood basis of the identity consisting of compact open subgroups; this leads to 
a computable duality   between t.d.l.c.\ groups  $G$ and meet groupoids  $\mathcal{W}(G)$~\cite{tdlc}. 

We are interested in a description of the compact space $\mathcal S(G)$ based on the purely algebraic structure $\mathcal W(G)$. The  algebraic  notion of a closed-subgroup ideal  of a meet groupoid will be defined below (Def.\ \ref{df:ideals}).
\begin{theorem}\label{thm:gr}
Suppose $G$ is a computably locally compact t.d.l.c.~group.  For a closed subgroup $H$ of $G$,  the following are equivalent:


\begin{enumerate}
\item $H$ is computably closed;


\item  $H$ corresponds to a computable closed-subgroup ideal  in the (computable) dual meet groupoid $\mathcal{W}(G)$ of $G$.

\end{enumerate}
This correspondence is computably uniform\footnote{We usually think if  $G$ as given by a  computable locally compact presentation, and  of  $\mathcal S(G)$ as given by the presentation  produced in Theorem~\ref{main:thm}. We also think of  the   meet groupoid $\mathcal{W}(G)$ as given by an    effective presentation  that is produced combining the aforementioned results from \cite{tdlc,separ}; the technical details will be explained   in Section~\ref{sec:mimi}.}.
\end{theorem}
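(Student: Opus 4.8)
The plan is to set up a translation between the two descriptions of a closed subgroup $H$ of $G$ and then verify that each side is computable iff the other is, uniformly. First I would recall from Section~\ref{sec:mimi} (i.e.\ from \cite{tdlc,separ}) the explicit form of the dual meet groupoid $\mathcal W(G)$: its domain is the set of compact open cosets of $G$, with the partial ``meet'' operation given by intersection (when the intersection is again a compact open coset) and the groupoid multiplication given by the product of cosets. The key geometric fact to exploit is that, because $G$ is t.d.l.c., the compact open \emph{subgroups} form a neighbourhood basis of the identity; hence a closed subgroup $H\le G$ is completely determined by the family $\mathcal I_H$ of compact open cosets $C\in\mathcal W(G)$ with $C\cap H\ne\varnothing$, equivalently by which compact open cosets $H$ meets. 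The definition of a closed-subgroup ideal (Def.~\ref{df:ideals}) should be exactly the first-order axiomatization of the families of the form $\mathcal I_H$ inside $\mathcal W(G)$: closure under meets with the ``subgroup at a given level'', compatibility with multiplication, and a directedness/coherence condition forcing $\bigcap_n (U_n\cap H)$-type intersections to behave, so that the represented set $\bigcap\{\,C\in\mathcal I_H : \text{..}\,\}$ is genuinely a subgroup.

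Next I would prove (1)$\Rightarrow$(2). Suppose $H\leqq_c G$ is computably closed. By Lemma~\ref{lem:points}, $H\cup\{\infty\}$ is a computable point of $\mathcal K(G^*)$, with uniformity. Using computable local compactness, from a name for a compact open coset $C$ one can compute a finite cover of $C$ by basic balls of small radius; combining this with the computable-point data for $H\cup\{\infty\}$, one can decide whether $C\cap H=\varnothing$ (a compact-open set is at positive distance from the closed set $H$ exactly when it misses it, and both ``$C$ meets $H$'' and ``$C$ is bounded away from $H$'' are semidecidable because $C$ is compact and $H$ is a computable closed set in the computably compact $G^*$). Hence $\mathcal I_H$ is a computable subset of the computable domain of $\mathcal W(G)$, i.e.\ a computable closed-subgroup ideal; I would then check it satisfies the axioms of Def.~\ref{df:ideals} by unwinding the group-theoretic meaning of each axiom, which holds for any closed subgroup.

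For (2)$\Rightarrow$(1), given a computable closed-subgroup ideal $\mathcal J$ in $\mathcal W(G)$, I would reconstruct the corresponding closed subgroup $H=H_{\mathcal J}$ as $H=\bigcap_n \bigcup\{\,C\in\mathcal J : C\subseteq U_n\,\}$ along a fixed computable descending sequence $U_n$ of compact open subgroups with $\bigcap_n U_n=\{1\}$; the closed-subgroup-ideal axioms guarantee this is a closed subgroup and that it meets exactly the cosets in $\mathcal J$. To see $H$ is computably closed, I would show that from $\mathcal J$ one can compute a name for $H\cup\{\infty\}$ as a point of $\mathcal K(G^*)$ — concretely, enumerate the basic open balls that $H$ meets (this is c.e.\ from $\mathcal J$, since a ball $B$ meets $H$ iff some $C\in\mathcal J$ with $C\subseteq B$, using that the $C$'s can be taken arbitrarily small) and, on the complement side, enumerate basic balls whose closure misses $H$ (also c.e.\ from $\mathcal J$: a ball's closure misses $H$ iff it is covered by finitely many compact open cosets all outside $\mathcal J$). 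Lemma~\ref{lem:points} then upgrades this to ``$H$ is computably closed.'' Finally I would remark that all of the above constructions are index-to-index computable, and that the $\mathcal S(G)$-presentation from Theorem~\ref{main:thm}(1) together with the $\mathcal W(G)$-presentation from Section~\ref{sec:mimi} makes the claimed uniformity precise.

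The main obstacle I anticipate is not the two implications in isolation but pinning down the right definition of closed-subgroup ideal so that the reconstruction $\mathcal J\mapsto H_{\mathcal J}$ is \emph{both} well defined (always yields a closed subgroup, never a proper closed subset that happens to meet those cosets) \emph{and} inverse to $H\mapsto\mathcal I_H$; this is where the coherence/directedness axiom must do real work, and where the t.d.l.c.\ hypothesis (existence of a basis of compact open subgroups) is essential. A secondary technical point is ensuring that the decision procedure ``does $C$ meet $H$?'' in (1)$\Rightarrow$(2) is genuinely total and uniform, which relies on $G^*$ being computably compact (Theorem~\ref{thm:1po}) so that compact sets have computable finite subcovers on demand.
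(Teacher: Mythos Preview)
Your overall strategy is sound, but there are two concrete mismatches with the paper and one genuine simplification you are missing.

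First, you have the ideal on the wrong side: in the paper's Definition~\ref{def:id} an ideal is closed \emph{downward} under $\subseteq$, and the closed-subgroup ideal attached to $H$ is $\mathcal J_H=\{A\in\mathcal W:A\subseteq G\setminus H\}$, i.e.\ the cosets that \emph{miss} $H$; the subgroup is recovered as $S_{\mathcal J}=G\setminus\bigcup\mathcal J$. Your $\mathcal I_H=\{C:C\cap H\neq\varnothing\}$ is the complement, and it is \emph{not} an ideal in the sense of Def.~\ref{def:id} (it is a filter-like object). This does not kill the equivalence, since $\mathcal I_H$ is computable iff $\mathcal J_H$ is, but your axiomatic discussion (``closure under meets, directedness, coherence'') and your worry about ``pinning down the right definition'' are answering a question the paper has already settled: Lemma~\ref{lemma:Sj} shows the axioms of Def.~\ref{df:ideals} characterize exactly the ideals of the form $\mathcal J_H$. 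Relatedly, your reconstruction formula $H=\bigcap_n\bigcup\{C\in\mathcal J:C\subseteq U_n\}$ is wrong as written (with $C\subseteq U_n$ one only sees $H U_n\cap U_n$, which collapses to $\{1\}$ as $U_n\searrow\{1\}$); presumably you meant ``$C$ is a $U_n$-coset,'' which does give $\bigcap_n HU_n=H$.

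Second, the paper's route is shorter and avoids $\mathcal K(G^*)$ and Lemma~\ref{lem:points} in the core step. It first passes, via Theorem~\ref{thm:mn}, to a computably homeomorphic \emph{Baire} presentation $G\cong[T]$ on a nicely computably locally compact tree. In that presentation every $A\in\mathcal W(G)$ is encoded as a finite union $\bigcup_{\eta\in u}[\eta]_T$, and the key lemma is a two-line computation: $\mathcal J$ is computable iff the open set $R_{\mathcal J}=\bigcup\mathcal J\subseteq[T]$ is computable (for $\Rightarrow$, decide $[\sigma]\subseteq R_{\mathcal J}$ by writing $[\sigma]$ as a finite union of cosets via \cite[Lemma~2.6]{tdlc} and checking membership in $\mathcal J$; for $\Leftarrow$, decide $A\in\mathcal J$ by checking $[\eta]\subseteq R_{\mathcal J}$ for each $\eta\in u$). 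Since ``$R_{\mathcal J}$ computable open'' is literally ``$S_{\mathcal J}=H$ computably closed'' in the tree presentation, the theorem follows; the computable homeomorphism with the original presentation and Lemma~\ref{lem:points} then transfer this back and give the uniformity. Your metric approach through $\mathcal K(G^*)$ can be made to work, but it requires you to know that each $A\in\mathcal W(G)$ is uniformly a computably compact clopen subset of $G$ --- which is exactly what the Baire presentation hands you for free.
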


 The correspondence in (2) above is (in a certain sense) canonical; this will be clarified in Definition~\ref{def:id}
and   the subsequent discussion. 
As an immediate consequence of Theorem~\ref{main:thm}(2), we obtain that there is a computably t.d.l.c.~$G$ so that only the trivial closed-subgroup ideals of $\mathcal{W}(G)$ are computable.  

We  believe that Theorem~\ref{thm:gr} and Lemma~\ref{lem:points} mentioned earlier
illustrate that the effective presentation of $\mathcal S(G)$ constructed in the present paper is   (algorithmically) natural and robust. 
Our third result applies this presentation to measure the complexity of the recognition problem for certain l.c.~groups; we discuss this next.

\subsection{An application to index sets}

The approach to classification problems via index sets is standard in computable structure theory in the countable case; for a comprehensive introduction to this approach, see~\cite{GonKni}. 
 Early applications of index sets in analysis can be found in~\cite{CRindex}, and for many more recent results see \cite{surfaces,norm2}  
 and  the recent survey~\cite{MDsurvey}.
 Our third   result   essentially states that \emph{it is arithmetical to tell whether a given  locally compact abelian group contains a closed subgroup isomorphic to $\mathbb{R}$.}
We informally explain what we mean, and then   formally state the result. 
Fix a computable enumeration $(G_i)_{i \in \omega}$ of all partial computable Polish groups. Each   $G_i$ is given by a (potential) computable pseudo-metric on $\mathbb{N}$ and Turing operators potentially representing group operations upon this space. We are interested in   
the complexity of the index set of~$P$, $$I_P = \{ i: G_i \mbox{ is a computable Polish group with property }P \}.$$
The general intuition is that `tractable' properties, such as being compact, tend to have arithmetical index sets. 
In contrast, if a given property $P$ is difficult to `test' for a given space, then its index set tends to be complex as well, namely at least analytic or co-analytic complete. Such estimates reflect that such `difficult' properties are intrinsically \emph{not local} and certainly  \emph{not first-order} even if we additionally allowed quantification over effective procedures.

We return to the discussion of our third   result. Recall that a metric is \emph{proper} (or Heine-Borel) if all closed balls are compact   for the topology given by   this metric. 
We say that $G$ is properly metrized (or Heine-Borel) if the metric on $G$ is proper. Each locally compact Polish space admits a compatible proper metric (\cite{regular_stuff}); 
  in Proposition~\ref{cor:HB} we will obtain  an algorithmic version of this result.
We write  
$H \leq_c G$ for `$H$ is (isomorphic to) a closed subgroup of $G$'.
The property `$\mathbb{R} \leq_c G_i$'  seems intrinsically \emph{not} first-order and \emph{not} local. Nonetheless, using Theorem~\ref{main:thm} and the techniques developed in its proof, we establish:

\begin{theorem}\label{thm:3}  The index set
$$\{i : G_i \mbox{ is  a properly metrized  abelian group and }    \mathbb{R} \leq_c G_i \}$$
 is   $\Pi^0_3$-complete.
\end{theorem}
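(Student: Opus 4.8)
\emph{Proof plan.} I would prove membership in $\Pi^0_3$ and $\Pi^0_3$-hardness separately.

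\textbf{The upper bound.} First I would check that being a properly metrized abelian computable Polish group is $\Pi^0_3$. Being a computable Polish group is $\Pi^0_2$ (totality of the candidate pseudo-metric and of the candidate Turing operators, together with the group laws, checked on the dense set of special points), and commutativity contributes only a further $\Pi^0_1$ clause. ``Properly metrized'' asks that every closed ball $\overline{B}(a,r)$ about a special point of rational radius be compact; since the space is complete this is total boundedness --- for every $m$ there is a finite set of special points whose $2^{-m}$-balls cover $\overline{B}(a,r)$ --- and the covering assertion is $\Pi^0_1$ (it can be checked on the dense set). So ``properly metrized'' has the form $\forall(a,r,m)\,\exists(\text{finite net})[\Pi^0_1]$, which is $\Pi^0_3$. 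Next, assuming $G$ is such a group, I would invoke the structure theory of locally compact abelian groups: $G\cong\mathbb{R}^n\times H$ with $H$ having a compact open subgroup, a closed subgroup isomorphic to $\mathbb{R}$ is connected (hence inside $G^\circ$) and noncompact, and one exists iff $n\ge 1$, iff $G$ has \emph{no} compact open subgroup (if $U$ is a compact open subgroup then $G^\circ\subseteq U$ is compact; conversely, when $n=0$, van~Dantzig and the structure theorem provide one). Finally $G$ has a compact open subgroup iff for some $n$ the open subgroup $\langle B(e,2^{-n})\rangle$ generated by a small ball about $e$ is bounded: it would be open in a compact group, hence compact, and on it ``bounded'' and ``compact'' agree by properness. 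Products of special points confirmed to lie in $B(e,2^{-n})$, together with their inverses, are dense in $\langle B(e,2^{-n})\rangle$, so ``$\langle B(e,2^{-n})\rangle$ is bounded'' reads ``$\exists R$ with $d(e,w)\le R$ for every such product $w$'', which is $\Sigma^0_2$. Hence ``$\mathbb{R}\leq_c G$'' is $\Pi^0_2$ on this class of $G$, and the conjunction defining the index set is $\Pi^0_3$. (Alternatively I could phrase the upper bound through Theorem~\ref{main:thm}: the copies of $\mathbb{R}$ in $G$ are exactly the minimal noncompact connected closed subgroups, detectable inside the $\Pi^0_1$ --- hence computably compact --- presentation of $\mathcal{S}(G)$ in $\mathcal{K}(G^*)$, with Proposition~\ref{cor:HB} used to move uniformly between the properly metrized data and the computably locally compact data.)

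\textbf{Hardness.} I would fix a $\Pi^0_3$-complete set $A$, say $n\in A\iff\forall j\,P_j(n)$ with $(P_j)$ uniformly $\Sigma^0_2$, and build a computable $f$ with $G_{f(n)}=\mathbb{R}\times D_n$. Here $D_n$ is a computable presentation of the fixed group $\bigoplus_i\mathbb{Z}/2\mathbb{Z}$ carrying the invariant metric $d(x,y)=\|x+y\|$ given by integer weights $c_i=\|e_i\|\ge 1$ of the generators, with $c_i$ fixed at stage $i$. Watching the standard $\Sigma^0_2$-approximations to the $P_j$, I would arrange that for each integer $j\ge 1$ the set $\{i: c_i\le j\}$ is finite iff $P_j(n)$ holds (each time the approximation to $P_j(n)$ retracts, a fresh generator is given weight $j$; if $P_j(n)$ is true the approximation eventually stabilizes). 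Then the ball $\overline{B}(0,j)$ in $D_n$ is compact iff $\{i:c_i\le j\}$ is finite: if $c_i\to\infty$ the ball is finite, and otherwise it contains infinitely many generators pairwise at distance $\ge 2$, hence is not totally bounded. By translation-invariance this holds at every point, so $D_n$ is properly metrized iff $n\in A$. Whatever $n$ is, $D_n$ is a genuine computable abelian Polish group, so $G_{f(n)}$ is always a computable abelian Polish group with $\mathbb{R}\times\{0\}$ a closed subgroup isomorphic to $\mathbb{R}$, and is properly metrized exactly when $D_n$ is. Thus $G_{f(n)}$ lies in the index set iff $n\in A$, proving $\Pi^0_3$-hardness.

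\textbf{Main obstacle.} The hard part will be the hardness construction: choosing the weights $c_i$ as a uniformly computable integer sequence so that the limiting finiteness of the sets $\{i:c_i\le j\}$ tracks the $\Sigma^0_2$-approximations exactly, while keeping $D_n$ a legitimate computable Polish group under all approximation behaviours (this last point is mild because the metric stays invariant and integer-valued, but the bookkeeping must be kept honest). On the upper-bound side the two points needing care are the precise complexity count for ``properly metrized'' (the boundary behaviour when covering a closed ball by open balls, done uniformly) and the verification --- in full locally compact abelian generality, via the structure theorem --- of the equivalence ``$\mathbb{R}\leq_c G$ iff $G$ has no compact open subgroup''.
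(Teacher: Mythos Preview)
Your argument is essentially correct but takes a genuinely different route from the paper on both halves. For the upper bound, the paper relativizes its own Chabauty machinery: a properly metrized $G$ carries a $0'$-computable locally compact structure, so Theorem~\ref{thm:1po} and Proposition~\ref{thm:cha} relativized to $0'$ present $\mathcal S(G)$ as $\Pi^0_1(0')$ in the $0'$-computably compact $\mathcal K(G^*)$; by Lemma~\ref{0jj} this makes $\mathcal S(G)$ itself $0''$-computably compact, and since connectedness of a computably compact space is $\Pi^0_1$, one obtains a $\Pi^0_3$ test for ``$\mathcal S(G)$ connected'', which by Protasov--Tsybenko is equivalent (for abelian $G$) to $\mathbb R \le_c G$. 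Your route through the LCA structure theorem and the criterion ``no compact open subgroup'' is more direct and even yields $\Pi^0_2$ for the $\mathbb R$-clause alone, but it sidesteps the Chabauty apparatus that Theorem~\ref{thm:3} is designed to showcase. For hardness, the paper simply quotes a uniform sequence $(C_i)$ from~\cite{pontr} that is profinite exactly on a $\Pi^0_3$-complete set (and not even locally compact otherwise) and takes $C_i\times\mathbb R$; your explicit $D_n=\bigoplus_i\mathbb Z/2\mathbb Z$ with tuned integer weights is more self-contained. Two small repairs are needed in your version: in the total-boundedness count, the ``covering is $\Pi^0_1$'' step requires closed $2^{-m}$-balls (with open balls the inner clause is not $\Pi^0_1$); and in the hardness construction you must first replace each $P_j$ by $\bigwedge_{j'\le j}P_{j'}$ to enforce $P_{j+1}\Rightarrow P_j$, since the sets $\{i:c_i\le j\}$ are automatically nested in $j$ while your original $P_j$ need not be.
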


 While  $\Pi^0_3$-hardness  is  easy to verify,  establishing the  upper bound of  $\Pi^0_3$ takes much more work.  This upper bound  makes   essential use of 
 the effective presentability of   Chabauty spaces given by    Theorem~\ref{main:thm}.

There are numerous potential questions in the area of computable Polish groups, a field that remains largely unexplored. For instance, in the context of this paper, we wonder whether the `Pontryagin-Chabauty duality' established in \cite{Cornulier:11} is computable.
 (Various  effective dualities have recently been established in \cite{bastone, lupini, EffedSurvey, uptohom, topsel}.)
 A few further questions will be stated in the subsequent sections.

 The proofs of our results are divided into several lemmas and propositions
 that combine methods of computable topology with tools from computable algebra.
 Although each individual fact is not too difficult to establish, the overall complexity of our proofs arises from the  combination of these facts and techniques.

 \n {\bf Acknowledgements.} The second author was partially supported by the Marsden fund of New Zealand, UOA-1931.





\section{Computable Polish spaces and groups} \label{s: prelim}

We use   standard computability-theoretic terminology and notation as in~\cite{Soa}, except that we write ``c.e." (computably enumerable)  instead of ``r.e." (recursively enumerable). In particular,  $0^{(n)}$ denotes the Turing degree of the $(n-1)$th iteration of the halting problem, and $\Sigma^0_n, \Pi^0_n, \Delta^0_n$ denote the   levels of the arithmetical hierarchy. It is well-known that the $\Delta^0_{n+1}$ sets are exactly the sets computable relative to $0^{(n)}$, and $\Sigma^0_{n+1}$ and $\Pi^0_{n+1}$ are the classes of $0^{(n)}$-computably enumerable ($0^{(n)}$-c.e.) and the complements of $0^{(n)}$-c.e.~sets, respectively.
We also use the standard terminology of the theory of computable Polish 9aoways viewed as metric spaces)~\cite{EffedSurvey,IlKi}; we give a brief overview of it in the subsection below.

\subsection{Computable Polish spaces}\label{subs:prelimPolish}  
A Polish space $M$ is \emph{computable Polish} or \emph{computably (completely) metrized} if there is a compatible, complete metric $d$ and a countable sequence of \emph{special points} $(x_i)$ dense in $M$ such that, on input $i, j, n$, we can compute a rational number $r$ such that $|r - d(x_i, x_j)| <2^{-n}$. 


A \emph{basic open ball} is an open ball having a rational radius and centred in a special point. Let $ X$ be a computable Polish space, and $(B_i)$ is the effective list of all its basic open balls, perhaps with repetition. 
We call
\[
N^x = \{i: x \in B_{i}\}
\]
\emph{the name of $x$} (in $ X$).
A (fast) Cauchy name of a point $x$ is a sequence $(x_n)$ of special points such that $d(x_n, x)< 2^{-n}$. It is easy to see that we can uniformly effectively turn an enumeration of  $N^x$  into a fast Cauchy name of $x$, and vice versa. 
We say that a point $x$ is computable if $N^x$ is c.e.; equivalently, if there is a computable sequence $(x_n)$ of special points that is a fast Cauchy name of $x$.

We can also use basic open balls to produce names of open sets, as follows.
A \emph{name} of an open set $U$ is a set  $W \subseteq \mathbb{N}$ such that $U = \bigcup_{i \in W} B_i$, where $B_i$ stands for the $i$-th basic open set  (basic open ball).
 If  an open $U$ has a c.e.~name, then we say that $U$ is \emph{effectively open} or \emph{c.e.~open}.
 A closed set $C$ is c.e.~if the set $\{i: B_i \cap C \neq \emptyset\}$ is c.e.; equivalently,   the set possesses a uniformly computable dense sequence of points.
 A closed set is \emph{computable}~if both the set and its complement are c.e.; see \cite{EffedSurvey,IlKi} for further details. 
 Note that this is the same as to say that the function $x \mapsto d(x, C) = \sup_{c \in C} d(x,c)$ is  computable in the following sense.


\begin{definition}
We say that a map (between computable Polish spaces $X$ and $Y$) is \emph{computable} if it satisfies any of the items from the lemma below.
\end{definition}

\begin{lemma}\label{le: computable map}
Let $f\colon X\to Y$ be a function between computable Polish spaces.
The following are equivalent:
\begin{enumerate}

\item There is an enumeration operator $\Phi$ that on input a name of an open set $Y$ (in $Y$), lists a name of $f^{-1}(Y)$ (in $ X$).
\item There is an enumeration operator $\Psi$, that given the name of $x \in X$, enumerates the name of $f(x)$ in $ Y$.     \label{part: Phi}
\item There exists a uniformly effective procedure that on input  a fast Cauchy name of $x \in M$  lists a fast Cauchy name of $f(x)$ (note that the Cauchy names need not be computable).
\end{enumerate}
\end{lemma}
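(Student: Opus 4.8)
The plan is to prove the equivalence of the three characterizations of a computable map by establishing a cycle of implications, say $(1)\Rightarrow(3)\Rightarrow(2)\Rightarrow(1)$, though in practice it is cleaner to prove $(2)\Leftrightarrow(3)$ directly and then $(1)\Leftrightarrow(2)$, since (2) and (3) are merely two encodings of ``Cauchy-name-to-Cauchy-name'' continuity and the passage between names of points and enumerations of $N^x$ has already been noted to be uniformly effective in the text preceding the lemma.

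First I would record the dictionary: an enumeration of the name $N^x=\{i:x\in B_i\}$ can be uniformly turned into a fast Cauchy name of $x$ and conversely, and likewise a c.e.\ name $W$ of an open set $U=\bigcup_{i\in W}B_i$ is just an enumeration of indices of basic balls. For $(3)\Rightarrow(2)$: given an enumeration operator $\Phi$ that wants to output $N^{f(x)}$, on a new index $j$ it needs to decide whether $f(x)\in B_j$; it runs the Cauchy-name procedure of (3) on the Cauchy name extracted from the enumeration of $N^x$, obtaining better and better special-point approximations $y_n$ to $f(x)$ with $d(y_n,f(x))<2^{-n}$, and enumerates $j$ into the output once it sees $d(y_n,z_j)+2^{-n}<r_j$, where $B_j=B(z_j,r_j)$; this correctly and eventually certifies exactly the balls strictly containing $f(x)$. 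For $(2)\Rightarrow(3)$: from an enumeration of $N^{f(x)}$ one extracts a fast Cauchy name of $f(x)$ exactly as in the point-name/Cauchy-name correspondence already cited. This gives $(2)\Leftrightarrow(3)$.

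Next, $(1)\Leftrightarrow(2)$. For $(2)\Rightarrow(1)$: given a name $W$ of an open $V\subseteq Y$, I want to enumerate a name of $f^{-1}(V)$, i.e.\ enumerate basic balls $B_i\subseteq f^{-1}(V)$ whose union is $f^{-1}(V)$. The idea is the standard one: $x\in f^{-1}(V)$ iff some basic ball $B_j$ with $\overline{B_j}\subseteq$ (the relevant part of) $V$ contains $f(x)$, and the operator $\Psi$ of (2), fed the canonical enumeration of the name of an arbitrary point inside $B_i$ (more precisely, fed the ``positive information'' common to all points of $B_i$, namely which basic balls are forced to contain every point of $B_i$), will enumerate enough of $N^{f(x)}$ to certify $f(x)\in B_j$; one then enumerates $B_i$ into the name of $f^{-1}(V)$ once a suitable sub-ball $B_i$ is seen to be mapped inside $V$. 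Making ``positive information forced by $B_i$'' precise — essentially, pairs $(i,k)$ with $\overline{B_i}\subseteq B_k$ together with the metric data — and checking the union is exactly $f^{-1}(V)$ is the routine-but-fiddly part. For $(1)\Rightarrow(2)$: given the name of a point $x$, to enumerate $N^{f(x)}$ apply $\Phi$ to the name of the open set $B_j$ for each $j$; $\Phi$ outputs a name of $f^{-1}(B_j)$, which is an open set, and $x\in f^{-1}(B_j)$ iff $f(x)\in B_j$; since we have the name $N^x$ of $x$ we can recognize when $x$ lands in one of the enumerated basic balls comprising $f^{-1}(B_j)$, and at that moment we enumerate $j$ into the output. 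Uniformity in $j$ is automatic.

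The main obstacle is the direction $(2)\Rightarrow(1)$ (equivalently the careful handling of ``open-set names'' in $(1)$ at all): one must be scrupulous that an enumeration operator handed only a name of an open set $V$ — which is a \emph{set} of indices, not a point — can only use positive facts of the form ``$B_i$ is among the listed balls'', and that from this it must produce basic balls genuinely contained in $f^{-1}(V)$, never over-committing. The trick is to work with ball inclusions certified by the metric (using that $B_i\subseteq f^{-1}(B_k)$ can be witnessed by feeding $\Phi^{B_k}$ the ``forced positive information'' of $B_i$ and seeing $B_i$ come out, together with a strict-inclusion margin), and to invoke that a c.e.\ open set equals the union of the basic balls it lists. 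Everything else is bookkeeping with the standard fast-Cauchy-name $\leftrightarrow$ ball-name translations, which the paper has already licensed.
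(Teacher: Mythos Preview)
The paper does not supply a proof of this lemma; it is stated as a standard fact in computable analysis, with the surrounding text pointing to the surveys \cite{EffedSurvey,IlKi} for background. So there is nothing to compare against, and the question is simply whether your argument is sound. It is: your cycle of implications is the standard route.

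The equivalence $(2)\Leftrightarrow(3)$ is, as you say, immediate from the uniform translation between enumerations of $N^x$ and fast Cauchy names already noted in the paper. The direction $(1)\Rightarrow(2)$ is clean as written. For $(2)\Rightarrow(1)$ your description is a bit informal but the underlying idea is correct. The point you call ``positive information forced by $B_i$'' is really the \emph{use principle} (monotonicity) for enumeration operators: if $\Psi$ on input $N^x$ enumerates an index $j$, it does so on the basis of a finite $F\subseteq N^x$, and therefore enumerates $j$ on every input extending $F$; hence every $x'\in\bigcap_{i\in F}B_i$ satisfies $f(x')\in B_j$. So one searches over finite sets $F$ of ball indices and indices $j$ in the given name $W$ of $V$, and whenever $\Psi^F$ enumerates $j$ one outputs all basic balls formally contained in $\bigcap_{i\in F}B_i$. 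That formal inclusions are dense (each point of an open basic ball lies in a smaller basic ball formally included in it) guarantees the union of the output balls is exactly $f^{-1}(V)$. Stating this monotonicity step and the formal-inclusion density explicitly would turn your sketch into a complete proof; no idea is missing.
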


The first condition in the lemma above says that the map is \emph{effectively continuous}.
We also say that $f\colon X\to Y$ is \emph{effectively open} if there is an enumeration operator that given a name of an open set $A$ in $X$, outputs a name of the open set $f(A)$ in $Y$.
In particular, if $f$ is a computable and is a homeomorphism, then it is  effectively open if, and only if, $f^{-1}$ is computable. In this case we say that $f$ is a \emph{computable homeomorphism}.  

We say that two computable metrizations on the same Polish space are \emph{effectively compatible} if the identity map on the space  is a computable homeomorphism when viewed as a map from the first metrization to the second metrization under consideration.


\subsection{Computable Polish groups} 
In the discrete (at most) countable case, Mal'cev \cite{Mal} and Rabin~\cite{abR} define a computable group as follows. A   group is \emph{computable} (recursive, constructive) if its domain is a computable set of natural numbers and the group operations are computable functions upon this set.
A~computable presentation (computable copy,  constructivization) of a group is a computable group isomorphic to it.
  The study of computable groups
   is a central subject in computable structure theory; see the books  \cite{AshKn,ErGon}. For the well-studied special case of abelian groups, see the surveys~\cite{Khi,melbsl}. We adopt the following

\begin{definition}\cite{MeMo,pontr}\label{def:compgrdef}
A computable Polish group is a computable Polish space together with computable group operations $\cdot$ and $^{-1}$.
\end{definition}

The following   elementary fact illustrates that the notions   agree in the discrete case: 

\begin{fact}[\cite{CoToGr}]\label{hgotGDDD112330OOFggrs} For a discrete  countable group $G$, the following are equivalent:
 
\begin{enumerate}
\item $G$ has a computable Polish presentation as in Def.~\ref{def:compgrdef};
\item $G$ has a computable presentation. 
\end{enumerate}

\end{fact}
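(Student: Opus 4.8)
The plan is to prove both directions by unwinding the two definitions of ``computable'' for a discrete countable group and observing that each presentation can be effectively converted into the other.

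\emph{(2) $\Rightarrow$ (1).} Suppose $G$ has a computable presentation in the sense of Mal'cev--Rabin: its domain is a computable set $D \subseteq \NN$ and the operations $\cdot, {}^{-1}$ are computable on $D$. To build a computable Polish presentation, first I would equip $D$ with the discrete metric $d(x,y) = 1$ for $x \neq y$ and $d(x,x)=0$; this is trivially a computable metric (on input $i,j,n$ return $0$ or $1$ depending on whether $i = j$), it is complete, and every point is special, so $(D,d)$ is a computable Polish space whose underlying topological space is $G$ with the discrete topology. The group operations are the same computable functions as before, and since on a discrete space every function from $\NN$ is computable as a map of metric spaces (a fast Cauchy name of a point in a discrete space effectively reveals the point itself once the index of the approximation stabilises, which happens immediately), $\cdot$ and ${}^{-1}$ are computable in the sense of Lemma~\ref{le: computable map}. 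Hence $(D,d,\cdot,{}^{-1})$ is a computable Polish group isomorphic (in fact equal) to $G$.

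\emph{(1) $\Rightarrow$ (2).} Conversely, suppose $G$ is given as a computable Polish group $(M,d,\cdot,{}^{-1})$ whose underlying Polish space is homeomorphic to the countable discrete group $G$. Since $G$ is discrete, every point is isolated; in particular each element of $G$ corresponds to a special point of $M$ (a non-special point would be a non-isolated limit of special points), and distinct elements are at some positive distance apart. The key step is to extract a computable presentation with domain an initial segment or computable subset of $\NN$. I would enumerate the special points $(x_i)$ and, using that $d$ is computable and that each $x_i$ is isolated, search for a witness: since $x_i$ is isolated there is a rational $\varepsilon > 0$ with the ball $B(x_i,\varepsilon) = \{x_i\}$, and this can be confirmed effectively by checking $d(x_i, x_j)$ for all $j$ — though confirming it requires knowing $d$ exactly, which we only approximate. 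This is the main obstacle: deciding equality $x_i = x_j$ of special points and deciding isolation are not obviously decidable from approximations to $d$ alone.

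The way around the obstacle is to appeal to the hypothesis that the \emph{space} is discrete together with effectiveness of the group structure, rather than trying to decide equality naively; this is exactly the content of Fact~\ref{hgotGDDD112330OOFggrs} as cited from \cite{CoToGr}, so I would either cite that argument or reproduce it. Concretely: in a computable Polish group the map $x \mapsto d(x, e)$ is computable, and equality $x = y$ in the group is equivalent to $d(x y^{-1}, e) = 0$; combined with discreteness (so that $d(z,e)$ takes the value $0$ or is bounded below by a positive constant depending only on a neighbourhood of $e$, and by homogeneity uniformly so), one gets that the equality relation on special points is decidable, the isolating radii are computable, and one can therefore compute a transversal of special points representing $G$, transport the group operations, and relabel the domain as a computable subset of $\NN$. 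Pulling the operations back along this relabelling yields computable functions, giving the Mal'cev--Rabin presentation. The expected difficulty, then, is entirely in this last paragraph — justifying decidability of equality from computable metric plus discreteness plus homogeneity — and everything else is routine.
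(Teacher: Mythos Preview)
The paper does not give its own proof of this fact; it is stated with a citation to \cite{CoToGr} and marked as ``elementary''. So there is nothing to compare against at the level of the paper itself. Your proposal is essentially the standard argument and is correct, but two small points in your $(1)\Rightarrow(2)$ direction deserve tightening.

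First, the appeal to ``homogeneity'' is slightly misleading: the given complete metric $d$ need not be translation-invariant, so homogeneity of the group does not directly yield uniform isolation radii. What you actually need (and what your own wording almost says) is much less: since $e$ is isolated, \emph{non-uniformly} fix a rational $\epsilon>0$ with $B(e,\epsilon)=\{e\}$; then for any points $x,y$ one has $x=y$ iff $xy^{-1}=e$ iff $d(xy^{-1},e)<\epsilon$, and the last condition is decidable because $d(xy^{-1},e)$ is either $0$ or $\ge\epsilon$ and is a computable real (here $e=x_0x_0^{-1}$ is a computable point). No uniformity over all points, and no computability of isolation radii, is required --- a single rational parameter $\epsilon$ suffices as finite advice. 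Your phrase ``the isolating radii are computable'' is stronger than needed and may in fact fail for the given metric.

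Second, once equality on special points is decidable, you should note explicitly that the dense sequence is already all of $G$ (density in a discrete space is surjectivity), so the transversal enumerates $G$ without omissions; and that to compute $x_i\cdot x_j$ in the transversal one computes the product as a point, then searches the transversal for the unique match using the equality test. With these two clarifications your argument is complete.
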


The next fact will be used throughout,  often without an explicit reference.
\begin{fact}[\cite{CoToGr}]\label{fact:open}
In a computable Polish group, the multiplication and inversion operators are   effectively open.
\end{fact}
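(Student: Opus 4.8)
The plan is to recall that this is a folklore-type statement whose proof we merely sketch, since the full argument appears in \cite{CoToGr}. Let $G$ be a computable Polish group with multiplication $m \colon G \times G \to G$ and inversion $\iota \colon G \to G$, both computable by Definition~\ref{def:compgrdef}. First I would treat inversion: since $\iota$ is a computable homeomorphism of $G$ with itself and $\iota^{-1} = \iota$, the characterization in Section~\ref{subs:prelimPolish} of computable homeomorphisms immediately gives that $\iota$ is effectively open. (Concretely, given a c.e.\ name $W$ of an open set $A = \bigcup_{i \in W} B_i$, one enumerates a name of $\iota(A) = \iota^{-1}(A)$ by applying the effective-continuity operator for $\iota$ to the name of $A$.)

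The substantive point is effective openness of $m$. Here I would use the standard fact that a continuous surjective group homomorphism, and more generally the multiplication map of a topological group, is an open map, and then effectivize the usual proof. The key observation is: for a basic open ball $B = B(x_i, q)$ around a special point $x_i$ with rational radius $q$, and for any point $g \in G$, the translate $gB$ is open, and $m$ maps the open box $B(x_i,q) \times B(x_j, q')$ onto (or at least, its image contains) a controlled open neighbourhood of $x_i \cdot x_j$ whose ``size'' can be estimated effectively using the modulus of continuity of $m$ and $\iota$. More precisely, I would show that from indices for two basic open balls $B_i, B_j$ one can enumerate a name of the open set $m(B_i \times B_j)$: since $B_i \times B_j$ is open and $m$ is continuous and open as a topological fact, $m(B_i \times B_j)$ is open, and a special point $x_k$ lies in it iff there exist special points $u \in B_i$, $v \in B_j$ with $u \cdot v = x_k$ --- but to get a \emph{c.e.} description one replaces the exact equation by the $\Sigma^0_1$ condition that $x_k$ lies in the interior, witnessed by finding special $u,v$ and a rational $\varepsilon$ with $B(u \cdot v, \varepsilon) \subseteq$ (something forced into the image), which one certifies using effective continuity of $m$ together with the fact that $u \cdot v \cdot B(e,\delta) \subseteq m(B_i \times B_j)$ for suitable rational $\delta$ computed from how deep $u$ sits inside $B_i$ and $v$ inside $B_j$. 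Running this over all basic open balls $B_i \times B_j$ contained in a given open $A \subseteq G \times G$ (whose name we are handed) and taking the union yields a c.e.\ name of $m(A)$.

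The main obstacle is purely bookkeeping: making the passage from the topological openness statement ``$u\cdot v \cdot B(e,\delta) \subseteq m(B_i \times B_j)$'' into an \emph{effective} one, i.e.\ computing the witnessing rational radius $\delta$ from the data $(i,j,u,v)$ uniformly. This is handled by invoking the modulus of (uniform, on compact pieces --- or just local) continuity of $m$ extracted from its being a computable map in the sense of Lemma~\ref{le: computable map}: if $u$ lies in $B_i$ with room $2^{-n}$ to spare and $v$ lies in $B_j$ with room $2^{-n}$ to spare, then effective continuity of $m$ produces a rational $\delta>0$ such that $d(w, u\cdot v) < \delta$ implies $w = u' \cdot v'$ for some $u' \in B_i$, $v' \in B_j$ --- actually one argues in the reverse direction, using that $w \mapsto w \cdot (u\cdot v)^{-1} \cdot u$ is a computable homeomorphism (a translate), so a small ball around $u \cdot v$ is carried into a small ball around $u$, which sits inside $B_i$; pairing with $v \in B_j$ exhibits $w$ in the image. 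Once this local effective-openness of $m$ and $\iota$ is in hand, effective openness on arbitrary c.e.-open sets follows by the usual ``union over basic balls'' argument, completing the sketch.
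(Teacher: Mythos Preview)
Your treatment of inversion is identical to the paper's: since $\iota^{-1}=\iota$, effective continuity of $\iota$ immediately yields effective openness.

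For multiplication your argument is correct but organized differently from the paper's. You work locally: for special $u\in B_i$, $v\in B_j$ with $u$ sitting with rational slack inside $B_i$, you use effective continuity of the right translation $w\mapsto w\cdot v^{-1}$ to find a rational $\delta>0$ with $B(u\cdot v,\delta)\cdot v^{-1}\subseteq B_i$, so that $B(u\cdot v,\delta)\subseteq B_i\cdot\{v\}\subseteq m(B_i\times B_j)$; enumerating all such balls over all special $u,v$ gives a name for $m(B_i\times B_j)$, and the general case follows by writing an open $A\subseteq G\times G$ as a union of basic boxes. The paper instead gives a single global criterion: to enumerate $U\cdot V$ for c.e.\ open $U,V$, list every basic ball $B$ for which there is a basic ball $A$ with $A\cap U\neq\emptyset$ and $A^{-1}\cdot B\subseteq V$, the latter inclusion detected via effective continuity of the computable map $(x,y)\mapsto x^{-1}y$. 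The paper's route is shorter and avoids the bookkeeping you flag (no need to track how deeply a point sits in a ball or to quantify over special points), while your route has the modest advantage of making the underlying topological reason---openness via translations---completely explicit. Both are valid effectivizations of the same classical fact.
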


\subsection{Computable compactness} The following definition is equivalent to many other definitions of effective (computable) compactness  in the literature.

\begin{definition}\label{def:effcomp}
A compact computable Polish  space is  computably compact if there is a (partial) Turing functional that,  given a countable cover of the space,  outputs a finite subcover. 
\end{definition}

This is equivalent to saying that, for every $n$, we can uniformly produce at least one finite open $2^{-n}$-cover of the space by basic open balls. For several equivalent definitions of computable compactness, see \cite{EffedSurvey} and \cite{IlKi}. It is   well-known that, given a computable Polish space $C$ that is compact (but not necessarily computably compact), using $0'$ one can produce a sequence of basic open $2^{-n}$-covers of the space, thus making it computably compact relative to $0'$. The following  fact is folklore (see, e.g., \cite{EffedSurvey}): 


\begin{lemma}
Every computable image of a computably compact space is itself computably compact.
\end{lemma}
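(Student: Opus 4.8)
The plan is to unwind the definitions of computable compactness and of a computable surjection, and to push the given cover-to-subcover functional of the domain through the map. Suppose $f\colon X \to Y$ is a computable surjection with $X$ computably compact, and we are handed a name of an open cover $(V_j)_{j\in\NN}$ of $Y$. First I would use item (1) of Lemma~\ref{le: computable map}: the map $f$ being computable means there is an enumeration operator that, given a name of an open set of $Y$, lists a name of its $f$-preimage in $X$. Applying this uniformly to each $V_j$ produces (a name of) an open set $U_j = f^{-1}(V_j)$ of $X$, and since $f$ is a function into $Y = \bigcup_j V_j$, the family $(U_j)_{j\in\NN}$ is an open cover of $X$. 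Now I feed this cover to the computable-compactness functional for $X$, obtaining a finite set $F \subseteq \NN$ with $X = \bigcup_{j\in F} U_j$. The claim is then that $\{V_j : j\in F\}$ is a cover of $Y$: given $y\in Y$, pick (surjectivity) some $x$ with $f(x)=y$; then $x\in U_j$ for some $j\in F$, hence $y = f(x) \in V_j$. Output $F$.

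The only subtlety worth a sentence or two is the word \emph{image}: the statement says ``computable image,'' which I read as the image of a computably compact space under a computable map, the image being equipped with a computable Polish structure for which the map is computable and \emph{onto} that image. The argument above uses surjectivity in exactly one place (to conclude that $\{V_j\}_{j\in F}$ covers the target), so it is essential that we regard $f$ as mapping onto the space under consideration; if instead one only knows $f$ is computable into some larger space, the conclusion applies to the image $f[X]$ with its subspace structure, and one should note that $f[X]$ is then a (closed, compact) computable Polish subspace so that talk of covers of $f[X]$ makes sense. I would phrase the lemma and proof so that this hypothesis is explicit, avoiding any circularity about what ``the image'' is as a computable Polish space.

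There is no real obstacle here; the ``hard part,'' such as it is, is purely bookkeeping about uniformity: one must check that the whole procedure --- invoking the preimage operator on each $V_j$, assembling the resulting name of a cover of $X$, and invoking $X$'s cover functional --- is carried out by a single Turing functional with oracle the code of $f$'s computability witness and $X$'s compactness witness, so that the conclusion holds uniformly in those data. This is immediate from the fact that both Lemma~\ref{le: computable map} and Definition~\ref{def:effcomp} are stated in terms of (uniform) enumeration operators and Turing functionals. I would close with the remark that, in particular, this yields computable compactness of quotients $X/{\sim}$ by computably compact (or, more precisely, effectively closed with computable quotient map) equivalence relations, and of continuous computable retracts, which are the instances used later in the paper.
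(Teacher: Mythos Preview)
Your argument is correct and is the standard one: pull back the given open cover along the effectively continuous $f$, apply the cover functional for $X$, and use surjectivity to conclude that the finitely many $V_j$ you have selected cover $Y$. Note, however, that the paper does not actually give its own proof of this lemma; it simply records it as folklore with a reference to \cite{EffedSurvey}. So there is no ``paper's proof'' to compare against, and what you have written is exactly the routine verification one would expect.

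Two minor remarks. First, your caveat about the meaning of ``computable image'' is well placed: the intended reading is that $Y$ is a computable Polish space and $f\colon X\to Y$ is a computable surjection, which is precisely the hypothesis your argument uses. Second, your closing sentence about quotients and retracts being ``the instances used later in the paper'' is speculative and not quite accurate for this particular paper; I would drop it. The uniformity bookkeeping you mention is indeed immediate from the way Definition~\ref{def:effcomp} and Lemma~\ref{le: computable map} are phrased.
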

Both the supremum and the infimum of a computable function $f: X \rightarrow \mathbb{R}$ is computable, provided that $X$ is computably compact, and this is uniform.
We will also use that a computable closed subset of a computably compact space $M$ is itself computably compact   under the metric inherited from $M$.

\begin{lemma}\label{0jj}
Every  nonempty  co-c.e.~closed set $C$ in a computably compact Polish space $K$ has a $0'$-computable dense sequence of points. \end{lemma}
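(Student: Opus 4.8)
\textbf{Proof plan for Lemma~\ref{0jj}.}

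The plan is to enumerate, using the halting problem $0'$ as an oracle, a sequence of special points that both stays inside $C$ and eventually comes within every prescribed distance of every point of $C$. The starting observation is that computable compactness of $K$ together with the fact that $C$ is co-c.e.\ (i.e.\ its complement is effectively open) lets us, for each $n$, effectively list finitely many basic open balls of radius $2^{-n}$ that cover $K$; among these we wish to keep only those that genuinely meet $C$. The predicate ``$B_i \cap C = \emptyset$'' is $\Sigma^0_1$: it holds iff the compact set $\overline{B_i}$ (or rather a slightly larger compact ball, using that in a computably compact space closed balls are computably compact) is covered by the effectively open complement of $C$, and a cover of a computably compact set by effectively open sets is witnessed by a finite subcover, which is a $\Sigma^0_1$ event. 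Hence ``$B_i \cap C \neq \emptyset$'' is $\Pi^0_1$, and with oracle $0'$ we can decide it. (One must be a little careful: $B_i \cap C \neq \emptyset$ is not literally the same as $\overline{B_i} \cap C \neq \emptyset$, so the standard move is to work with a slightly shrunken or slightly enlarged radius and accept an error of $2^{-n}$; this is harmless for the final density conclusion.)

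First I would fix, for each level $n$, a finite $2^{-n}$-cover $\mathcal{C}_n = \{B_{i} : i \in F_n\}$ of $K$ by basic open balls, obtained uniformly and computably from computable compactness of $K$. Then, using $0'$, I would compute the subset $F_n' \subseteq F_n$ of indices $i$ such that $B_i$ (up to the $2^{-n}$ fudge described above) meets $C$. For each such $i \in F_n'$, I would $0'$-effectively search for and output a special point $p_{n,i}$ lying in $B_i$ and within distance $2^{-n}$ of $C$ — such a point exists because the center of $B_i$ is within $2^{-n}$ of $C$ once $B_i$ meets $C$, and ``$d(x, C) < 2^{-n+1}$'' for a special point $x$ is again a $\Sigma^0_1$ statement decidable with $0'$ (it is equivalent to $\overline{B(x, 2^{-n+1})} \not\subseteq K \setminus C$, the negation of a compactness-witnessed covering statement). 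The sequence $(p_{n,i})_{n, i \in F_n'}$ is then a $0'$-computable sequence of special points, each within $2^{-n}$ of $C$, so all limit points of the sequence lie in $C$ since $C$ is closed; conversely, given any $c \in C$ and any $\varepsilon > 0$, choose $n$ with $2^{-n+2} < \varepsilon$, let $B_i \in \mathcal{C}_n$ contain $c$, note $i \in F_n'$, and then $d(p_{n,i}, c) \le d(p_{n,i}, \text{center of } B_i) + d(\text{center}, c) < 2^{-n} + 2^{-n} < \varepsilon$ after adjusting constants. Hence the sequence is dense in $C$.

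The main obstacle — really the only subtlety — is the mismatch between open balls and their closures when deciding emptiness of $B_i \cap C$: computable compactness gives covering witnesses for compact sets, so one naturally tests $\overline{B_i} \cap C = \emptyset$, not $B_i \cap C = \emptyset$, and these differ. The standard fix, which I would spell out carefully, is to not insist on an exact answer: run the levels at radius $2^{-n}$ but test meeting-$C$ at radius $2^{-n-1}$, discard a ball if its $2^{-n-1}$-core provably misses $C$, and otherwise emit a special point provably within $2^{-n}$ of $C$. Any ball actually containing a point of $C$ survives, so density at every level (up to a fixed constant factor) is preserved, and every emitted point is within the target distance of $C$; this is enough. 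A second, entirely routine point is that one should check $C \neq \emptyset$ is needed only to guarantee the sequence is nonempty — if $C = \emptyset$ the lemma is vacuous, and in any case for $C \neq \emptyset$ at least one index survives at each level. Everything else is bookkeeping: the uniformity of the covers from computable compactness, the $\Sigma^0_1$ form of the covering witnesses, and the use of $0'$ to flip these to decidable predicates.
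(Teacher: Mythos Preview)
Your overall strategy---use computable compactness and $0'$ to decide which basic balls meet $C$, then emit witnesses---is sound, but the proof as written has a genuine gap at the end. A ``dense sequence of points'' in $C$ means a sequence of points \emph{of} $C$ whose closure is $C$; your points $p_{n,i}$ are special points of $K$ lying within $2^{-n}$ of $C$, not points of $C$. Saying ``all limit points of the sequence lie in $C$'' is not enough: the closure of your sequence contains all the $p_{n,i}$ themselves, which need not belong to $C$ (think of $K=[0,1]$, $C=\{0\}$, and your sequence being $2^{-1},2^{-2},\ldots$). The standard fix is to iterate: once you have, with oracle $0'$, the ability to decide for closed basic balls whether they meet $C$, refine a surviving ball at level $n$ by a surviving sub-ball at level $n+1$, and so on, producing a fast Cauchy sequence of centres whose limit is a $0'$-computable point of $C$. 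Doing this for every surviving ball at every level gives the required $0'$-computable dense sequence in $C$. Alternatively, you could observe that your argument already shows the relation ``$B^c$ meets $C$'' is $0'$-decidable, which by the equivalence you cite from the preliminaries (c.e.\ closed $\Leftrightarrow$ computable dense sequence) gives the conclusion; but then you should say that, rather than presenting your $p_{n,i}$ as the sequence.

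For comparison, the paper's proof is shorter and avoids the open/closed-ball fudging entirely: it fixes a computable surjection $f\colon 2^\omega\to K$ (available for any computably compact $K$), observes that $f^{-1}(C)$ is a $\Pi^0_1$ class in $2^\omega$, and uses the standard fact that $0'$ decides extendibility of finite strings to paths in a $\Pi^0_1$ class. The extendible strings give a $0'$-computable dense set in $f^{-1}(C)$, and its $f$-image is dense in $C$. Your direct approach would work too once patched, and has the mild advantage of not invoking the surjection result; the paper's route trades that for a cleaner argument with no radius bookkeeping.
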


\begin{proof}  Using \cite[Prop.~4.1]{BBP},  fix a computable surjective function $f: 2^{\omega} \rightarrow K$. Since $f$ is effectively continuous by Lemma~\ref{le: computable map}, the pre-image of the effectively open complement of $C$ is effectively open  
in $2^{\omega}$. Thus,  $f^{-1}(C)$  is a $\Pi^0_1$ class  in $2^{\omega}$. Using $0'$ as an oracle we can decide whether a binary string is  extendable to a path in  $f^{-1}(C)$. This yields a $0'$-computable dense set in  $f^{-1}(C)$ and, thus, in $C$.
\end{proof}


\subsection{Computable local compactness for spaces}\label{subs:clcstuff}
 Recall that by a  name $N^x$ of a point $x$  one means the collection of all basic open sets (balls) of a computable Polish $M$ that contain~$x$. In the definition below, a compact $K \subseteq M$ is represented  by
the computable index of a procedure
 enumerating a sequence of finite open $2^{-n}$-covers so that each ball in the cover intersects $K$ non-trivially.
  (In particular, it makes $K$ a computable closed set and a computably compact subspace of the ambient computable Polish space $M$.)
  The index of such a procedure enumerating finite covers of $K$ is sometimes referred to as a \emph{compact name} of $K$.
  
  The usual definition of a locally compact space $M$ says that for every $x \in M$ there is an open set $B$ and a compact set $K$ such that $x \in B \subseteq K$. 
 We adopt the following notion of effective local compactness which is essentially the approach taken in \cite{Kamo}, up to a change of notation.

\begin{definition}\label{def:loc}
A computable Polish space $M$ is \emph{computably  locally compact} if there is an algorithmic procedure that, given (an enumeration of the name $N^x$ of) any point $x$,
outputs a basic open set $B$ and a computable compact set $K \subseteq M$ such that $x \in B \subseteq K$. We call the functional representing the procedure $N^x\rightarrow  K \supseteq B \ni x$ a \emph{computable locally compact structure} on $M$.

\end{definition}

\begin{remark} Other closely related  definitions can be found in \cite{lc1,lc2, lc3}. See \cite{lc1} for   a detailed discussion of the various approaches to effectivizing local compactness  in the literature and the subtle differences between them.  
\end{remark}
\noindent 
 Clearly, for a compact space,  computable \emph{local} compactness    is equivalent to   computable compactness.  Note that $x$  does not have to be a computable point, while the output of the procedure  in the definition has to be a computable index of the procedure representing $K$.
If the functional in the definition above is not computable but is $Y$-computable for some oracle $Y$ (e.g., $Y = \emptyset''$) then we say that $M$ admits a \emph{$Y$-computable locally compact structure}. We will need this relativized version of computable local compactness in the proof of our index set result.

 A useful reformulation of the notion of computable local compactness presented below involves the notion of a computable closed ball.
 If $B = B(\xi, r)$ is an open ball,   by  $B^c$ we denote the corresponding  closed ball $\{x: d(\xi,x) \leq r\}$; note that this  contains the closure $\overline{B}$  but may be strictly larger. Recall that we say that a ball  $B = B(\xi, r)$ is basic if $\xi$ is a special  point and $r$ is a rational (given as a fraction). If $\xi$ and $r$ are merely computable, we say that the ball is computable. 
 
 The following proposition will be   useful. See \cite[Proposition 12]{lc1} for a similar fact in a different framework. Note that the property established   is essentially  \cite[Definition 3]{lc3}.

\begin{proposition}\label{prop:ball}
In the notation of Definition~\ref{def:loc}, 
 we can assume that  $K$ equals  $B^c$  for  a computable open ball~$B$ containing~$x$. In other words, given $x$ we can produce a computable index of a computable point $\xi$ and of a computable real $r$ so that $B(\xi, r) \ni x$ and $B^c(\xi, r)$ is computably compact (and is represented by  its compact name). Furthermore, we can additionally assume that $B^c(\xi, r) = \overline{B(\xi,r)}$, where the latter is the closure of $B(\xi, r)$.

\end{proposition}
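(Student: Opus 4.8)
The plan is to bootstrap from the functional of Definition~\ref{def:loc}, which on input a name $N^x$ outputs a \emph{basic} open ball $B_0 \ni x$ together with a computably compact $K \supseteq B_0$, and to upgrade the pair $(B_0, K)$ to a pair $(B, B^c)$ where $B$ is a computable ball with $\overline{B} = B^c$ still contained in (a computably compact subset of) the original $K$. The first step is to observe that, since $K$ is computably compact and the distance function $y \mapsto d(\xi_0, y)$ is computable on $K$ (where $\xi_0$ is the centre of $B_0$), we may search for a suitable radius. Concretely, let $B_0 = B(\xi_0, q_0)$ with $\xi_0$ a special point and $q_0$ rational; since $B_0 \subseteq K$ and $K$ is compact, $\overline{B_0}$ is compact, but the ``shell'' $S_r = \{y \in K : d(\xi_0, y) = r\}$ may be nonempty for the rational value $r = q_0$, which is exactly the obstruction to having $\overline{B(\xi_0,q_0)} = B^c(\xi_0,q_0)$. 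The fix is to perturb the radius: I would pick a computable real $r < q_0$ close to $q_0$ such that the shell $S_r$ is empty, equivalently such that $d(\xi_0, \cdot)$ does not attain the value $r$ on $K$; this guarantees $\overline{B(\xi_0,r)} \cap K = B^c(\xi_0,r) \cap K$, and since $B(\xi_0,r) \subseteq B_0 \subseteq K$ we get $\overline{B(\xi_0,r)} = B^c(\xi_0,r)$ outright.

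The key technical point is that such an $r$ can be found computably and uniformly. Using computable compactness of $K$, the set of rationals $q$ for which the shell $S_q$ is \emph{nonempty} is c.e.\ (one can enumerate a witnessing point to within any precision). However, I do not want to force $r$ to avoid all such $q$; instead I use the standard trick that a computable function on a computably compact space attains only ``few'' values in a controlled way. Precisely: consider the computable function $f : K \to \R$, $f(y) = d(\xi_0, y)$. Its range is a computably compact subset of $[0, q_0']$ for some rational bound $q_0'$, hence its complement in an interval is c.e.\ open; I can therefore effectively find a computable real $r$ with, say, $q_0 - 2^{-k} < r < q_0$ (for a fixed small $k$ chosen so that $x \in B(\xi_0, r)$ — this needs $d(\xi_0,x) < q_0 - 2^{-k}$, which we can ensure by first shrinking $q_0$ using the name $N^x$, which tells us a basic ball around $x$ strictly inside $B_0$) that lies in the complement of $\range(f)$. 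A clean way to produce $r$: take a sequence of rational intervals shrinking toward a point of the open set $(q_0 - 2^{-k}, q_0) \setminus \range(f)$, which is nonempty because $\range(f)$ is closed and nowhere dense enough — actually nonemptiness follows since $\range(f)$ is a compact subset of $\R$ and cannot contain the whole interval unless $f$ is constant there, and in any case $(q_0-2^{-k}, q_0)\setminus \range(f)$ being a nonempty c.e.\ open subset of $\R$ (nonempty because a compact set has empty interior complement... ) — more safely, since $\range(f)$ is compact, $[0,\infty)\setminus\range(f)$ is open and its connected components have rational-approximable endpoints, so I can computably locate an $r$ just below $q_0$ with $r \notin \range(f)$, because there exist points of the complement arbitrarily close to $q_0$ from below unless $f$ takes all values in some $[q_0-\epsilon, q_0]$, in which case I instead perturb to some $r$ strictly below that whole sub-interval; either way the search terminates. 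Then set $B := B(\xi_0, r)$ and output $B^c(\xi_0, r)$ as the desired $K$.

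It remains to verify that $B^c(\xi_0, r)$ is computably compact with an algorithmically produced compact name, and that $\overline{B(\xi_0,r)} = B^c(\xi_0,r)$. For computable compactness: $B^c(\xi_0,r) \cap K = B^c(\xi_0, r)$ since $B(\xi_0,r) \subseteq B_0 \subseteq K$ and $S_r = \emptyset$ forces the closed ball to lie inside $K$ as well; and a closed ball that is a computable closed subset of a computably compact space is itself computably compact uniformly (this is the fact quoted just before Lemma~\ref{0jj}), so I produce its compact name from the compact name of $K$ together with the computable data $(\xi_0, r)$ and the c.e.\ witness that $S_r = \emptyset$ (used to certify co-c.e.-ness of the ball relative to $K$). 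The equality $\overline{B(\xi_0,r)} = B^c(\xi_0,r)$ is immediate once we know $S_r = \emptyset$: in general $\overline{B(\xi,r)} \subseteq B^c(\xi,r)$, and any $y$ with $d(\xi_0,y) = r$ would be a boundary point, but emptiness of the shell means $B^c(\xi_0,r) = B(\xi_0,r)$ on $K$, which is open, and on a metric space a set that is simultaneously a closed ball and open equals its own closure. Finally $x \in B(\xi_0,r)$ by the choice of $k$. All outputs — the index of $\xi_0$ (a special point, hence trivially computable), the index of $r$ as a computable real, and the compact name of $B^c(\xi_0,r)$ — are produced uniformly from $N^x$ and the given computable locally compact structure.

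\textbf{Main obstacle.} The delicate point is the uniform, effective choice of the radius $r$ avoiding the shell $S_r$: one must argue that $(q_0 - 2^{-k}, q_0) \setminus \range\!\big(d(\xi_0,\cdot)|_K\big)$ (or a nearby interval) is a nonempty c.e.\ open set and extract a computable real from it, and one must simultaneously keep $x$ inside the shrunken ball — this is where the name $N^x$ and a preliminary shrinking of $q_0$ are needed. Everything else (closed ball inside a computably compact set is computably compact; closed-and-open implies equals its closure) is routine and was flagged as available earlier in the excerpt.
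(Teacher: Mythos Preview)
Your proposal has a genuine gap at the step you yourself flag as the ``main obstacle''. You want a computable radius $r$ with empty shell $S_r=\{y\in K: d(\xi_0,y)=r\}$, arguing that $(q_0-2^{-k},q_0)\setminus\range(f)$ is nonempty because $\range(f)$ ``cannot contain the whole interval unless $f$ is constant there''. That implication is false: if $K$ is, say, a closed Euclidean ball centred at $\xi_0$, then $f(y)=d(\xi_0,y)$ has range equal to an interval $[0,R]$, so $S_r\neq\emptyset$ for \emph{every} $r\in(0,R)$. Your fallback (``perturb to some $r$ strictly below that whole sub-interval'') does not help either, since the interval may extend all the way down to~$0$. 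In short, requiring the shell to be empty is far too strong; it can fail for every candidate radius, so the search need not terminate.

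The paper takes a different route that avoids this obstruction entirely. Rather than keeping the original centre $\xi_0$ and perturbing the radius, it invokes a pre-built system of covers $(C_n)$ of $K$ from \cite[Prop.~3.30]{EffedSurvey} whose balls have \emph{computable} (not special) centres and for which intersection is decidable in a strong sense: one may freely replace any $B_i$ by $\overline{B_i}$ or $B_i^c$ without changing whether a finite intersection is nonempty. From this decidability property alone it follows (Claim~\ref{claim:odin}) that $\overline{B}=B^c$ for every ball $B$ in the system, with no need for empty shells, and (Claim~\ref{claim:dva}) that each such $B^c$ is uniformly computably compact by restricting the covers. One then simply searches the system for a ball $D\ni x$ formally contained in the original~$B_0$. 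So the work is front-loaded into the construction of the cover system, and both the centre and the radius of the output ball come from that system.

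If you wish to repair your approach while keeping the centre fixed at $\xi_0$, the right condition is not $S_r=\emptyset$ but the weaker $\overline{B(\xi_0,r)}=B^c(\xi_0,r)$; you would then need to argue that the set of ``bad'' radii is small enough (and its complement effectively accessible) to extract a computable good~$r$. That is a different argument, and not the one you sketched.
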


\begin{proof} 
 Proposition 3.30 of
\cite{EffedSurvey} establishes that we can uniformly produce a system  of  finite $2^{-n}$-covers $C_n$ of a computably compact space $K$ that consist of  computable open balls
that possess the following strong property. Each ball in $C_n$ is represented by the pair of indices of its computable centre and its computable radius.
Given any finite collection of (the parameters describing) balls $B_0, \ldots, B_k \in \bigcup_n C_n$, we can uniformly decide
whether 
$$\bigcap_{i \leq k} B_i  \neq \emptyset.$$
Furthermore (as explained in \cite[Remark 3.18]{EffedSurvey}), when we decide intersection, we can replace 
some (or all) balls in the sequence $B_0, \ldots, B_k$ with the respective closures or the  closed balls, e.g., 
$$B_0 \cap \overline{B_1} \cap B^c_2 \cap B_3 \neq \emptyset$$
iff 
$$B_0 \cap B_1 \cap B_2 \cap B_3 \neq \emptyset,$$
where $\overline{B_i}$ are  the closure of $B_i$ and $ B_i^c$ is the basic closed ball with the same parameters as $B_i$.
Additionally, we can further assume that every $B(a, r) \in C_{n+1}$ is formally included in some $B(b, q) \in C_{n}$, meaning that 
$d(a,b)+ r < q$.

\

The following further convenient property of such covers was not mentioned in \cite{EffedSurvey}.
\begin{claim}\label{claim:odin}
In the notation above, for any computable ball $B \in \bigcup_n C_n$ we have 
$\overline{B} = B^c.$
\end{claim}

\begin{proof} Suppose $\alpha \in B^c \setminus \overline{B}$.
The set  $U = K \setminus \overline{B} $ is open, and each  $\alpha \in U\cap B^c$ must be contained in $U$ together with some  $B' \in C_n$, where $n$ is sufficiently large.
But then we have $B' \cap  B^c \neq \emptyset$ while $B' \cap  \overline{B} = \emptyset$, contradicting the properties of $\bigcup_n C_n$ described before the claim.
\end{proof}

Recall that every ball in $\bigcup_n C_n$ is represented by a pair of indices for its computable centre and its radius.

\begin{claim}\label{claim:dva}
In the notation above, for every $B \in \bigcup_n C_n$ the closed ball $B^c = \overline{B}$ is a computably compact subspace of $K$, and this is uniform in the parameters describing $B$.
Indeed, this is witnessed by finite $2^{-n}$-covers $C_n' \subseteq C_n$ that can be found uniformly in $n$.
\end{claim}

\begin{proof}
We can list a dense sequence of uniformly computable points in $B$ which turns $B$ into a computable Polish space. Since intersection is decidable in  $ \bigcup_n C_n$
in the strong sense described earlier, we can uniformly effectively restrict each  $C_n$ to its finite subset $C'_n$ consisting of  $B' \in C_n$ having the property
$\overline{B}\cap B' \neq \emptyset$.
The resulting uniformly computable system $(C'_n)_{n \in \omega}$ of $2^{-n}$-covers of $B^c$ witnesses that $B^c$ is itself a computably compact subspace of $K$.
\end{proof}

We now return to the proof of the proposition. Let $x \in B \subseteq K$ be as in Definition~\ref{def:loc}, where $B = B(a,q) = \{y: d(a,y) < q\}$.
Fix a system of covers $(C_n)$ of $K$ with the strong properties described above, and (using $x$) search for the first found open ball 
$D = B(\xi, r) \in \bigcup_n C_n$ such that $$x \in D \subseteq B$$ as witnessed by  $d( \xi, x) < r$ and $d(a,\xi) +r < q$. (Note the latter implies $D^c \subseteq B$.)
This search is uniformly effective in  the name $N^x$ of $x$ and will eventually terminate.

 The closure of $D$ in $K$ is equal to the closure of $D$ in $M$ because both sets are contained in $B \subseteq K \cap M$. By Claim~\ref{claim:odin},
$D^c = \overline{D}$. This makes $D^c$ a computable subspace of $M$.   Using Claim~\ref{claim:dva}, restrict $(C_n)_{n \in \omega}$ to a uniformly computable system $(C'_n)_{n \in \omega}$ of covers of $D^c$, witnessing that
$D^c$ is computably compact.
Since we have
$$x \in D = B(\xi, r)  \subseteq \overline{B(\xi, r)}  = B^c(\xi, r) \subseteq B(a,q),$$
 the computably compact ball $D^c = B^c(\xi, r) = \overline{B(\xi, r)}$ satisfies the required properties.\end{proof}


 \begin{definition}\label{def:compgrdef1} We say that a locally compact Polish group is
 \emph{computably locally compact} if  its domain is computably locally compact and  the group operations $\cdot$ and $^{-1}$ are computable  upon this domain.
 (The notion of a computably compact group is defined similarly.)
\end{definition}

In the discrete case, Definition~\ref{def:compgrdef} and Definition~\ref{def:compgrdef1} are actually equivalent; this follows quite easily from ~Fact~\ref{hgotGDDD112330OOFggrs}.
It is known that a profinite group is ``recursive profinite''  \cite{MetNer79, LaRo1,LaRothesis,Smith1} if, and only if, it is computably compact~\cite{EffedSurvey}. 
Further effective properties of locally compact groups have recently been   investigated  in more detail in~\cite{CoToGr,separ}.   For t.d.l.c.~groups,  the robust   notion of computable presentability~\cite{tdlc}  
is in fact equivalent to Definition~\ref{def:compgrdef1}. In Subsection~\ref{subsec:tdlc}, 
we will give further details on the result of~\cite{separ} establishing this, which is of importance in the present work. 
Given the evidence  that Definition~\ref{def:compgrdef1} is    natural and robust, we    choose it as the basic definition of computability for a locally compact Polish group.



\section{Computability of the 1-point compactification} \label{subs: 1-point compactification}
 The main result of the section  is Theorem~\ref{thm:1po}, an  algorithmic  version of taking the 1-point compactification of a   locally compact space.
  The presentation constructed in Theorem~\ref{thm:1po} will be used in Theorem~\ref{main:thm} and Theorem~\ref{thm:gr}.

\subsection{Computable $\sigma$-compactness, a strong form.}
Let $M$ be a Polish space with a fixed metric $d$.
\begin{definition}\label{def:uninbhd}
For $\epsilon>0$, we say that a set $U$ is a uniform $\epsilon$-neighbourhood ($\epsilon$-nbhd) of a set $V$ if
$$U \supseteq V(\epsilon) = \{x \in M: \exists y \in V \, d(y, x) < \epsilon \}.$$
In other words, for each point of $V$  together the whole  open $\epsilon$-ball around the point is contained in $U$.
\end{definition}
There is a danger of confusing computability-theoretic uniformity with topological uniformity (as defined above). In what follows next, we shall avoid saying `$U$ is a uniform $\epsilon$-neighbourhood of $V$' and instead we will use the associated 
notation $`U\supseteq V(\epsilon)$'. 

\

The technical lemma below appears to be  new  as stated, as  it is  stronger than   similar results in the literature; e.g., \cite{lc1}.
(Compare this lemma with  \cite[Proposition 8]{lc1} that also produces a nice list of compact neighbourhood  of a given space. Also, note that the notion of computable $
\sigma$-compactness as defined in  \cite[Definition 14]{lc1} appears to be weaker than the technical property established in the lemma below; see, e.g., \cite[Example 17]{lc1}.) 

\begin{lemma}[Strong computable $\sigma$-compactness]\label{lemma:sigma} Suppose $M$ is a computably locally compact 
Polish space.  There exists a nested sequence
$(K_n)_{n \in \omega}$ of uniformly computably compact sets $K_n \subseteq M$ and a uniformly computable sequence of positive reals $c_i \leq 2^{-i}$ with the following properties:
 \begin{enumerate}
 \item $K_{n+1} \supseteq K_n(c_n)$ for all $n$. (Recall Def.~\ref{def:uninbhd}.)   
 \item $M = \bigcup_{n \in \omega} K_n$.
 
 \item Each $K_n$  is represented as a finite union of computable closed balls.
 
 \item Given $x \in M$ we can uniformly effectively (in $x$) calculate some $n$ such that $x \in K_n$, and indeed  $x$ is inside one of the open balls whose respective closed balls make up $K_n$.
 
 \end{enumerate}

\end{lemma}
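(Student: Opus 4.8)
The plan is to build the sequence $(K_n)$ and the reals $(c_n)$ recursively, starting from a countable dense set of special points and using the computable locally compact structure on $M$ together with Proposition~\ref{prop:ball}. At stage $n$ we will already have a finite union of computable closed balls $K_n = \bigcup_{j} B^c(\xi_j, r_j)$, computably compact and represented by a compact name; we then want to produce $K_{n+1} \supseteq K_n(c_n)$ for a suitable small positive rational $c_n \le 2^{-n}$, add in a fresh special point to guarantee exhaustion, and keep everything uniformly computable.

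First I would address the successor step: given the computably compact set $K_n$ (a finite union of computable closed balls), I want a uniform compact $c_n$-neighbourhood of it. The idea is to apply the computable locally compact structure to each point of $K_n$ — but since $K_n$ has infinitely many points, instead apply it to the finitely many special points that are $\delta$-dense in $K_n$ for a very fine $\delta$ (extracted from a finite subcover in the compact name of $K_n$), obtaining for each such special point $x$ a basic open $B_x \ni x$ and a computably compact $L_x \supseteq B_x$. By computable compactness of $K_n$ we can search for, and find, a finite list of such special points whose associated basic open balls $B_x$ cover $K_n$; then $\bigcup_x L_x$ is a computably compact set (finite union of computably compact sets) containing an open neighbourhood of $K_n$. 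By compactness of $K_n$ and effective continuity of the distance function, we can then compute a positive rational $c_n \le 2^{-n}$ with $K_n(c_n) \subseteq \bigcup_x B_x \subseteq \bigcup_x L_x$: concretely, $\inf_{y \in K_n} d(y, M \setminus \bigcup_x B_x)$ is a positive computable real (infimum of a computable function over a computably compact set), so round it down. Finally, re-present $\bigcup_x L_x$ (plus, to be safe, $K_n$ itself) as a finite union of computable \emph{closed balls}: each $L_x$ was obtained via Proposition~\ref{prop:ball} as (or can be refined into) a finite union of computable closed balls, so this is already in the required form. To ensure nesting, take $K_{n+1}$ to be the union of $K_n$ with this new finite union of closed balls. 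To ensure property~(2), enumerate the special points $(x_i)$ and at stage $n$ also throw in a computably compact closed-ball neighbourhood of $x_n$ (again via the locally compact structure and Proposition~\ref{prop:ball}); since the special points are dense and every point of $M$ has a compact neighbourhood, and the $K_n$ are nested and absorb neighbourhoods, one checks $\bigcup_n K_n = M$. Property~(4) is immediate from the construction: given $x$, run the locally compact structure to get $B \ni x$ inside a compact set; wait until some $x_i \in B$ with $i$ small enough, or more directly, since the open balls making up the $K_n$ form an increasing open cover of $M$, search for the first $n$ and the first constituent open ball of $K_n$ containing $x$ — this halts because $\bigcup_n K_n = M$ and each $x$ lands in the interior of some $K_n$ by construction.

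The main obstacle I anticipate is keeping the representation honest at every step: each $K_n$ must simultaneously be (i) a finite union of computable closed balls with explicitly given computable centres and radii, (ii) computably compact \emph{with a uniformly computable compact name}, and (iii) such that the uniform $c_n$-neighbourhood is genuinely swallowed by $K_{n+1}$. The delicate point is (iii) combined with the closed-versus-closure subtlety flagged before Claim~\ref{claim:odin}: a closed ball $B^c$ can be strictly larger than $\overline{B}$, so when I assert $K_n(c_n) \subseteq \bigcup_x B_x$ I must work with the open balls $B_x$, not their closed counterparts, yet I need the \emph{closed} balls to get computable compactness of $K_{n+1}$; Proposition~\ref{prop:ball} (specifically the clause $B^c(\xi,r) = \overline{B(\xi,r)}$, available because the $L_x$ arise from the strongly-good cover systems $C_n$ of the ambient space) is exactly what reconciles these, so I would be careful to invoke that clause. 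A secondary technical nuisance is verifying that a finite union of computably compact sets, each given by a compact name, is again computably compact with a uniformly computable compact name — this is routine (take unions of finite covers, discard balls not meeting the union), but it is the glue that must be checked. Once these bookkeeping issues are pinned down, the four listed properties all follow by direct inspection of the recursion.
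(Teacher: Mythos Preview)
Your overall plan is close to the paper's, but there is a real gap in the argument for property~(2) (and hence~(4)): you only ever feed \emph{special points} into the locally compact structure, and the handwave ``the $K_n$ are nested and absorb neighbourhoods'' does not close it. The union $\bigcup_n K_n$ is indeed open (each $K_n$ lies in the interior of $K_{n+1}$), and it contains every special point, but an open set containing a dense set need not be all of $M$. The thickening only buys you $\sum_n c_n \le 2$ of extra room in total, and the balls the functional hands you around the $x_n$ can have radii tending to $0$ near a target point $\alpha$, so nothing forces $\alpha\in\bigcup_n K_n$. The paper's key idea is different: rather than applying $\Psi$ to special points, one enumerates \emph{all} finite fast-Cauchy prefixes $\bar x=(x_0,\ldots,x_k)$ of special points on which $\Psi^{\bar x}$ halts with use at most $k$, and collects the resulting balls into a sequence $(D_i)_i$. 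Since $\Psi$, applied to any name of any $\alpha\in M$, must halt after reading some finite prefix $\bar x$, and its output open ball then contains every point whose name extends $\bar x$ (in particular~$\alpha$), the open balls of the $D_i$ genuinely cover $M$. This single observation gives (2) and (4) for free; the recursive step (cover $K_n$ by finitely many of the $D_i$, throw in $D_{n+1}$) then proceeds essentially as you outlined.

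There is also a smaller slip in your computation of $c_n$: the function $y\mapsto d\bigl(y, M\setminus\bigcup_x B_x\bigr)$ is \emph{not} a computable function of $y$ in general, because the complement of a finite union of basic open balls is merely $\Pi^0_1$-closed, not computably closed, so the ``infimum of a computable function over a computably compact set'' justification fails as stated. The paper sidesteps this entirely by working syntactically: it takes a finite cover of $K_n$ by balls each \emph{formally} included (i.e.\ $d(a,b)+r<q$) in one of the covering balls $L_j=B(b,q)$, and lets $c_n$ be a positive rational below the minimum slack $q-d(a,b)-r$. A direct triangle-inequality check then yields $K_n(c_n)\subseteq \bigcup_j L_j$, with no semicomputability issues at all.
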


\begin{proof}
Suppose the computable local compactness of $M$ is witnessed by an operator $\Psi$. 
We can replace $\Psi$ by a functional $\Psi$ that turns a Cauchy name $(x_i)$ of a point to an enumeration of its name $N^x$ and then uses $\Psi$ to produce 
a compact name of the compact neighbourhood of the point. 
List all finite sequences of the form $\bar{x} = \langle x_0, \ldots, x_k \rangle$, where $x_i$ are special in $M$ and $d(x_i, x_{i+1})< 2^{-i}$, and calculate
the uniform sequence of those $\Psi^{\bar{x}}$ which halt with use at most  the length of $\bar{x}$.
Let $K_{\bar{x}}$ denote  (the computable index of) the computably compact set that is output by the procedure on input $\bar{x}$ if it halts.
Observe that any point $\alpha$ of $M$ (special or not) will be contained in some such  $K_{\bar{x}}$ that will be listed in the sequence.
Further, by Proposition~\ref{prop:ball} we may assume $K_{\bar{x}}$ is a basic computable closed ball $B^c$ with a uniformly computable radius; in the enumeration, 
it will be represented by its computable index.
Let  $(D_i)_{ i \in \omega}$ be the resulting sequence of (computable indices of) uniformly computably compact closed balls.

To make sure that all conditions of the lemma are satisfied, we need to modify the sequence as follows.
Set $K_0 = D_0$.
If $K_n$ has already been defined, uniformly fix a finite $2^{-n}$-cover $L_1, \ldots, L_k$ of $K_n$
so that the respective closed  balls $L^c_i$ are (uniformly) computably compact,
 and
set $$K_{n+1} =  D_{n+1} \cup \bigcup_{i} L^c_i.$$
(These balls are the first found balls in the sequence $(D_i)_{ i \in \omega}$; they must exist so we just search for the first found ones.)
It should be clear that conditions $(2)$, $(3)$, and $(4)$ of the lemma are satisfied by the sequence $(K_i)$ of uniformly computably compact subspaces.

It remains to define the parameters $c_n$ required in $(1)$.
In the notation above, use \cite[Remark 3.18]{EffedSurvey} to uniformly produce
a  finite cover of $K_n$ that formally refines $L_1, \ldots, L_k$.
This means that for each ball in this cover $B(a,r)$ there is some ball $L_j = B(b, q)$ containing it so
that $d(a,b) +r < q$. For each such $B(a,r)$ effectively choose a positive rational $l = l_j$ so that
$d(a,b) +r +l <q$ still. It follows from the triangle inequality that for any 
$z \in K_{n}$ we have that the $l$-ball around $z$ is contained in $L_j = B(b, q)$.
(If $w \in B(z, l)$ then $d(a,w) \leq d(a,b) + d(b,z) + d(z, w) \leq d(a,b) +r +l <q$.)
Choose $c_{n}$ to be  smaller than the least among all these $l_j$, $j =1, \ldots, k$, and also is at least twice smaller than $c_{n-1}$ (to make sure the sequence $(c_n)$ satisfies $c_n \leq 2^{-n}$).
Then we have that  
 $K_n(c_n)$ (see Def.~\ref{def:uninbhd}) 
 has to be included into $\bigcup_{i} L^c_i$, and thus
$$K_n (c_n) \subseteq  D_{n+1} \cup \bigcup_{i} L^c_i = K_{n+1}.$$
This gives $(1)$. \end{proof}

The next proposition, which can be readily deduced from the above lemma, was already mentioned in the introduction.  It appears to be new.  The proposition will not be directly used in the rest of the paper, however, it seems to be sufficiently important for the framework of computable Polish spaces.
\begin{proposition}\label{cor:HB}
Every computably locally compact Polish space admits a computable regular (i.e.~Heine-Borel) metric. Furthermore, $\delta$ and $d$ are effectively compatible, i.e.,
the map $Id: (M, d) \rightarrow (M, \delta)$ is a computable homeomorphism\footnote{In particular, the space remains computably locally compact under $\delta$.}.
\end{proposition}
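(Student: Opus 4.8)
The plan is to build the Heine--Borel metric $\delta$ directly from the strong $\sigma$-compactness data $(K_n)_{n\in\omega}$ and $(c_n)_{n\in\omega}$ produced by Lemma~\ref{lemma:sigma}. The idea is standard in the classical setting: one defines a new metric that pushes points ``to infinity'' as they escape every $K_n$, so that closed $\delta$-balls get trapped inside some $K_n$ and hence are compact; the work here is to do this \emph{effectively} and so that the identity map is a computable homeomorphism in both directions.

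First I would fix a uniformly computable ``level function'' $r\colon M\to[0,\infty)$ that is effectively continuous, roughly measures which shell $K_{n+1}\setminus K_n$ a point lies in, and is exhausting in the sense that $r(x)\to\infty$ as $x$ leaves every $K_n$. A clean way: let $r$ interpolate between integer values using the nested uniform neighbourhoods, e.g.\ build $r$ from the computable functions $x\mapsto d(x,K_n)$ (which are computable since each $K_n$ is computably compact, hence a computable closed set), using the gaps $c_n$ from clause (1) of Lemma~\ref{lemma:sigma} to guarantee that $r$ is $1$-Lipschitz-ish and effectively continuous. The key quantitative point is that $K_n(c_n)\subseteq K_{n+1}$ forces $d(x,K_n)\ge c_n$ whenever $x\notin K_{n+1}$, which lets one bound $r$ below on the complement of each $K_{n+1}$ and thus ensures properness: $\{x : r(x)\le N\}$ is contained in some $K_{m(N)}$, with $m$ computable in $N$. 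Then set
\[
\delta(x,y) \;=\; \min\bigl(1,\ d(x,y)\bigr) \;+\; |r(x)-r(y)|,
\]
or a suitable variant so that $\delta$ is still a complete metric compatible with the topology. One checks $\delta$ is a computable metric: the special points are the same, and $\delta(x_i,x_j)$ is computable uniformly because both $d$ (truncated) and $r$ are, and $r$ of a special point is a computable real uniformly in the index.

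Next I would verify the three things the statement asks for. (i) \emph{Heine--Borel for $\delta$}: a closed $\delta$-ball of radius $N$ around a point forces $|r(x)-r(y)|\le N$, hence $r(y)$ is bounded, hence $y\in K_{m}$ for a fixed $m$; a $\delta$-closed subset of $K_m$ is $d$-closed and bounded inside a computably compact set, hence compact (indeed computably compact, uniformly, using the covers of $K_m$ refined to the ball). (ii) \emph{$\mathrm{Id}\colon(M,d)\to(M,\delta)$ computable}: $\delta$ is built from $d$-computable ingredients, so from a fast Cauchy name of $x$ in $d$ one computes $\delta(x,x_i)$ to any precision, i.e.\ one enumerates the $\delta$-name of $x$. (iii) \emph{$\mathrm{Id}\colon(M,\delta)\to(M,d)$ computable}: given a $\delta$-name of $x$, we first compute some $N$ with $\delta(x,\ast)<N$ for a fixed basepoint, locating $x$ inside a computably compact $K_m$ (clause (4) of Lemma~\ref{lemma:sigma} and the boundedness of $r$); inside the computably compact space $(K_m,d)$, the metric $d$ is computable and the $\delta$-topology agrees with the $d$-topology on the compact set, so a $\delta$-name refines to a $d$-name, uniformly. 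Footnote-wise, this last point also shows $(M,\delta)$ is computably locally compact, since computable homeomorphisms transport a computable locally compact structure.

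The main obstacle I expect is the \emph{effective} choice and control of the level function $r$: one must ensure simultaneously that $r$ is effectively continuous (so $\delta$ is a genuine computable metric and $\mathrm{Id}\colon(M,d)\to(M,\delta)$ is computable), that $r$ is proper/exhausting with a \emph{computable} modulus $m(N)$ (so closed $\delta$-balls are located in a specific computably compact $K_m$), and that $r$ does not distort the topology (so $\delta$ stays compatible and complete). Getting a single formula that threads all three is the fiddly part; the gap estimates $K_n(c_n)\subseteq K_{n+1}$ with $c_n\le 2^{-n}$ from Lemma~\ref{lemma:sigma}, together with the fact that each $K_n$ is a finite union of computable closed balls (clause (3)), are exactly what make a clean such choice possible, e.g.\ by taking $r$ to be a computable ``barrier'' function that increases by at least a controlled amount across each shell $K_{n+1}\setminus K_n$.
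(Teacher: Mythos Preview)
Your approach is essentially the paper's: the authors take as level function the explicit $f(x)=\sum_{n}\min\bigl(d(x,K_n)/c_n,\,1\bigr)$ and set $\delta(x,y)=d(x,y)+|f(x)-f(y)|$, then verify properness and effective compatibility along exactly the lines you sketch. The only cosmetic differences are that they do not truncate $d$ (which makes your step~(iii) immediate, since $\delta\ge d$ means a fast $\delta$-Cauchy name is already a fast $d$-Cauchy name, avoiding the detour through~$K_m$) and that they commit to the explicit formula for the level function rather than leaving it implicit.
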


\begin{proof} See Appendix~\ref{A}.
\end{proof}


\subsection{Effective 1-point compactification} 
Theorem~\ref{thm:1po} will be a key step in the definition of the $\Pi^0_1$-presentation of the Chabauty space of a computably locally compact group.

\begin{theorem}\label{thm:1po}  Given a computably locally compact Polish space $M$ that is not already compact, we can uniformly effectively produce a computable homeomorphic embedding
$f$ of $M$ into its computably compact 1-point compactification $M^* \cong M \cup \{\infty\}$.
Furthermore, $f^{-1}$ is computable (everywhere except for $\infty$), and this is also uniform.
\end{theorem}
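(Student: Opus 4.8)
\textbf{Proof plan for Theorem~\ref{thm:1po}.}

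The plan is to build the compact metric on $M^* = M \sqcup \{\infty\}$ directly from the strong $\sigma$-compact exhaustion $(K_n)_{n\in\omega}$ produced by Lemma~\ref{lemma:sigma}, together with the parameters $c_n \le 2^{-n}$. The idea is that a point of $M$ should be ``close to $\infty$'' exactly when it lies outside all of the early $K_n$'s. Concretely, I would first define a function $\rho\colon M \to (0,1]$ measuring depth in the exhaustion, e.g. something like $\rho(x) = \inf\{\, 2^{-n} : x \in K_n \,\}$ (using $\rho(x)=1$ if $x\notin K_0$, and noting $\rho(x)=0$ is impossible since $\bigcup_n K_n = M$). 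By item~(4) of Lemma~\ref{lemma:sigma}, $\rho$ is computable, and by the nestedness and the uniform-neighbourhood condition (1), $\rho$ is (effectively) continuous — in fact the $c_n$'s give a modulus, since if $x\in K_n$ then its whole $c_n$-ball lies in $K_{n+1}$. Then set, for $x,y\in M$,
\[
 d^*(x,y) = \min\bigl(d'(x,y),\ \rho(x) + \rho(y)\bigr), \qquad d^*(x,\infty) = \rho(x),
\]
where $d' = \min(d,1)$ is the truncated original metric (or a suitably rescaled variant). One checks $d^*$ is a metric: the triangle inequality splits into cases according to which term achieves each minimum, and the case $d^*(x,\infty)$, $d^*(\infty,y)$ versus $d^*(x,y)$ reduces to $d^*(x,y)\le \rho(x)+\rho(y) = d^*(x,\infty)+d^*(\infty,y)$, which holds by the very definition of the min. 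The special points for $M^*$ are the special points of $M$ together with $\infty$ (which is a computable point, at distance $\rho(x_i)$, a computable real, from each special $x_i$), and $d^*$ between special points is computable because $\rho$ is computable on special points.

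Next I would verify the three required properties. \emph{(i) $f=\mathrm{id}_M$ is a computable homeomorphic embedding with computable inverse off $\infty$.} On any fixed $K_n$ we have $\rho(x)\ge 2^{-n}$, so on $K_n$ the metrics $d^*$ and $d'$ are bi-Lipschitz-equivalent in a way that is effective and uniform in $n$; since every point of $M$ lies in some $K_n$ computably (item (4)), names translate effectively in both directions, giving that $f$ and $f^{-1}\uh M$ are computable, uniformly. \emph{(ii) $M^*$ is compact.} $d^*$-convergence to $\infty$ is exactly $\rho\to 0$, i.e.\ eventually leaving every $K_n$; any sequence in $M^*$ either has a subsequence inside a fixed $K_n$ (which is compact, hence has a convergent sub-subsequence in $M$, and the limit agrees in $d^*$ by point~(i)) or else $\rho\to 0$ along a subsequence, giving convergence to $\infty$. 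One also checks $d^*$ is complete and that the topology it induces is the standard one-point-compactification topology (open nbhds of $\infty$ are complements of sets contained in some $K_n$, using that each $K_n$ is closed and each $K_n(c_n)\subseteq K_{n+1}$ so the interiors exhaust $M$). \emph{(iii) $M^*$ is computably compact.} For each $m$, take a finite $2^{-m}$-cover of $K_{m}$ by basic $d'$-balls (available since $K_m$ is computably compact, uniformly in $m$), thicken it slightly so the balls are basic in $(M^*,d^*)$, and add the single $d^*$-ball $B_{d^*}(\infty, 2^{-m})$; for $n$ chosen with $2^{-n}\le 2^{-m}$ we have $M^* \setminus B_{d^*}(\infty,2^{-m}) \subseteq K_n$, so finitely many of these balls (those meeting $K_n$, which we may detect since $K_n$ is computably compact) cover $M^*$, and the construction is uniform in $m$. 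This yields a uniform family of finite $2^{-m}$-covers by basic balls, which is computable compactness by Definition~\ref{def:effcomp}.

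The main obstacle I anticipate is not any single step but getting the metric $d^*$ exactly right so that \emph{all} of continuity of $\rho$, the triangle inequality, completeness, the correct (one-point-compactification) topology, \emph{and} uniform effectivity hold simultaneously; in particular one must be careful that the identification of the topology near $\infty$ really matches the standard one, which is where the precise form of condition~(1) of Lemma~\ref{lemma:sigma} ($K_{n+1}\supseteq K_n(c_n)$ with $c_n$ known and positive) is essential — it is what forces $\mathrm{int}(K_{n+1}) \supseteq K_n$ and hence $M = \bigcup_n \mathrm{int}(K_n)$, without which $\rho$ would fail to be continuous and $f$ would fail to be a homeomorphism. A secondary bookkeeping point is handling the hypothesis that $M$ is not already compact: if $M$ were compact the exhaustion stabilizes and $\infty$ becomes isolated, so the ``not compact'' assumption is used to ensure $\rho$ takes arbitrarily small values on $M$ and $\infty$ is a genuine limit point; with that assumption in hand the argument above goes through.
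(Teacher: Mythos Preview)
Your overall strategy is exactly the paper's: build the metric on $M^*$ from the exhaustion $(K_n,c_n)$ of Lemma~\ref{lemma:sigma} via a ``distance to $\infty$'' function, set $d^*(x,y)=\min\bigl(d(x,y),\rho(x)+\rho(y)\bigr)$ and $d^*(x,\infty)=\rho(x)$, and then read off computable compactness and the computability of $f,f^{-1}$ from the pieces $K_n$. The paper credits this metric construction to Mandelkern.

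There is, however, a genuine gap in your specific $\rho$. As literally written, $\rho(x)=\inf\{2^{-n}:x\in K_n\}$ is identically~$0$, since the $K_n$ are nested upward. The intended step function $\rho(x)=2^{-n_0(x)}$ with $n_0(x)=\min\{n:x\in K_n\}$ is neither continuous nor computable: a boundary point of $K_n$ (with $x\notin K_{n-1}$) can be approached by points outside $K_n$, on which $\rho\le 2^{-(n+1)}$, so $\rho$ jumps by $2^{-(n+1)}$; and item~(4) of Lemma~\ref{lemma:sigma} only returns \emph{some} $n$ with $x\in K_n$, not the least one, while deciding $x\in K_{n-1}$ at a boundary point is a genuine $\Pi^0_1$ question. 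Your claim that ``the $c_n$'s give a modulus'' is the right intuition but does not hold for this step function.

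The paper's fix is to smooth the step function using the very data $c_n$ and $d(\cdot,K_n)$:
\[
  h(x)\;=\;\sup_{i\in\omega}\bigl\{\,c_i - d(x,K_i)\,\bigr\}.
\]
This is a supremum of $1$-Lipschitz functions, hence continuous; it is computable because once item~(4) produces an $n$ with $x\in K_n$ the terms with $i>n$ are all $\le c_n$ while the $i=n$ term equals $c_n$, so the sup is over $i\le n$. One then gets $h(x)\ge c_n$ for $x\in K_n$ and, using $K_i(c_i)\subseteq K_{i+1}$, $h(x)\le c_{n+1}$ for $x\notin K_{n+1}$. The first inequality lets you localise $f^{-1}$ to a fixed $K_n$ exactly as you planned (compute $d^*(x,\infty)=h(x)$, find $n$ with $h(x)>c_n$, then invert on the computably compact $K_n$); the second gives $M^*=B_{d^*}(\infty,c_n)\cup K_{n+1}$ and hence the finite $2^{-n}$-covers you need. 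With $h$ in place of your $\rho$, the rest of your outline is correct and coincides with the paper's argument.
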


\noindent \emph{A brief discussion.} 
In order to prove the theorem, we combine Lemma~\ref{lemma:sigma} with the metric on $M^*$ suggested in Mandelkern~\cite{point-paper}.  There are other potential ways to prove the theorem, e.g., using some effective version of the Urysohn's metrization theorem along the lines of \cite{metr}. However, the   explicit construction
 in  \cite{point-paper} provides us with some extra information about the constructed metric, and this will be   convenient in establishing the computable compactness of $M \cup \{\infty\}$. 
 We strongly suspect that the assumption that $M$ is not compact can be dropped without any effect on the uniformity of the procedure;  however, we will not verify this claim since this level of uniformity is not needed in the present paper.
 

\begin{proof}

Given a computably locally compact Polish space $(M, \rho)$, adjoin a point $\infty$ to $M$ and declare the point to be special in $M^*$.
Fix a sequence of uniformly computably compact neighbourhoods $(K_n)$ for $M$ and the corresponding computable sequence $(c_n)$ given by  Lemma~\ref{lemma:sigma}. 
For any $x \in M$, let $$h(x) = \sup \{c_i - \rho(x, K_i): i \in \omega \}.$$

\begin{claim}
The function $h \colon M \rightarrow \mathbb{R}$ is computable.
\end{claim}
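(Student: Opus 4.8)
The plan is to show that $h(x) = \sup_i\{c_i - \rho(x,K_i)\}$ is computable by exhibiting it as a supremum of a uniformly computable sequence of functions, together with a tail estimate that makes the supremum effectively attained to within any desired precision. First I would note that each map $x \mapsto \rho(x, K_i)$ is computable, uniformly in $i$: the set $K_i$ is uniformly computably compact (Lemma~\ref{lemma:sigma}), hence $\rho(\cdot, K_i)$ is a computable function $M \to \mathbb{R}$ (the infimum of a computable function over a computably compact set is computable, uniformly). Since the reals $c_i$ are uniformly computable, the function $g_i(x) := c_i - \rho(x, K_i)$ is computable uniformly in $i$, and $h = \sup_i g_i$.

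The key point is that the supremum is effectively controlled. Because $c_i \le 2^{-i}$ and $\rho(x, K_i) \ge 0$, we have $g_i(x) \le 2^{-i}$ for every $x$. Hence for any $x$ the terms $g_i(x)$ with $i$ large contribute less than any prescribed $2^{-n}$, so $h(x) = \max(\,g_0(x), \ldots, g_n(x), \text{something} \le 2^{-n-1}\,)$ can be approximated to within $2^{-n}$ by computing $\max_{i \le n+1} g_i(x)$, say — more carefully, $\big|\,h(x) - \max_{i \le n+1} g_i(x)\,\big| \le 2^{-n-1}$, since $h(x) \ge g_0(x) \ge \ldots$ only uses that the omitted terms are bounded by $2^{-n-1}$ and cannot decrease the max below what the first $n+2$ terms already give. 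Also $h$ is nonnegative and finite: for any $x$, picking $i$ with $x \in K_i$ (possible by property (2), and in fact findable by property (4)) gives $\rho(x,K_i) = 0$, so $h(x) \ge c_i > 0$; and the bound $g_i(x)\le 2^{-i}$ gives $h(x) \le 2^0 = 1$. So the supremum is genuinely a finite positive real, and to approximate $h(x)$ to within $2^{-n}$ we just compute rational $2^{-(n+2)}$-approximations of $g_0(x),\ldots,g_{n+1}(x)$ (uniformly in $x$ and $i$, from a Cauchy name of $x$) and take their maximum. This procedure is uniformly effective in a name for $x$, which is exactly computability of $h$ in the sense of Lemma~\ref{le: computable map}.

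The only mild subtlety — and the step I would expect to need the most care — is making the uniformity completely explicit: one must check that from a fast Cauchy name of $x$ one can, uniformly in $i$ and in the required precision, produce a rational approximation to $\rho(x, K_i)$, using the compact names of the $K_i$ furnished by Lemma~\ref{lemma:sigma}. This is standard (distance to a computably compact set, presented as a finite union of computable closed balls, is computable uniformly in the data), but it is where the hypotheses of Lemma~\ref{lemma:sigma} — uniform computable compactness of the $K_n$ and uniform computability of the $c_n$ — are actually consumed. Once that is in hand, the tail bound $g_i \le 2^{-i}$ does all the remaining work, and the claim follows.
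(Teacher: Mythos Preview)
Your approach via the tail bound $g_i(x)\le c_i\le 2^{-i}$ is natural, but the displayed inequality
\[
\bigl|\,h(x)-\max_{i\le n+1} g_i(x)\,\bigr|\le 2^{-n-1}
\]
is false as stated. The tail bound controls $\sup_{i>n+1}g_i(x)$ from above, but it says nothing when $\max_{i\le n+1}g_i(x)$ is very negative, which happens whenever $x$ is far from all of $K_0,\ldots,K_{n+1}$. Concretely, with $M=\mathbb{R}$, $K_i=[-i-1,i+1]$, $c_i=2^{-i}$, and $x=100$, one gets $h(x)=2^{-99}$ while $\max_{i\le 10}g_i(x)\approx -89$. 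The repair is easy and uses exactly the positivity observation you already made: since $h(x)>0$, replace $\max_{i\le n+1}g_i(x)$ by $\max\bigl(0,\max_{i\le n+1}g_i(x)\bigr)$; then the error really is at most $2^{-(n+2)}$.

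The paper's proof sidesteps this by leaning directly on property~(4) of Lemma~\ref{lemma:sigma} rather than on the tail bound. Given a name of $x$, one effectively finds some $n$ with $x\in K_n$; by nestedness $x\in K_m$ for all $m\ge n$, so $g_m(x)=c_m\le c_n=g_n(x)$ for $m\ge n$, and hence $h(x)=\max_{i\le n}g_i(x)$ \emph{exactly}. This reduces the computation to a finite maximum of uniformly $x$-computable reals with no approximation step at all. Your route, once patched, also works and in fact uses slightly less of Lemma~\ref{lemma:sigma} (only $c_i\le 2^{-i}$ and property~(2) for positivity), but the paper's argument is cleaner precisely because it avoids the tail-vs.-negative-terms bookkeeping.
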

\begin{proof}[Proof of Claim] Given $x$, we can uniformly find some $n$ so that $x \in K_n$; this is $(4)$ of Lemma~\ref{lemma:sigma}. 
It follows that, for such an $n$, $h(x) = \sup \{c_i - \rho(x, K_i): i  \leq  n \},$
because $(c_i)$ form a decreasing sequence and since $x \in K_m$ for all $m \geq n$.
Since $K_i$ are uniformly computable, $\rho(x, K_i)$ are  reals uniformly computable relative to $x$.
This makes $h(x)$ a uniformly $x$-computable real.
\end{proof}

Define $d$ on $M \cup \{\infty\}$ by the rule
 $$d(x,y) =  \begin{cases}
		 \inf \{ \rho(x,y), h(x)+ h(y) \}, \mbox{ if } x, y \in M, \\ 
		  h(x),  \mbox{ when } x \in M, y = \infty, 
			   \end{cases} $$
and  set $d(x,x) = 0$ for any $x \in M \cup \{\infty\}$.	
It is clear that for any $x,y \in M$ we have 
$$d(x,y ) \leq \rho(x,y),$$
and thus the identity map maps fast $\rho$-Cauchy names to fast $d$-Cauchy names. Thus, we can set $f = Id_M$.
It is shown in \cite{point-paper} that this definition of $d$ gives a metric compatible with the topology in the one-point compactification of $M$; the metric is clearly complete (by compactness).
By the claim above, $d(x,y)$ is uniformly computable for any pair of special points $x,y$ in $M \cup \{\infty\}$.

\

Recall that $K_n(c_n) \subseteq K_{n+1}$, where the latter is viewed as a computable Polish space w.r.t.~the new metric $d$. Recall that we set $f = Id_M$.
We can view $Id_M(K_n) = K_n$ as the computable image of a computably compact  space $K_n$ (w.r.t.~$\rho$) inside a computable
Polish space $K_{n+1}$ (w.r.t.~$d$).  It thus follows that $(K_n, d)$ is computably compact; see, e.g., \cite[Lemma 3.31]{EffedSurvey}, \cite[Theorem 3.3]{WeirauchSpacesStuff} and \cite[Proposition 5.5]{ArnoSurvey}.
This is also computably uniform (in $n$).
It follows from the definition of $d$ that, when $x \in ( M \cup \{\infty\} ) 
\setminus K_{n+1}$, we have $$\rho(\infty, x) = h(x) \leq c_{n+1} < c_{n},$$
see \cite{point-paper} for an explanation and for further details. 
We conclude that
$$M \cup \{\infty\}   = B(\infty, c_n) \cup \bigcup_{i \leq n+1} K_i.$$
To find a finite $2^{-n}$-cover of $M \cup \{\infty\}  $,  recall that $c_n < 2^{-n}$. Effectively uniformly fix a finite $2^{-n}$ cover  of  $\bigcup_{i \leq n+1} K_i$ (in the new metric $d$).
Together with $B(\infty, c_n)$, this gives a finite $2^{-n}$-cover of $M \cup \{\infty\}$.

\

We now show that $f^{-1}$ is also computable on $M$. (Recall that $f$ is the identity map, but viewed as a map between two different metrizations of $M$.) Assume $x \neq \infty$.
Calculate $d(x, \infty)$ to a precision sufficient to find some $n$ so that $d(x, \infty)> c_n$.
By the definition of the metric $d$ and the sequence $(c_n)$, it must be that $x \in K_n$. 
Since $f$ is  just the identity map, it is clearly guaranteed that $f^{-1}(x) \in (K_n, \rho)$.
It is well-known that the inverse of a computable homeomorphic map between computably compact spaces is computable;  see, e.g.,  \cite[Theorem 3.33]{EffedSurvey} or \cite[Corollary 6.7]{BBP}. This is also clearly effectively uniform. Apply this fact to $f: (K_n, \rho) \rightarrow (K_n, d)$ to calculate $f^{-1}(x)$. 
\end{proof}

Let $M$ be a computably locally compact space, $M^* = M \cup \{\infty\}$ be its computably compact 1-point compactification, and $f: M \rightarrow M^*$ a computable embedding having computable inverse; see Theorem~\ref{thm:1po}.
 
 The Hausdorff  distance between finite sets of computable points  is clearly computable. The hyperspace  of compact subspaces of a computably compact space is itself computably compact; this is folklore. (To see why, note that, given a finite $\epsilon$-cover, we can restrict ourselves to finite subsets of the centres of the balls involved in the cover. These finite subsets will give rise to an $\epsilon$-cover of the hyperspace.)
  Since $M^*$ is computably compact, we conclude:

 \begin{fact}\label{fact:ccc} For a computably locally compact Polish space $M$, the space  $\mathcal K(M^*)$ is computably compact as well, and this is uniform.
 
 \end{fact}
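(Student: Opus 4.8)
The plan is to deduce Fact~\ref{fact:ccc} directly from Theorem~\ref{thm:1po} together with the standard fact that the hyperspace $\mathcal{K}(K)$ of compact subsets of a computably compact space $K$ (under the Hausdorff metric) is itself computably compact, with all the relevant uniformities. Since Theorem~\ref{thm:1po} already gives us, uniformly from a computably locally compact presentation of $M$, a computably compact presentation of $M^* = M \cup \{\infty\}$, it suffices to verify the transition $K \text{ computably compact} \leadsto \mathcal{K}(K) \text{ computably compact}$, uniformly in a compact name of $K$. This is the folklore statement alluded to in the paragraph immediately before the Fact, and the sketch given there (restrict $\epsilon$-covers of $K$ to finite subsets of their centres, and take the induced cover of $\mathcal{K}(K)$ by Hausdorff-$\epsilon$-balls around the finitely many finite subsets of those centres) is essentially the whole argument.

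Concretely, I would proceed as follows. First, fix the dense sequence of special points of $\mathcal{K}(K)$ to be the finite subsets of special points of $K$, suitably coded; the Hausdorff distance between two such finite sets is computable from the metric on $K$ (an infimum/supremum over finitely many pairwise distances), so $\mathcal{K}(K)$ is a computable Polish space uniformly in a presentation of $K$. Second, to witness computable compactness it is enough, by the remark after Definition~\ref{def:effcomp}, to produce for each $n$ a finite basic open $2^{-n}$-cover of $\mathcal{K}(K)$. Given $n$, use the computable compactness of $K$ to obtain a finite basic open $2^{-(n+1)}$-cover $B(a_1,r_1),\dots,B(a_m,r_m)$ of $K$ with each $a_j$ special. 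Let $F$ be the (finite) collection of all nonempty subsets of $\{a_1,\dots,a_m\}$. The claim is that the Hausdorff-$2^{-n}$-balls centred at the elements of $F$ cover $\mathcal{K}(K)$: given any compact $C \subseteq K$, let $S = \{a_j : B(a_j,r_j)\cap C \neq \emptyset\} \in F$; then every point of $C$ lies within $2^{-(n+1)}$ of some $a_j \in S$, and every $a_j \in S$ lies within $2^{-(n+1)}$ of some point of $C$, so the Hausdorff distance from $C$ to $S$ is at most $2^{-(n+1)} < 2^{-n}$. Third, note that this production of covers is uniform in the compact name of $K$. Applying this with $K = M^*$, whose computably compact presentation is produced uniformly by Theorem~\ref{thm:1po}, yields that $\mathcal{K}(M^*)$ is computably compact uniformly in a computably locally compact presentation of $M$, which is exactly Fact~\ref{fact:ccc}.

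There is essentially no serious obstacle here; the statement is routine once Theorem~\ref{thm:1po} is in hand, which is why the paper states it as a ``Fact'' and defers to folklore. The only mild point requiring care is bookkeeping the uniformities: one must check that passing to the 1-point compactification (uniform by Theorem~\ref{thm:1po}), coding finite subsets of special points as special points of $\mathcal{K}(K)$, and extracting the finite covers of $\mathcal{K}(K)$ from finite covers of $K$, are all effective uniformly in the original presentation data. Since each of these steps is manifestly uniform, the composite is too. (One could alternatively invoke the cited results on computable images of computably compact spaces: $\mathcal{K}(M^*)$ is a computable image of $(\mathcal{K}(M^*) \text{ via finite-subset approximations})$, but the direct cover argument above is cleaner and self-contained.)
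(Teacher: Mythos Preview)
Your proposal is correct and follows essentially the same approach as the paper: invoke Theorem~\ref{thm:1po} to get $M^*$ computably compact uniformly, then apply the folklore fact that $\mathcal{K}(K)$ is computably compact for computably compact $K$, witnessed by taking finite subsets of the centres of a finite $\epsilon$-cover of $K$ as centres of an $\epsilon$-cover of $\mathcal{K}(K)$. You have simply spelled out in more detail (and with slightly more care regarding the $2^{-(n+1)}$ versus $2^{-n}$ bookkeeping) what the paper sketches in the paragraph immediately preceding the Fact.
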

\noindent In a different framework and using different methods, a similar fact was established in~\cite[Subsection 5.2]{lc1}.

\section{Proof of Theorem~\ref{main:thm}(1): $\mathcal{S}(G)$ is $\Pi^0_1$  in $\mathcal{K}(G^*)$.}

The first subsection offers an explanation of the fact, discussed in the introduction, that  the Chabauty space of a l.c.~$G$ can be viewed as a closed subspace of the hyperspace of closed sets of the 1-point compactification of $G$. 
In   the subsequent Subsection~\ref{ss:S(G) sub},  we will use the results from the foregoing section, specifically  Fact~\ref{fact:ccc}, to prove that this presentation is indeed $\Pi^0_1$.

\subsection{The Chabauty space of $G$ and the $1$-point compactification of $G$} \label{subs:folklorestuff}

The fact below appears to be folklore   in topological group theory.

\begin{fact}\label{fact:ChAl} Let $G$ be locally compact Polish group. There is a homeomorphic embedding for the Chabauty space $\mathcal S(G)$ into the hyperspace $\mathcal K(G^*)$
of closed (thus, compact) subsets of the $1$-point compactification $G^*$ of $G$. The embedding is given by the map $i:C \mapsto C \cup \{\infty\}$.
\end{fact}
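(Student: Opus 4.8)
The statement asserts that $i\colon \mathcal S(G)\to\mathcal K(G^*)$, $C\mapsto C\cup\{\infty\}$, is a homeomorphic embedding. First I would check that $i$ is well-defined: for $C$ a closed subgroup of $G$, the set $C\cup\{\infty\}$ is closed in $G^*$. Indeed $G^*$ is compact and $G^*\setminus(C\cup\{\infty\})=G\setminus C$ is open in $G$, hence open in $G^*$ (since $G$ is open in $G^*$), so $C\cup\{\infty\}$ is closed, thus compact, as an element of $\mathcal K(G^*)$. Injectivity is immediate since $C=(C\cup\{\infty\})\cap G$ recovers $C$ from its image. The core of the argument is then to show $i$ is a homeomorphism onto its image, which (since $\mathcal S(G)$ is compact and $\mathcal K(G^*)$ is Hausdorff) reduces to showing $i$ is continuous; a continuous bijection from a compact space to a Hausdorff space is automatically a closed map, hence an embedding.

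\textbf{Continuity.} To show $i$ is continuous I would work directly with the subbasic open sets of the Vietoris-type topology on $\mathcal K(G^*)$. For an open $U\subseteq G^*$, the two kinds of subbasic sets are $\{A\in\mathcal K(G^*): A\subseteq U\}$ and $\{A\in\mathcal K(G^*): A\cap U\neq\emptyset\}$. I would pull each back under $i$ and identify it with a basic open set of the Chabauty topology, whose subbasic sets are $\{C\in\mathcal S(G): C\cap L=\emptyset\}$ for $L\subseteq G$ compact and $\{C\in\mathcal S(G): C\cap V\neq\emptyset\}$ for $V\subseteq G$ open. For the "hit" sets: if $U\subseteq G^*$ is open and $\infty\notin U$, then $U\subseteq G$ is open in $G$, and $i^{-1}\{A:A\cap U\neq\emptyset\}=\{C:(C\cup\{\infty\})\cap U\neq\emptyset\}=\{C:C\cap U\neq\emptyset\}$, a subbasic Chabauty-open set; if $\infty\in U$, then $i(C)\cap U\neq\emptyset$ always (it contains $\infty$), so the preimage is all of $\mathcal S(G)$. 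For the "miss"/"subset" sets: $i^{-1}\{A:A\subseteq U\}=\{C:C\cup\{\infty\}\subseteq U\}$; this is empty unless $\infty\in U$, and when $\infty\in U$ we have $G^*\setminus U$ a compact subset of $G$, call it $L$, and the condition $C\cup\{\infty\}\subseteq U$ is equivalent to $C\cap L=\emptyset$ — exactly a subbasic Chabauty-open set. This shows preimages of subbasic open sets are open, so $i$ is continuous.

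\textbf{Why the image is closed / the remaining point.} Since $i$ is a continuous injection from the compact space $\mathcal S(G)$ into the Hausdorff space $\mathcal K(G^*)$, it is a homeomorphism onto $i(\mathcal S(G))$, and the image is compact hence closed. (The paper's abstract and Subsection~\ref{subs:folklorestuff} also want $\mathcal S(G)$ realized as a closed subset of $\mathcal K(G^*)$, which follows at once.) The one genuine subtlety — and the step I expect to be the main obstacle — is keeping the two topologies straight: the Chabauty topology on $\mathcal S(G)$ is not literally the subspace topology from $\mathcal K(G)$ with the Fell/Vietoris topology when $G$ is non-compact (compact sets of $G$ play the role that bounded/closed sets would), and one must verify that the translation of subbasic sets above is a genuine bijection between the two subbasis families, i.e. that every compact $L\subseteq G$ arises as $G^*\setminus U$ for some $U$ open in $G^*$ containing $\infty$, and conversely. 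This is exactly the defining property of the one-point compactification: the open neighbourhoods of $\infty$ in $G^*$ are precisely the complements of compact subsets of $G$ (together with $\infty$). Once that dictionary is in place the verification is routine, so I would state it explicitly as the crux and then let the subbasic-set computations above run through mechanically.
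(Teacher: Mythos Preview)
Your proposal is correct and follows essentially the same route as the paper's proof: define $i(C)=C\cup\{\infty\}$, verify continuity by pulling back the Vietoris sub-basic sets $\{F:F\cap V\neq\emptyset\}$ and $\{F:F\subseteq U\}$ through the case split $\infty\in U$ versus $\infty\notin U$, and then use that $\mathcal S(G)$ is compact and $\mathcal K(G^*)$ Hausdorff to upgrade the continuous injection to a homeomorphic embedding. Your write-up is in fact slightly more careful than the paper's (you explicitly note well-definedness, injectivity, and the dictionary between neighbourhoods of $\infty$ and compact subsets of $G$), but the argument is the same.
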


 We are not aware of a reference where this fact is thoroughly explained\footnote{
Cornulier mentions this fact  at the very beginning of his paper~\cite{Cornulier:11}, but in terms of convergence of nets.
A more general fact  appeared   in Fell~\cite{Fell}; see (II) on p.~475.
It is stated much more explicitly, but still not quite in the form that we need it,  in  lecture notes  by
   de la Harpe on arxiv~\cite{delaharpe};  see ``2.~Prop.~(v)'' on pages 2-3.}, so we give its detailed verification in Appendix~\ref{B}.


\subsection{$\mathcal S(G)$ as a $\Pi^0_1$-subset of $\mathcal K(G^*)$} \label{ss:S(G) sub}Recall that an open set $U$ is computably enumerable~if  there is a computably enumerable sequence of basic open balls making up the set. A closed set is $\Pi^0_1$ or effectively closed if its complement is computably enumerable open. 
For a computably l.c.~$G$,
we think of  $\mathcal K (G^*)$ as given by  its computably compact presentation established in Fact~\ref{fact:ccc}.

\begin{proposition}\label{thm:cha} For a computably locally compact group $G$, the Chabauty space $\mathcal S(G)$  is $\Pi^0_1$ (effectively closed) in the computably compact hyperspace $\mathcal K(G^*)$ of compact subsets of the 1-point compactification of $G$.

\end{proposition}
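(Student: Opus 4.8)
The goal is to show that the collection of sets of the form $i(C) = C \cup \{\infty\}$, where $C$ ranges over closed subgroups of $G$, is $\Pi^0_1$ in the computably compact hyperspace $\mathcal{K}(G^*)$; equivalently, the set of $A \in \mathcal{K}(G^*)$ that fail to be of this form must be enumerated as a c.e.\ union of basic open sets of the hyperspace. Recall that basic open sets in $\mathcal{K}(G^*)$ are of the form $\{A : A \cap L = \emptyset \text{ and } A \cap U_j \neq \emptyset \text{ for } j < m\}$ for a basic closed $L$ and basic open $U_0,\dots,U_{m-1}$; since $G^*$ is computably compact, we have an effective list of these, and intersection/non-intersection conditions behave well effectively.

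\textbf{Decomposing the defining conditions.} The plan is to write $A \in i(\mathcal{S}(G))$ as a conjunction of conditions each of which is individually $\Pi^0_1$ (i.e.\ its negation is c.e.\ open in $\mathcal{K}(G^*)$). First, $A \ni \infty$: since $\{\infty\}$ is a computable point and the map $A \mapsto A$ restricted to checking membership of a fixed computable point is upper semicomputable in the relevant direction (the set of $A$ \emph{not} containing $\infty$ is c.e.\ open, because $A \not\ni \infty$ iff $A$ is disjoint from some basic closed neighbourhood of $\infty$, and $G^*$ being computably compact lets us enumerate such witnesses), this is $\Pi^0_1$. Second, $A \setminus \{\infty\}$ must be a \emph{subgroup} of $G$. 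Here I would use the image embedding $f\colon G \to G^*$ from Theorem~\ref{thm:1po} to transport everything to $G$, and express the subgroup conditions as:
\begin{itemize}
\item[(i)] $e \in A$ (the identity of $G$ belongs to $A$);
\item[(ii)] closure under multiplication: for all special points $a, b$ of $G$ and all rational $\epsilon$, if $A$ meets $B(a,\epsilon)$ and $A$ meets $B(b,\epsilon)$, then $A$ meets a suitable neighbourhood of $a \cdot b$;
\item[(iii)] closure under inverse: for all special $a$ and rational $\epsilon$, if $A$ meets $B(a,\epsilon)$ then $A$ meets a suitable neighbourhood of $a^{-1}$.
\end{itemize}
The subtlety is that "closure under multiplication" of a \emph{closed} set is not literally a pointwise statement, so the right formulation is in the style of a "pre-closedness under the operations" condition that, together with closedness of $A$ (automatic in the hyperspace) and the effective openness of multiplication (Fact~\ref{fact:open}), is equivalent to $A \setminus \{\infty\}$ being a subgroup. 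Concretely: $A \setminus \{\infty\}$ is a subgroup iff for every pair of basic open balls $U, V$ of $G$ with $A \cap U \neq \emptyset$ and $A \cap V \neq \emptyset$, and every basic open $W$ with $\overline{U \cdot V} \subseteq W$ (in $G$), we have $A \cap W \neq \emptyset$; and symmetrically for inverse; and $A \ni e$. The negation of each such implication — "$A$ meets $U$, $A$ meets $V$, but $A$ is disjoint from $W$" — is a basic open condition on $A \in \mathcal{K}(G^*)$, and we can enumerate all of them effectively because multiplication is effectively open (so the relation $\overline{U\cdot V}\subseteq W$ among basic balls is c.e.), and disjointness from $W$ (equivalently, from a slightly smaller basic closed ball whose closed version is contained in $W$) is a basic open hyperspace condition since $G^*$ is computably compact. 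One must be slightly careful that $W$ is a subset of $G$ (not touching $\infty$), which is fine since basic balls of $G$ embed into $G^*$ under $f$.

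\textbf{Why these conditions suffice, and the main obstacle.} For the forward direction, if $A = C \cup \{\infty\}$ with $C \leqq G$ closed, then clearly (i)--(iii) and $\infty \in A$ hold. For the converse, suppose $A$ satisfies all the listed conditions. From $\infty \in A$ and $A$ closed we get $A = (A \setminus \{\infty\}) \cup \{\infty\}$ with $A\setminus\{\infty\}$ closed in $G$ (here one uses that $\infty$ has a neighbourhood basis in $G^*$ and $A \cap G$ is relatively closed, hence closed in $G$ since $G$ is closed... actually $G$ is open in $G^*$, so $A \cap G$ is closed in $G$ as the trace of a closed set). Let $C = A \setminus \{\infty\}$. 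The pre-closedness conditions (ii), (iii) say: whenever $C$ comes arbitrarily close to points $a$ and $b$, it comes arbitrarily close to $ab$, hence — because $C$ is closed and multiplication is continuous — $C$ is genuinely closed under multiplication, and likewise under inverse; with (i) this makes $C$ a subgroup. I expect the \textbf{main obstacle} to be the careful bookkeeping around the 1-point compactification: one must confirm that the basic open/closed balls used to express conditions (i)--(iii) can be taken inside $G$, that the hyperspace presentation of $\mathcal{K}(G^*)$ from Fact~\ref{fact:ccc} makes "disjoint from a basic closed ball of $G^*$" and "meets a basic open ball" decidably-open and c.e.-open respectively in a uniform way, and — most delicately — that the $\exists$-over-neighbourhoods reformulation of subgroup-closedness for closed sets is genuinely equivalent to $C$ being a subgroup (this is where effective openness of the group operations, Fact~\ref{fact:open}, enters, ensuring the relation $\overline{U\cdot V} \subseteq W$ is c.e.\ and that enough such triples $(U,V,W)$ exist to detect any failure of closure). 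Once these points are pinned down, the complement of $i(\mathcal{S}(G))$ in $\mathcal{K}(G^*)$ is exhibited as an effective union of basic open hyperspace sets, which is exactly the assertion that $\mathcal{S}(G)$ is $\Pi^0_1$; and the construction is uniform in a computable locally compact presentation of $G$, as claimed in the surrounding discussion.
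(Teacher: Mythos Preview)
Your proposal is correct and follows essentially the same route as the paper. The paper streamlines things by working with the single computable map $g(x,y)=f\bigl(f^{-1}(x)\cdot [f^{-1}(y)]^{-1}\bigr)$, so that closure under multiplication and inverse become a single family of ``counterwitness triples'' $(B,D,V)$ with $g(B\times D)\subseteq V$; and it makes the ``$K$ disjoint from $V$'' condition explicitly $\Sigma^0_1$ via the metric inequality $d(K,\mathrm{cntr}(V))>r(V)$, which is precisely the closed-ball trick you gesture at. One small imprecision: the enumerable containment you need is $U\cdot V\subseteq W$ (no closure), and it is c.e.\ by the \emph{computability} (effective continuity) of the group operation, not by its effective openness.
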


\begin{proof}
 In $\mathcal K(G^*)$,  one  can distinguish between special points that contain $\infty$ and those that are generated by points coming from $G$;
 recall that the point $\{\infty\}$ is special in $G^*$. 
 We  suppress the subscript ``$H$'' in ``$d_H$'' throughout and simply write  ``$d$'' for the Hausdorff metric.  
Recall   that in Theorem~\ref{thm:1po}, $f : G \rightarrow G^*$ is computable, and $f^{-1}$ is computable  everywhere   except for $\infty$ where it is undefined.

A non-empty closed set  $H \subseteq G$ is a closed subgroup iff $x y^{-1} \in H$, for all $x,y \in H$.
For the corresponding $K = H \cup \{\infty\}$ in $\mathcal{K}(G^*)$, this condition corresponds to $$\forall x, y \in K \, \, \min \{ d(x, \infty), d(y, \infty) \} > 0 \rightarrow  f( f^{-1}(x)\cdot  [f^{-1}(y)]^{-1} )\in K, $$
for all $x,y \in K \setminus \{\infty\}.$
Recall that $f$ is effectively continuous and effectively open, and the group operations are effectively continuous and effectively open as well.
It follows that the function $$g(x,y)=  f( f^{-1}(x) \cdot [f^{-1}(y)]^{-1} )$$
is computable on its domain.
Thus, the condition above \emph{fails} if, and only if, for some small enough basic balls $B, D, V$ (in the metric of $G^*$), all not containing $\infty$,
we have that 
$$ [B \cap K \neq \emptyset]  \wedge [D \cap K  \neq \emptyset]  \wedge [V \cap K  = \emptyset] \wedge [g(B, D) \subseteq V],$$
where the basic open balls not containing $\infty$ and satisfying $g(B, D) \subseteq V$ can be effectively enumerated.
We will say that a triple of basic open balls $(B, D, V)$ is a  \emph{counterwitness} if  $g(B, D) \subseteq V$.   

We are ready to effectively enumerate the complement of $\mathcal S(G)$ in $\mathcal K(G^*)$.
Given a counterwitness triple $(B, D, V)$, define an effectively open set in $\mathcal K(G)$ that consists of all  closed (compact) subsets $K$
that satisfy:

\begin{enumerate}

\item $B \cap K \neq \emptyset $;

\item  $D \cap K \neq \emptyset $;

\item $d(K, cntr(V))> r(V)$,

\end{enumerate}
where $cntr(V)$ and $r(V)$ are the distinguished rational centre and the radius of the basic open $V = B(cntr(V), r(V))$
in the metric of $G^*$. (Note that the former two intersections, if they hold, will be witnessed already by a sufficiently close finite approximation to $K$, in $\mathcal{K}(G^*)$, that is, by a finite collection of special points in $G^*$. The same can be said about condition $(3)$. In other words, the conditions are $\Sigma^0_1$.) Let $\mathcal{W}$ be the collection of all such $K$, where $(B, D, V)$ ranges over all counterwitness triples. Since $(1)-(3)$ are uniformly $\Sigma^0_1$, $\mathcal{W}$ is effectively open in $\mathcal{K}(G^*)$.

If a compact set does not contain $\{\infty\}$, then it is separated from $\{\infty\}$ by a non-zero distance. Also, any subgroup of $G$ has to contain the identity element. It is clear that
$$\mathcal{V} = \{ K \in \mathcal K(G^*) : d(\infty, K) >0 \mbox{ or  } d(f(e), K) > 0 \} $$
is an effectively open set as well.

Then $K \in \mathcal K (G^*)$ corresponds to a closed subgroup of $G$ if, and only if,
there is no counterwitness triple for $K$, and $f(e),\infty \in K$. It follows that 
$$ \mathcal S(G) = \mathcal K(G^*) \setminus ( \mathcal{V} \cup \mathcal{W}),$$
where $\mathcal{V} \cup \mathcal{W}$ is a c.e.~open set in $\mathcal K(G^*)$.
We conclude that  $\mathcal S(G)$ is  $\Pi^0_1$ in   $\mathcal K(G^*)$. 
\end{proof}

\section{Proof of Theorem~\ref{main:thm}(2): \\  $\mathcal{S}(G)$ is not computably closed in general}

We will show more than is stated in Theorem~\ref{main:thm}(2).  Recall that that for a $\Pi^0_1$-closed subset of a computable space, being computable is equivalent to having a computable dense sequence of points; see Subsection~\ref{subs:prelimPolish}.   

We will prove that  there exists a computable abelian discrete $G$ so that the \emph{only} computable points of $\mathcal S(G)$
are the ones corresponding to  $\{0\}$ and $G$, yet $G$ has uncountably many  subgroups. Clearly,  $\{0\}$ and $G$ have to be computable subgroups; in this sense our result is also optimal. 
We split the proof into several subsections.

\subsection{The effective correspondence lemma} In the notation of the previous section, we have:

\begin{lemma}\label{lem:points}
Let $G$ be a computably locally compact group, and $\mathcal S(G) \subseteq \mathcal K(G^*)$ its effectively closed Chabauty space.
Then the following are equivalent:

\begin{enumerate}
\item $H \leqq_c G$ is computably closed.
\item $H^* = H \cup \{\infty\}$ is a computable point in $\mathcal K(G^*)$.
\end{enumerate}
\end{lemma}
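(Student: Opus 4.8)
\emph{Plan.} The first move is to reduce the lemma to a statement that does not mention the hyperspace at all. Since $\mathcal K(G^*)$ is computably compact (Fact~\ref{fact:ccc}), a closed set $C\subseteq G^*$ is a computable point of $\mathcal K(G^*)$ if, and only if, $C$ is a \emph{computable} closed subset of $G^*$ — equivalently, $C$ is simultaneously c.e.\ closed (the relation $B_i\cap C\neq\emptyset$ is $\Sigma^0_1$ in $i$) and co-c.e.\ closed ($G^*\setminus C$ is effectively open); this is the standard correspondence between points of the hyperspace and located closed sets in a computably compact space, and I would cite it from \cite{BBP,EffedSurvey}. Thus the lemma becomes: \emph{$H$ is a computable closed subset of $G$ iff $H^*=H\cup\{\infty\}$ is a computable closed subset of $G^*$}, and I would prove this by transporting c.e.-closedness and co-c.e.-closedness back and forth along the maps $f$ and $f^{-1}$ supplied by Theorem~\ref{thm:1po}.

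The two enabling observations are: (i) $f(G)=G^*\setminus\{\infty\}$ is an effectively open subspace of $G^*$, and $f$, viewed as a map onto it, is a computable homeomorphism with computable inverse, hence effectively open (Lemma~\ref{le: computable map} and the discussion following it); and (ii) because every closed subgroup contributes $\infty$ to $H^*$, we have $G^*\setminus H^*=G\setminus H$ as \emph{sets}, and this set lies inside $f(G)$, so it never meets the one point where $f^{-1}$ is undefined. For $(1)\Rightarrow(2)$: given a uniformly computable $\rho$-dense sequence $(h_n)$ of $H$, the sequence $(f(h_n))_n$ together with the special point $\infty$ is uniformly computable and $d$-dense in $H^*$ (using $d\le\rho$ on $G$), so $H^*$ is c.e.\ closed; and $G^*\setminus H^*=f(G\setminus H)$ is the $f$-image of an effectively open subset of $G$ lying in $f(G)$, hence effectively open in $G^*$, so $H^*$ is co-c.e.\ closed. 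For $(2)\Rightarrow(1)$: fixing a computable dense sequence $(q_n)$ of $H^*$, for a basic $\rho$-ball $B$ of $G$ the set $f(B)$ is effectively open and disjoint from $\infty$, and $B\cap H\neq\emptyset \iff f(B)\cap H^*\neq\emptyset \iff (\exists n)\,q_n\in f(B)$, which is $\Sigma^0_1$ uniformly in $B$, so $H$ is c.e.\ closed; and $G\setminus H=f^{-1}(G^*\setminus H^*)$ is the $f^{-1}$-image of an effectively open subset of $f(G)$, hence effectively open in $G$, so $H$ is co-c.e.\ closed. Every step is uniform in the given presentations, which yields the "all possible uniformity" claim.

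\emph{Main obstacle.} There is no single deep step; the work is entirely in the bookkeeping around $\infty$. The one genuine subtlety is that one \emph{cannot} effectively extract from a computable dense sequence of $H^*$ a computable dense sequence of $H$ (deciding $q_n\neq\infty$ is not possible), which is precisely why the $(2)\Rightarrow(1)$ argument for c.e.-closedness goes through the relation "$B\cap H\neq\emptyset$" together with the effective openness of $f$, rather than through sequences. I would also be careful to state explicitly the transfer facts for computable homeomorphisms between a computable Polish space and an effectively open subspace of another, and to record once and for all the equivalence "computable point of $\mathcal K(K)$ $=$ computable closed subset of $K$" for computably compact $K$, since that is what glues the hyperspace formulation of (2) to the located-set arguments.
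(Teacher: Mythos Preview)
Your proposal is correct and follows essentially the same route as the paper: both first reduce to the equivalence ``computable point of $\mathcal K(K)$ $\iff$ computable closed subset of $K$'' (which the paper proves as a separate Fact inside the proof), and then transport c.e.-closedness and co-c.e.-closedness along $f$ and $f^{-1}$ exactly as you describe.

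The only minor divergence is in the $(2)\Rightarrow(1)$ c.e.-closedness step. You go through the ball-intersection characterisation $B\cap H\neq\emptyset\iff f(B)\cap H^*\neq\emptyset$, which is clean. The paper instead \emph{does} go through the dense sequence: it observes that, since $\infty$ is a special point, the condition $q_n\neq\infty$ is $\Sigma^0_1$, so one can \emph{enumerate} (not decide) those $n$ with $q_n\neq\infty$ and apply $f^{-1}$ to obtain an effectively enumerated dense set in $H$; this already suffices for c.e.-closedness. So your ``genuine subtlety'' is slightly overstated: extracting a \emph{decidable} subsequence is indeed impossible, but a c.e.\ enumeration is enough and is what the paper uses. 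Your alternative avoids even thinking about this, which is arguably tidier.
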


\begin{proof}
The proof is not difficult, but it relies heavily on various properties of c.e.~closed and open sets, computable and effectively open maps (see Subsection~\ref{subs:prelimPolish}) and the technical results established earlier (e.g., Theorem~\ref{thm:1po}).
We will also use the following, apparently   well-known, fact. 

\begin{fact}
A closed subset $C$ of a computably compact space $K$ is computably closed if, and only if, 
$C$ is a computable point in the space $\mathcal{K}(K)$ (under the Hausdorff metric).
\end{fact}

\begin{proof}
We have that $C$ is computably closed in $K$ if, and only if,   it is a computably compact subspace of $K$; see, e.g., \cite[Proposition 3.29]{EffedSurvey}.

Assume $C$ is computably closed. Since $C$ is computably compact,
we can uniformly compute the Hausdorff  distance 
$d(x, K) = \inf_{y \in C} d(x,y)$ to any special point $x \in K$. (This is  because taking the infimum of a computable function over a computably compact set 
gives a computable real, and this is uniform, as mentioned in Subsection~\ref{subs:prelimPolish}.) If follows that we can also uniformly calculate the Hausdorff distance between $C$ and any finite set of special points.
The finite sets of special points of $K$ are the special points in $\mathcal{K}(K)$. Given $n$, search for a finite set of special points $D_n$ at distance $2^{-n}$ from $C$.
The sequence $(D_n)_{n \in \omega}$ is a computable fast Cauchy name of $C$.

Conversely, suppose $C$ is a computable point in $\mathcal{K}(K)$. A basic open ball $B(x, r) \subseteq  K$ does not  intersect $C$ if (and only if) $d(x, C) > r$, which is $\Sigma^0_1$.
On the other hand, $C\cap B(x,r) \neq \emptyset$ is equivalent to $d(x, C) < r$, which is also $\Sigma^0_1$. The condition ``$C\cap B(x,r) \neq \emptyset$ is $\Sigma^0_1$''  is equivalent to $C$ having a computable dense sequence of points (folklore; see \cite[Lemma 3.27]{EffedSurvey}).\end{proof}

Assume (1).
By the fact above, it is sufficient to show that $H^*$ is computably closed.  Fix $f$ given by Theorem~\ref{thm:1po}.
 Since $f: G \rightarrow G \cup \{\infty\}$ is computable, it uniformly maps computable points to computable points.
In particular, it maps the dense computable set $X$ of $H$ to a uniformly computable sequence that we denote $f(X)$. The set $f(X) \cup \{\infty\}$ is dense in $H^*$.  It follows that $H^*$ is c.e.~closed.
To conclude that $H^*$ is computably closed, we need to show that it is $\Pi^0_1$, i.e., its complement is c.e.~open. By Theorem~\ref{thm:1po}, $f^{-1}$ is computable when defined. Since $f$ is a homeomorphism of $G$ onto its image in $G\cup \{\infty\}$ (and in particular is injective), this is equivalent to saying that $f$ is effectively open.
Since $f$ is effectively open, it maps the c.e.~complement of $H$ (in $G$) to a c.e.~open set. But this set is  the complement of $H^*$ in $G^*$. Thus, $H^*$ is computable closed in  
$G^*$, and 
 (2) follows from (1).

Now assume $(2)$. 
As noted above, $(2)$ is equivalent to saying that $H^*$ is computable closed in $ G^*$. 
Recall that  $f^{-1}$ is computable (and defined) everywhere except $\infty$.
Since $\infty$ is a special point in $G^*$, we can effectively list those points in the computable dense sequence of $H$ that are not equal to $\infty$.
 The $f^{-1}$-images of these points make up an effective dense set of 
$H$ in $G$. It shows that $H$ is c.e.~closed.
Let  $U$ be the c.e.~complement of $H^*$ in $\mathcal K(G^*)$; note $\infty \notin U$. Since $f$ is computable it is effectively continuous, 
and thus $f^{-1}(U)$ is c.e.~open in $G$. Since we have $H = G \setminus f^{-1}(U)$, it follows that $H$ is both c.e.~and $\Pi^0_1$, and therefore computably closed.
\end{proof}

We view a computable discrete group  as a computable Polish group w.r.t.~the discrete metric: 
\begin{center}
 $d(x,y) =1$ whenever $x \neq y$.
 \end{center}
 Any such computable discrete group is clearly computably locally compact. Furthermore, a subset of such a group is c.e.~iff it is c.e.~as a closed or open subset of~$G$.
 In particular, we have:
\begin{fact} \label{fact:fff} For a discrete computable $G$,  a subgroup $H \leq G$ is a computable subgroup iff it is computably closed w.r.t.~the discrete metric defined above. 
\end{fact}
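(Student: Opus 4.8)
\textbf{Proof plan for Fact~\ref{fact:fff}.} The statement is an easy translation between the algebraic notion of a computable subgroup of a discrete computable group and the topological notion of a computably closed subset with respect to the discrete metric, so the plan is simply to unwind both definitions and observe they coincide. First I would recall that under the discrete metric $d(x,y)=1$ for $x\neq y$, the basic open balls of radius $r\le 1$ centred at a special point $x$ are exactly the singletons $\{x\}$ (for $r>1$ they are the whole space), and every point is special since the domain is a computable subset of $\NN$; hence the name $N^x$ of a point $x$ is, up to the trivial coding, just the information identifying which element $x$ is. In particular, a subset $A\subseteq G$ is open and closed simultaneously, and the set $\{i : B_i\cap A\neq\ES\}$ is (modulo the bijection between indices and singletons) exactly $A$ itself as a subset of $\NN$.

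Next I would argue the forward direction: if $H\le G$ is a computable subgroup, then $H$ is a computable subset of $\NN$, so the set $\{i : B_i\cap H\neq\ES\}$ is computable, and likewise $\{i : B_i\cap(G\setminus H)\neq\ES\}$ is computable. Thus both $H$ and its complement in $G$ are c.e.\ as closed sets, which by the definition in Subsection~\ref{subs:prelimPolish} means $H$ is computably closed (equivalently, the distance function $x\mapsto d(x,H)$, which here takes only the values $0$ and $1$, is computable). For the converse, if $H$ is computably closed with respect to the discrete metric, then by definition both $H$ and $G\setminus H$ are c.e., but as noted a subset of a discrete computable group is c.e.\ as a closed set iff it is c.e.\ as a subset of $\NN$; a subset of $\NN$ that is both c.e.\ and co-c.e.\ (within the computable domain of $G$) is computable. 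Since $H$ is assumed to be a subgroup, the group operations restricted to $H$ are computable (being restrictions of computable functions to a computable set), so $H$ is a computable subgroup in the sense of Mal'cev and Rabin.

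There is essentially no obstacle here — the only mild subtlety is being careful that ``c.e.\ as a closed set'' and ``c.e.\ as an open set'' both collapse to ``c.e.\ as a subset of the domain'' in the discrete setting, which is exactly the observation flagged in the sentence preceding the Fact (``a subset of such a group is c.e.\ iff it is c.e.\ as a closed or open subset of $G$''), so I would phrase the proof as a one-paragraph appeal to that observation together with the equivalence ``computably closed $\iff$ both the set and its complement are c.e.'' and the trivial remark that a c.e.\ and co-c.e.\ subset is computable.
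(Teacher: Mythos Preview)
Your proposal is correct and matches the paper's approach: the paper does not give a separate proof of this fact but rather states the key observation inline just before it (``a subset of such a group is c.e.\ iff it is c.e.\ as a closed or open subset of~$G$''), from which the fact follows immediately by the characterization of computably closed sets as those that are both c.e.\ and co-c.e. Your write-up simply spells out this implicit argument in detail.
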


By Fact~\ref{fact:fff} above, in the discrete case we can   `forget' about topology and simply work in the standard, discrete recursion-theoretic setting.
This approach is taken in the remainder of this section, which does not mention  computable topology  at all.

\subsection{Computably simple  groups}

All groups in this subsection are discrete, abelian, and at most countable. A lot is known about  computable presentations of such groups; however, the definition below is new. (For the foundations of computable abelian group theory, we cite the surveys \cite{Khi,melbsl}.)

\begin{definition}
A computable group is   \emph{computably simple} if it has no non-trivial computable proper normal subgroups.


 


\end{definition}


\begin{theorem}\label{thm:notsimple}
There exists a computably simple presentation of the free abelian group of rank $\omega$.
\end{theorem}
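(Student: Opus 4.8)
The plan is to build a computable copy of the free abelian group $A = \bigoplus_{n \in \omega} \ZZ$ with a carefully chosen enumeration of its domain, using a priority construction to diagonalize against all potential computable subgroups. Let $(e_n)_{n \in \omega}$ be a free basis of $A$. Since every proper nonzero subgroup of an abelian group is normal, ``computably simple'' here just means: no c.e.\ subgroup $H$ with $H \neq 0$ and $H \neq A$. We enumerate a list of requirements $R_k$, one for each pair $(\varphi_k, \text{possible subgroup data})$, where $\varphi_k$ ranges over partial computable functions that could enumerate a subgroup $W_k \leq A$. The requirement $R_k$ says: if $W_k$ is (the enumeration of) a subgroup of $A$ and $W_k \neq 0$, then $W_k = A$. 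To satisfy $R_k$ we wait for $W_k$ to enumerate some nonzero element $g$; once it does, $g$ has a well-defined nonzero coordinate vector, so it involves some basis element $e_m$ with a nonzero coefficient $c$. We then want to force $e_m \in W_k$ (which, combined with doing this for cofinitely many $m$ after a point, forces $W_k = A$) — but of course we cannot change $W_k$. The actual mechanism must instead be on our side: we choose \emph{which} computable copy of $A$ we build, i.e.\ we control the names (Mal'cev indices) of elements, so that the single element $g$ that $W_k$ commits to already generates everything relevant.

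The cleaner route, which I would actually carry out, is the standard ``making the group look like a $\QQ$-vector-space-ish blob'' trick adapted to $\ZZ^{(\omega)}$: we build $A$ so that its computable presentation has the feature that from \emph{any} nonzero element one can computably (uniformly) recover a generating set. Concretely, reserve a ``backbone'' and attach the basis elements to it via divisibility-like relations that are introduced lazily. Here is the construction I would use. We build $A$ as an increasing union of finite-rank free groups $A_s$. We maintain a current finite basis $B_s = \{b_0, \dots, b_{\ell_s}\}$ of $A_s$. Requirement $R_k$, when it sees $W_k$ enumerate a nonzero $g \in A_s$, acts as follows: writing $g = \sum_i a_i b_i$ in the \emph{current} basis, pick a coordinate $i$ with $a_i \neq 0$ and \emph{refine the presentation}: introduce a new generator $b_i'$ and the relation $a_i b_i' = g$ is wrong (torsion-free!), so instead we do a change of basis of the ambient group — replace $b_i$ by $b_i' := g$ if $a_i = \pm 1$, and otherwise first subdivide. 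This is exactly where the free-abelian (rather than $\QQ$-vector-space) setting bites: we cannot divide, so we must ensure that when $R_k$ requires attention the committed element $g$ is primitive (part of a basis) in $A_s$, or can be made so without disturbing higher-priority requirements. The device for this is to give each $R_k$ a private ``fresh'' basis vector $b_{j(k)}$ that nothing lower-priority has touched, and arrange (by delaying, i.e.\ by a finite-injury priority argument) that before $R_k$ commits, the entire group is, from the point of view of $R_k$'s region, spanned by sums of the already-committed vectors plus $b_{j(k)}$; then when $W_k$ reveals $g$, if $g$ lies in that region we perform an invertible integer change of basis making $g$ a basis element, whence $\langle g \rangle$ together with the finitely many prior basis elements and the action of lower requirements will eventually exhaust $A$.

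The key steps, in order, are: (1) set up the list of requirements $R_k$ and the finite-injury priority ordering; (2) describe the stage-$s$ group $A_s$ as a finite-rank free abelian group with a distinguished basis, together with a computable coordinatization of the domain $\omega$, and describe how basis changes at stage $s$ are effected by integer matrices in $\GL$ over $\ZZ$ so that the presentation stays computable and the limit group is $\cong \ZZ^{(\omega)}$; (3) describe the action of $R_k$: wait for $W_k$ to show a nonzero element $g$, check $g$ lies in the ``controlled region'' (if not, $R_k$ is automatically fine, because then $W_k$ will be forced by lower-priority action to also contain the controlled part, or — simpler — we set things up so \emph{every} nonzero element is eventually in a controlled region), perform the change of basis making $g$ primitive, and then mark all current basis elements as ``to be absorbed into $W_k$'', meaning future requirements and the bookkeeping will add new generators only as $\ZZ$-combinations of $g$ and previously absorbed elements, so that $\langle W_k \rangle$ grows to all of $A$; (4) verify injury is finite (each $R_k$ acts at most once, or finitely often, and only injures lower-priority requirements by a basis change they can accommodate); (5) verify the limit: the domain with the limiting operations is a computable free abelian group of rank $\omega$, and for each $k$, if $W_k$ is a subgroup and contains a nonzero element then $R_k$ eventually forces $W_k = A$. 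I expect the main obstacle to be step (3)–(4): making precise the ``controlled region'' so that (a) every nonzero group element is guaranteed to eventually fall into some $R_k$'s controlled region (so no nonzero c.e.\ subgroup escapes diagonalization), while (b) the required change of basis that makes $g$ primitive never destroys the work of higher-priority requirements, and the whole thing still limits to a \emph{free} group of \emph{infinite} rank rather than collapsing the rank or introducing torsion. Handling (b) is the genuinely delicate combinatorial point, since an arbitrary committed $g$ need not be primitive and we have no division; the resolution is to ensure, via the priority delays, that at the moment $R_k$ acts, $g$ has a coordinate equal to $\pm 1$ in the current basis (achievable by keeping each requirement's private basis vector ``pristine'' until it acts), so the change of basis is a single elementary transvection-type move in $\GL_{\ell_s}(\ZZ)$.
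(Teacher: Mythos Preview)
Your proposal has a genuine gap at the very first step: you misread ``computably simple'' as ``no nontrivial proper \emph{c.e.}\ subgroup,'' whereas the paper (and the intended notion) requires only that there be no nontrivial proper \emph{computable} (i.e.\ decidable) subgroup. The c.e.\ version is impossible for the free abelian group of rank~$\omega$: for any nonzero element $g$ in any computable presentation, the cyclic subgroup $\langle g\rangle=\{ng:n\in\ZZ\}$ is c.e.\ (just list the multiples) and is proper because the rank exceeds~$1$. So there is no hope of meeting your requirements $R_k$ as stated.

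Even if one reinterprets your $W_k$ as the set decided by $\varphi_k$, the strategy you describe still does not work. You wait only for a single nonzero $g\in W_k$ and then try to arrange that $g$ ``generates everything relevant,'' e.g.\ by a change of basis making $g$ primitive and then adding new generators only inside $\langle g,b_0,\dots,b_{\ell_s}\rangle$. But that collapses the rank to a finite number; a single element of $\ZZ^{(\omega)}$ can never generate more than a rank-$1$ subgroup, and no amount of basis manipulation on our side changes what the fixed set $W_k$ contains. The missing idea is that, when diagonalizing against a \emph{decidable} $H_e$, one should wait for \emph{both} a nonzero $x\in H_e$ and some $y\notin H_e$, and then exploit the freedom to introduce relations on \emph{fresh} generators. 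The paper picks large new $b_j,b_k$, asks $\varphi_e$ whether they are in $H_e$, and in either case forces a contradiction: if $b_j\notin H_e$, declare $b_j=Nx$ for a huge $N$ (so $b_j$ should lie in the subgroup $H_e$); if $b_j,b_k\in H_e$, declare $b_j=Ny$, $b_k=My$ with $\gcd(N,M)=1$ and $N,M$ huge (so $y\in\langle b_j,b_k\rangle\subseteq H_e$). Choosing $N,M$ larger than anything seen so far guarantees these new relations are consistent with the finite diagram already built, and a standard finite-injury bookkeeping keeps infinitely many $b_i$ independent so the limit is free of rank~$\omega$.
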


\begin{proof}
We build a computable presentation $G$ of the free abelian group of rank $\omega$.
We will initially begin with building $G$ upon the basis $B = (b_i)_{i \in \omega}$ which freely generates 
$G$. However, at a later stage we may declare $b_i \in  \langle  b_j : j< i \rangle $ for some $i$.
We will make sure that for an infinite set of indices $I \in 
\Delta^0_2$, $\{b_i : i \in I\}$ remains independent, and so
$G$ is freely generated by  $\{b_i : i \in I\}$.

\subsubsection{Introducing new relations}\label{sss:sss} Before we proceed to the requirements and explain the basic strategy,
we briefly explain the folklore technique in computable abelian group theory that can be traced back to Mal'cev~\cite{Mal}. The technique
was modified and used in many results including, e.g., \cite{Nurt, dobr:81, HMM, GLSOrdered, mel1} and, more recently, by the authors in \cite{lupini}.
We will need only the most basic version of this technique.

In the notation of the proof, at a stage $s$ we will have a finite partial group $G_s$ consisting of
linear combinations of the form 
$$\sum_{i \leq s} m_i b_i, $$
where $| m_i | \leq s.$ The operations $+$ and $-$ on these linear combinations will be declared naturally as well, assuming the result is also a linear combination of this form.
(If not, we delay the definition until a later stage.)  We need to also declare which linear combinations are equal (and which are not equal) in $G_s$.

At  stage $s+1$, according to the instructions  of our strategies,  we may want to declare $b_n \in \langle b_i: i \neq n \rangle $ for some $n \leq s$.
To simplify notation, assume $n=s$ and furthermore we need to set $b_n \in \langle x \rangle$ for some fixed non-zero $x \in  \langle b_i: i < s \rangle \cap G_s$.  We also assume that no further relations are imposed on $b_i, i <s$. 

The potential tension is as follows.
We would like to identify the elements of $G_s$ with an initial segment of $\omega$, so that  different indices correspond to unequal elements in $G_s$ and, indeed, in $G$.  For that, we must make sure that 
the inclusion  $$G_s \subseteq G_{s+1}$$ induces an isomorphic embedding of partial structures. Only in this case we can safely set $G = \bigcup_s G_s$ and claim that $G$ is computable.
For example, if  $b_s \neq 3x$ in $G_s$ then we cannot possibly   declare  $b_s = 3 x$ in $G_{s+1}$. Also, in $G_s$, we cannot declare any relation that would imply $b_s = 3 x$.
(Equivalently, we must make sure that $G$ is isomorphic to the quotient  of the free abelian group upon $(b_i)_{i \in \omega}$ by its computable subgroup, and not merely a c.e.~subgroup.)

To circumvent this difficulty, we fix a very large natural number $M$ greater than any number seen so far in the entire construction and declare
$$b_s = M x.$$

Note that $G_s$ does not include any linear combination with coefficients so large. Since $M$ is very large, we claim that
$$\sum_{i \leq s} m_i b_i   = \sum_{i \leq s} n_i b_i   \iff \sum_{i < s} m_i b_i  + m_s M x = \sum_{i < s} n_i b_i + n_s M x,$$
whenever $|n_i|, |m_i| \leq s$. The left-to-right implication is obvious. 
To see why the converse implication $\Leftarrow$ holds, let $$x = \sum_{i < s} k_i b_i,$$
where $|k_i| \leq s$ and at least one coefficient $k_i \neq 0$; w.l.o.g.~$i=0$. (This is where we use that $x \neq 0$.) We have that 
$$\sum_{i < s} (m_i   + m_s   k_i M) b_i =  \sum_{i < s} (n_i   + n_s   k_i M) b_i.$$
Since we assumed no further relations hold upon $b_i, i<s$, it must be that for all $i<s$,
$$m_i   + m_s   k_i M = n_i   + n_s   k_i M.$$
Since $M$ is very large,  $m_i = n_i $ for all $i < s$, and thus in particular $m_0 = n_0$.
It follows that $m_s k_0 M = n_s k_0 M$, and since $k_0 \neq 0$, we arrive at $m_s = n_s$, showing~$\Leftarrow$.

In summary, by setting $b_s = M x$, where $M$ is very large, we can always guarantee that the finite part (of the open diagram of) $G_s$ of  $G$
built by stage $s$ is preserved. Of course,  the procedure described above can be  iterated
for other $x'$ and $b_j$.

\subsubsection{The requirements} We identify the domain of $G$ with the set of natural numbers, and we identify computable subsets of $G$
with their characteristic functions. Let $H_e$ be a (potential) computable subset of $G$ with characteristic function $\varphi_e$.
We must meet the following requirements:

\

$R_{e}: \,\, H_e \mbox{ does not represent a proper subgroup of } G. $

\

\noindent (Of course,  ``$H_e$ represents a proper subgroup of $G$'' means that $\varphi_e$ is total and is a characteristic function 
of a subgroup of $G$ which is neither $\{0\}$ nor is equal to $G$.)

\subsubsection{The strategy for $R_{e}$} 
Recall that $(b_i)_{i \in \omega}$ are generators of the group; furthermore this set contains
a subset that freely generates the group. Without loss of generality, we may assume that the neutral element $0$ of $G$ has index $0$ in the indexing of the elements by natural numbers.
The strategy goes through the following substeps. 

\begin{enumerate}
\item Wait for some nonzero $x \in H_e$ and $y \notin H_e$. (If this never happens, either $H_e = G$ or $H_e$ is partial.)

\item Pick $b_j, b_k$ with thier indices $j, k>3e$ larger than any number seen so far in the construction, and check whether $b_j, b_k\in H_e$.
Assuming $\varphi_e$ converges, there are two cases:

 \begin{enumerate}
       \item[Case 1]  $b_j \notin H_e$ or $b_k \notin H_e$. Say, $b_i \notin H_e$.  Declare $b_j = n x$, where $n$ is larger than any number used in the definition of $G$ so far.
       (Since  $x \in H_e$ in $(2)$,  we have diagonalised.)

         \item[Case 2]  $b_j, b_k \in H_e$. In this case pick $n$ larger than any number used in the definition of $G$ so far, and pick $m$ much larger than $n$
         so that $GCD(m,n) =1$. Declare $ b_j = n y$ and $b_k = m y$.     (Since $GCD(n,m)=1$ it has to be that $y \in  \langle m y, n y \rangle$ and $y \in H_e$; we have diagonalised.)
        
\end{enumerate}

        \end{enumerate}

Note that in each case,  we have guaranteed that $R_e$ is met.  As explained in Subsection~\ref{sss:sss}, the choices of $m$ and $n$ guarantee that the diagram of 
the finite part of $G_s$ is preserved when we introduce new relations in (2).



\subsubsection{Putting the strategies together} In the construction, we keep building $G = \bigcup_{s \in \omega} G_s$ to be $ \langle b_i : i \in \omega \rangle$, taking into account the 
relations on the generators $b_i$ which can be introduced by the strategies. At a stage $s$, $G_s$ consists of sums of the form $\sum_i m_i b_i$; we can arrange 
the indices so that any any such sum, $|m_i| \leq s$ and $i \leq s$. 

We arrange the strategies into a priority order, so that $R_{e}$ is declared higher priority than $R_{e'}$ if $e < e'$. 
If at some stage $R_e$ declares some element $b_j$ or $b_k$  dependent on some other elements (according to its instructions in (2)), then we say that $R_e$ \emph{acts}
at the stage; in this case we also initialise all strategies of weaker priority that $R_e$ by setting all its parameters undefined.  By induction, we obtain that every $R_e$ can be initialised only at most  finitely many times, and thus each $R_e$ is eventually met.
The choice of indices $i, j > 3e$ (which are also additionally chosen bigger than any number seen so far)  guarantees that at least $e$-many of the elements $\{b_s : s \leq 3e\} $ will never be declared dependent on each other. This is because at most $e$-many strategies can have their witnesses $b_j$ and $b_k$ with $j, k < 3e$; later, if they are initialised, their witnesses will
be chosen too large. 
Consequently, we have that the rank of $ \langle b_i : i \in \omega \rangle$ is infinite. Also, we never introduce any elements to $G$ that would not be generated by $(b_i)_{i \in \omega}$, and thus
$G = \langle b_i : i \in \omega \rangle$ is free abelian of rank $\omega$ freely generated by some subset of $(b_i)_{i \in \omega}$.
 \end{proof}

\subsection{Finalizing the proof of Theorem~\ref{main:thm}(2)} By Theorem~\ref{thm:notsimple}, there exists a computable presentation $G$ of the free abelian group of countably infinite rank that is computably simple. Clearly, it has uncountably many proper subgroups. Fact~\ref{fact:fff} implies that the group can be viewed a computably locally compact that has no non-trivial computable closed subgroups. By Lemma~\ref{lem:points}, its Chabauty space $\mathcal S(G)$ will have uncountably many points, but only two computable points, specifically $G^*$ and $\{0\}^*$. Thus, $\mathcal S(G)$ does not have a computable dense sequence (in $\mathcal K(G^*)$). This finishes the proof of Theorem~\ref{main:thm}(2).

We have constructed a \emph{fixed} computable presentation $G$ of the free  abelian group $\mathbb{Z}^{< \omega}$ so that $\mathcal S(G)$ is not computable in $\mathcal K(G^*)$.
Clearly, the `natural' computable presentation of this group does not have this pathological property. 
We leave open whether    there exists  a computable discrete group $G$ so that for \emph{each} computable presentation $H$ of $G$,
$\mathcal S(H)$ is not computable in $\mathcal K(H^*)$.
We also wonder if  there exists a reasonable description of computable discrete abelian groups that admit computably simple presentations.


\section{The Chabauty space of a t.d.l.c.\ group via its  meet groupoid}\label{sec:mimi}

In this section we establish an effective correspondence between the Chabauty space $\mathcal S(G)$  of $G$ and the meet groupoid $\mathcal{W}(G)$ of all compact open subsets of a totally disconnected locally compact (t.d.l.c.) group $G$, which is defined below.

{Recall that a groupoid   is  given by a domain $\+ W$ on which     a unary operation $(.)^{-1}$ and a partial binary operation, denoted  by ``$\cdot $", are defined. These operations satisfy the following conditions:
 \bi \item[(a)]  $(A \cdot B)\cdot C= 
 A \cdot (B\cdot C)$,  with either both sides or no side defined;  \item[(b)]  $A\cdot A^{-1}$ and $A^{-1}\cdot A$ are always defined; \item[(c)] if $A\cdot B$ is defined then $A\cdot B\cdot B^{-1}=A$ and $A^{-1}\cdot A\cdot  B =B$.\ei } 
 
 {A \emph{meet groupoid}~\cite{tdlc} is a groupoid  $(\+ W, \cdot , {(.)}^{-1})$ that is also a meet semilattice  $(\+ W, \cap  ,\ES)$ of which  $\ES$ is the  least element.     Writing $A \sub B \LR A\cap B = A$ and letting the operation $\cdot$ have preference over $\cap$,
it satisfies the conditions   \bi \item[(d)]   $\ES^{-1} = \ES = \ES \cdot \ES$,    and  $\ES \cdot A$ and $A \cdot \ES$ are undefined for each $A \neq \ES$,  
\item[(e)]   if $U,V$ are idempotents such that $U,V \neq \ES$, then   $U  \cap V \neq \ES$,  \ei

 \bi \item[(f)] $A \sub B \LR A^{-1} \sub B^{-1}$, and

\item[(g)]   if  $A_i\cdot B_i$ are defined ($i= 0,1$) and $A_0 \cap A_1 \neq \ES \neq B_0 \cap B_1$, then \bc $(A_0  \cap A_1)\cdot (B_0 \cap B_1) =  A_0 \cdot  B_0 \cap A_1 \cdot B_1 $. \ec   
 \ei}

\begin{definition}\cite{tdlc} Let $G$ be a t.d.l.c.\  group.  We define the  meet groupoid  $\+ W(G)$ of $G$, as follows.  Its  domain consists of  the compact open  cosets in $G$ (i.e., cosets of compact open subgroups of $G$), as well as the empty set.  
 We define $A\cdot B$ to be the usual product $AB$ in case that  $A= B= \ES$, or $A$ is a left  coset of a subgroup $V$ and $B$ is a right  coset of $V$; otherwise $A \cdot B$ is undefined.  \end{definition}
 Notice that the groupoid defined above also carries a natural partial order $\subseteq$ of set-theoretic inclusion under which it is also forms a meet semilattice, hence the name.
It has been established in \cite{tdlc} that a 
 duality holds between t.d.l.c.~groups and their respective meet groupoids. This duality is also fully effective in the sense that
it gives a computable $1$-$1$ correspondence between the respective computable meet groupoids and computably t.d.l.c.~groups.

\subsection{Computable setting}\label{subsec:tdlc} A separable t.d.l.c.~group is homeomorphic to a locally compact subspace of $\omega^{\omega}$. It therefore makes sense to represent such groups using trees.
Our (rooted) trees are viewed as sets of strings in $\omega^{ <\omega}$ closed under prefixes. Finite strings can be viewed as `nodes' of the tree.
A tree has no dead ends if every finite string on the tree is extendible to an infinite path through the tree.
The space of paths through a tree $T$ is denoted $[T]$. The space ${T}$ is a metric space under the shortest common initial segment ultrametric.
A computable tree $T$ with no dead ends evidently induces a computable Polish presentation on~$[T]$.

\begin{definition}\label{def:nicelc}
We say that a computable tree is \emph{nicely} computably locally compact if it has no dead ends, only its root can be $\omega$-branching, and
given a node 
one can   compute the number of its successors.
\end{definition}

Clearly this implies that  $[T]$ is a computably locally compact space.
\begin{definition}[\cite{tdlc}]\label{def:main1}
       Let $G$ be a Polish t.d.l.c.~group. A \emph{computable Baire presentation} of $G$
    is a topological group $\hat G \cong G$ of the form  $\hat G= ([T], \cdot, ^{-1})$ such that
  \begin{enumerate} \item  $T$ is a nicely effectively  locally compact tree;
  \item  $\cdot\colon [T] \times [T] \to [T]$ and $^{-1} \colon [T] \to [T]$ are computable (as operators).
 \end{enumerate}
  \end{definition}

We have already mentioned the following  result:

\begin{theorem}[M. and Ng~\cite{separ}, Thm.\ 1.2.] \label{thm:mn} For a t.d.l.c.\ group~$G$, the following are equivalent:
\begin{enumerate}
\item $G$ has a computably locally compact presentation, and

\item  $G$ has a computable Baire presentation.

\end{enumerate}

\end{theorem}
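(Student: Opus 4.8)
\textbf{Proof proposal for Theorem~\ref{thm:mn}.}

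The plan is to establish the two implications separately, with the direction (2)$\Rightarrow$(1) being essentially immediate and (1)$\Rightarrow$(2) requiring the real work. For (2)$\Rightarrow$(1): if $G$ has a computable Baire presentation $\hat G = ([T], \cdot, {}^{-1})$ with $T$ a nicely effectively locally compact tree, then as remarked after Definition~\ref{def:nicelc}, the space $[T]$ is computably locally compact --- given (the name of) a path $x$, read off a long enough initial segment $\sigma \prec x$, and since one can compute the number of successors of every node above $\sigma$, the cylinder $[\sigma] \cap [T]$ is a computably compact clopen set containing the basic open neighbourhood $[\sigma]\cap [T]$ of $x$. The group operations are computable by hypothesis, so this is already a computably locally compact presentation in the sense of Definition~\ref{def:compgrdef1}.

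The substantive direction is (1)$\Rightarrow$(2). Here I would start from a computably locally compact presentation $(M,\rho,\cdot,{}^{-1})$ of $G$ and exploit total disconnectedness: since $G$ is t.d.l.c., it has a neighbourhood basis of the identity consisting of compact open subgroups (van Dantzig). The first step is to extract, effectively, such a compact open subgroup $U_0$; using the computable locally compact structure applied to (a computable name converging to) the identity $e$, one obtains a computably compact set $K \supseteq B \ni e$, and by Proposition~\ref{prop:ball} one may take $K = B^c(\xi,r) = \overline{B(\xi,r)}$ a computable closed ball. From this compact open neighbourhood one then needs to manufacture a genuine compact open \emph{subgroup}: intersect finitely many translates, or take the subgroup generated inside $K$ --- the point being that $\langle K\rangle \cap K'$ for a slightly larger compact $K'$ stabilises because $G$ is t.d.l.c., and this stabilisation can be detected. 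This step should be available from the authors' earlier work \cite{tdlc,separ}, which I would cite.

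Once a computable compact open subgroup $U_0$ is in hand, the construction of the tree $T$ proceeds by building a descending chain $U_0 \supseteq U_1 \supseteq U_2 \supseteq \cdots$ of compact open subgroups with $\bigcap_n U_n = \{e\}$, each of finite index in the previous one, all uniformly computable; the tree $T$ then has as its level-$n$ nodes the cosets of $U_n$ (with the root $\omega$-branching to account for the possibly infinitely many cosets of $U_0$ in $G$, reached via the $\sigma$-compactness exhaustion $G = \bigcup_n K_n$ from Lemma~\ref{lemma:sigma}), a node at level $n$ being a successor of a node at level $n-1$ when the corresponding coset of $U_n$ is contained in the corresponding coset of $U_{n-1}$. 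One must check: the number of successors of each non-root node is finite and computable (this is $[U_{n-1}:U_n]$, which one arranges to be computable); the tree has no dead ends (automatic, cosets are nonempty); the map sending $g \in G$ to the path $(gU_n)_{n}$ is a computable homeomorphism onto $[T]$ (continuity and openness follow because the $U_n$ form a neighbourhood basis and each is compact open, and effectivity follows from the uniform computability of the $U_n$ together with the computable compactness of $K_n$, which lets one decide coset membership); and finally that multiplication and inversion transport to computable operators on $[T]$, which follows from their computability on $M$ together with the computable homeomorphism.

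The main obstacle I anticipate is the effective construction of the descending chain $(U_n)$ of compact open subgroups with computable indices and trivial intersection, uniformly in $n$ --- in particular, verifying that at each stage one can \emph{effectively find} the next compact open subgroup properly inside the current one and of finite, computably-bounded index, and that one can drive the diameters to zero so that $\bigcap U_n = \{e\}$. This requires combining the computable locally compact structure, van Dantzig's theorem made effective, and careful bookkeeping with the $\sigma$-compact exhaustion; I would lean heavily on the machinery of \cite{tdlc} and \cite{separ} for this, since Theorem~\ref{thm:mn} is precisely their result and the excerpt signals that the details "will be explained" by reference to those papers.
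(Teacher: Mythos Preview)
The paper does not prove Theorem~\ref{thm:mn}; it is quoted as an external result of Melnikov and Ng~\cite{separ}, and the paper only remarks that the implication $(2)\Rightarrow(1)$ is obvious while $(1)\Rightarrow(2)$ produces a computable Baire presentation that is \emph{computably homeomorphic} to the given computably locally compact copy. Your proposal is consistent with this: you correctly isolate $(2)\Rightarrow(1)$ as immediate from Definition~\ref{def:nicelc}, and your sketch of $(1)\Rightarrow(2)$ --- extract an effective compact open subgroup via an effective van Dantzig argument, build a descending chain $(U_n)$ of compact open subgroups with trivial intersection and computable indices, then form the tree of cosets --- is the expected architecture, and indeed is essentially the approach of \cite{tdlc,separ} that you defer to. Since there is no proof in the present paper to compare against, your proposal is as good a reconstruction as the paper allows; if anything, you should make explicit (as the paper stresses) that the resulting homeomorphism and its inverse are both computable, since this is precisely what the paper uses downstream.
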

The implication $(2) \rightarrow (1)$ in the theorem above is of course obvious. It is important for us that $(1) \rightarrow (2)$ is witnessed by  a computable Baire~presentation of $G$ that is \emph{computably homeomorphic} to the given computably locally compact copy of the group. (Recall that  this in particular  means both the homeomorphism $f$ and its inverse $f^{-1}$ are computable.) Thus, the closed subgroups of these presentations will be in a $1$-$1$ effective correspondence induced by this effective homeomorphism. 
It follows that, in the special case of t.d.l.c.~Polish groups we can use the rather convenient Definition~\ref{def:nicelc} above in place of the  (seemingly) more general notion of a computably locally compact presentation. In particular, for a fixed computable tree with no dead ends, the notions of a computable open and closed sets become rather explicit:

\begin{definition}  An open subset $R$ of $[T]$ is called computable if $\{\sigma \in T \colon \, [\sigma] \sub R\}$ is computable. 
 A closed subset $S$ of $[T]$ is called computable if $[T]-S$ is computable. Equivalently,   the  subtree $\{\sigma \in T \colon \, [\sigma] \cap H \neq \ES\}$ corresponding to $S$ is computable. \end{definition} 

For the dual meet groupoid, we shall use the following:

 \begin{definition}\cite{tdlc} A meet groupoid $\+ W$ is called \emph{Haar computable}~if 
\bi \item[(a)]  its  domain is (indexed by) a computable subset $D$ of $\NN$; 

\item[(b)] the   groupoid and meet operations    are computable; in particular, the relation $\{ \la x,y \ra\colon \, x,y \in D \lland x\cdot y \text{ is defined}\}$   is computable;
 
 \item[(c)]     the partial function with domain contained in  $D \times D$ sending  a pair of subgroups $U, V\in \+ W $ to $|U:U\cap V|$ is   computable.   \ei
    \end{definition}

The key result relating the definitions above is the following:

 \begin{theorem}[\cite{tdlc}]  \label{thm:ma}  A group   $G$  has a computable  Baire~presentation if, and only if, its  meet groupoid $\mathcal{W} (G)$ has a Haar computable copy. 
\end{theorem}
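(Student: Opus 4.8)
The plan is to establish both directions of the equivalence by transferring between the tree-based description of a t.d.l.c.\ group and the combinatorial/algebraic description of its meet groupoid, using the duality of \cite{tdlc} as a black box and keeping track of effectivity on both sides. The statement has essentially been announced in \cite{tdlc}; what we must supply here is the observation that the duality is \emph{effective}, i.e.\ that a Haar computable copy of $\+ W(G)$ can be produced from a computable Baire presentation of $G$ and conversely. So the proof naturally splits into two constructions, $(\Rightarrow)$ and $(\Leftarrow)$, and the bulk of the work is bookkeeping: checking that the operations in the definition of ``Haar computable'' are computable given the tree data, and that the tree of a reconstructed group is nicely computably locally compact given the groupoid data.

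For the direction $(\Rightarrow)$, suppose $\hat G = ([T],\cdot,^{-1})$ is a computable Baire presentation, with $T$ nicely computably locally compact. First I would show that the compact open subgroups of $G$ can be effectively enumerated: since $T$ has no dead ends and branching is computable, each clopen subset of $[T]$ of the form $\bigcup_{\sigma\in F}[\sigma]$ for a finite $F\subseteq T$ is a computable compact open set, and one can decide (uniformly in $F$) whether such a set is a subgroup by testing closure under $\cdot$ and $^{-1}$ on the finitely many relevant cylinders, using effective openness of the group operations (Fact~\ref{fact:open}) together with computability of $\cdot$ and $^{-1}$. Having enumerated the compact open subgroups, one enumerates their (compact open) cosets; two such finite-union descriptions denote the same coset iff a decidable cylinder condition holds, so the domain $D\subseteq\NN$ of $\+ W(G)$ can be taken computable. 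Then the groupoid product $A\cdot B$ (defined exactly when $A$ is a left coset and $B$ a right coset of a common $V$) is computed by first checking the matching-$V$ condition and then computing the set product, which is again a finite union of cylinders obtainable via effective openness of multiplication; the meet $\cap$ is just intersection of finite unions of cylinders, hence computable; and the index $|U:U\cap V|$ is computed by finding a finite $2^{-n}$-cover fine enough to separate cosets of $U\cap V$ inside $U$, which terminates because $U$ is computably compact and $U\cap V$ is a computable open subgroup of finite index. This yields conditions (a)--(c) of Haar computability.

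For the direction $(\Leftarrow)$, suppose $\+ W$ is a Haar computable meet groupoid isomorphic to $\+ W(G)$. Here I would invoke the (purely algebraic) reconstruction of $G$ from $\+ W(G)$ established in \cite{tdlc} and check it can be carried out effectively: the idempotents of $\+ W$ that are nonzero are exactly the compact open subgroups, and by (e) they form a filter base, so one picks a computable decreasing cofinal sequence $(V_k)$ of them with $V_0$ a maximal one; the group $G$ is recovered as the inverse limit / completion of the coset tree whose level-$k$ nodes are the cosets of $V_k$, with the tree order given by inclusion (computable by (b)) and the branching at a $V_k$-coset equal to the index $|V_k:V_{k+1}|$ (computable by (c)). Since only the root is $\omega$-branching (or the whole tree is finitely branching), and successor counts are computable, the resulting tree $T$ is nicely computably locally compact; the group operations on $[T]$ are computed by performing, at each finite level, the groupoid product of the corresponding cosets, which is computable by (b). Composing the resulting computable Baire presentation with the computable homeomorphism from Theorem~\ref{thm:mn} transfers it, if desired, to a computably locally compact presentation, but a computable Baire presentation is already what is asserted.

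The main obstacle I expect is the index/branching computation and, relatedly, ensuring the recovered tree is \emph{nicely} computably locally compact rather than merely computably locally compact: one must choose the cofinal sequence $(V_k)$ of compact open subgroups so that passing from $V_k$ to $V_{k+1}$ is ``one level'' in a way that keeps successor counts uniformly computable and keeps all $\omega$-branching concentrated at the root. On the $(\Rightarrow)$ side the analogous subtlety is that naively listing all finite unions of cylinders gives the same coset many times, so one needs the (decidable) normalization of coset descriptions to get a genuinely computable domain $D$ and a computable equality relation on it. Both of these are handled by the strong computable compactness machinery already available (Proposition~\ref{prop:ball}, Lemma~\ref{lemma:sigma}, and the strong intersection-decidability of the canonical covers quoted from \cite{EffedSurvey}), so the argument is a matter of assembling these pieces carefully rather than introducing a genuinely new idea.
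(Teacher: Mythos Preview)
The paper does not contain a proof of Theorem~\ref{thm:ma}: it is stated with a citation to \cite{tdlc} and used as a black box. So there is no ``paper's own proof'' to compare your attempt against. What the present paper adds is only the remark following the theorem, that the passage between a computable Baire presentation and the Haar computable meet groupoid is effective in the strong sense that elements of $\+ W(G)$ correspond bijectively and computably to compact open cosets in~$[T]$.

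That said, your sketch is broadly along the lines of how the result is established in \cite{tdlc}, and the effective bookkeeping you describe (deciding whether a finite union of cylinders is a subgroup or a coset via computable compactness and effective openness of the operations, computing indices by refining covers, and reconstructing the tree from a cofinal chain of idempotents) is the right kind of argument. Two small caveats: in the $(\Leftarrow)$ direction you should not ask for a \emph{maximal} compact open subgroup~$V_0$, since none need exist (already in the discrete case); one merely fixes any idempotent of $\+ W$ and uses condition~(c) to compute branching. Also, your appeal to Theorem~\ref{thm:mn} at the end is unnecessary, since you have already produced a computable Baire presentation directly.
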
 

This  effective duality result also enjoys a number of further effective properties. 
For instance, the elements of the Haar computable copy of $\mathcal{W} (G)$ produced based on a Baire presentation of $G$ are in an effective $1$-$1$ correspondence with 
computable compact open cosets of the respective computable  Baire~presentation, and vice versa.
Note however that $\mathcal{W} (G)$ captures only \emph{compact} open subgroups. What about closed subgroups of $G$? How are they reflected in $\mathcal{W} (G)$? We answer this question (non-effectively) in the following section, and then in the subsequent section we establish an effective correspondence in order to prove Theorem~\ref{thm:gr}.



\subsection{Ideals in a meet groupoid}

   
 
 
 

To this end,  $G$ will always denote a t.d.l.c.\ group, and $\mathcal W= \+W(G)$ its meet groupoid. 
Recall that the compact open cosets form a basis for the topology of $G$.  For $A, B_1, \ldots, B_n \in \mathcal W$, the relation $A \sub \bigcup_iB_i$ is  first-order definable in    $\mathcal W$,. To see this, note that   its complement is given by the formula 
  $\exists C \sub A  \bigwedge_i [C \cap B_i= \ES]$. So in $\mathcal W$ as an abstract structure we can define its set of ideals in the following sense.

\begin{definition}\label{def:id} We say that a set $\mathcal J \sub \mathcal W$ is an \emph{ideal} of $\mathcal W$ if  $\ES \in \mathcal W$ and \bc $(B_1, \ldots, B_n \in \mathcal J \lland A \sub \bigcup_iB_i)$ $\RA$  $A \in \mathcal J$. \ec \end{definition} 
\n Similar to Stone duality between Boolean (Stone) spaces and Boolean algebras, open subsets $R$ of $G$ naturally correspond to ideals $\mathcal J$  of $\mathcal W$ via the maps

\bc $R \mapsto  \{ A \colon \, A \sub R\}$ and $\mathcal J \mapsto  \bigcup \mathcal J$. \ec
We also  write  $S_\mathcal J = G - \bigcup \mathcal J$, which is   a closed set in $G$.

\begin{definition} \label{df:ideals} Let $\mathcal J$ be an ideal closed under inversion $A \mapsto A^{-1}$.

\n (i)  We say that   $\mathcal J$ is an \emph{open-subgroup ideal} if  
\bc $A , B \in \mathcal J \lland  A\cdot B$ defined  $\RA$ $A \cdot B \in \mathcal J$. \ec

\n (ii) We say that   $\mathcal J$ is a \emph{closed-subgroup ideal} if 
\bc $A\cdot B$ defined $\lland A\cdot B \in \mathcal  J$ $\RA$ $A \in \mathcal J  \lor B \in \mathcal J$. \ec
  \end{definition} 
Note that the set $\mathcal {CSI}(\mathcal W)$ of closed-subgroup   ideals   is closed in the product topology on $\mathcal P(\mathcal W)$, and hence totally disconnected.  It is immediate that  $\mathcal J$ is an open-subgroup ideal if, and only if,   $R_\mathcal J$ is an (open)  subgroup of~$G$. For closed-subgroup ideals, the analogous fact holds:
\begin{lemma}\label{lemma:Sj} $\mathcal J$ is closed-subgroup ideal $\LR$ $S=S_\mathcal J$ is a closed  subgroup of $G$. \end{lemma}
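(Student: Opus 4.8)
The statement is a biconditional relating a purely algebraic condition on an ideal $\mathcal J \sub \mathcal W$ (closed under inversion) to a topological/group-theoretic condition on the associated closed set $S = S_{\mathcal J} = G - \bigcup \mathcal J$. I would first record the basic dictionary: since $\mathcal J$ is an ideal, $R_{\mathcal J} = \bigcup \mathcal J$ is open, $S$ is closed, and a compact open coset $A$ satisfies $A \in \mathcal J$ iff $A \sub R_{\mathcal J}$ iff $A \cap S = \ES$; equivalently $A \notin \mathcal J$ iff $A \cap S \neq \ES$. The key point to exploit is that the compact open cosets form a basis for the topology of $G$, so every point $g \in G$ has arbitrarily small compact open coset neighbourhoods, and in fact (using a compact open subgroup $V$) the cosets $gV$ shrink to $g$ as $V$ shrinks. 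Thus $g \in S$ iff for every (equivalently, for arbitrarily small) compact open coset $A \ni g$ we have $A \notin \mathcal J$.

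\textbf{Direction ($\Leftarrow$).} Suppose $S$ is a closed subgroup. I must show $\mathcal J$ is a closed-subgroup ideal, i.e. if $A \cdot B$ is defined and $A\cdot B \in \mathcal J$ then $A \in \mathcal J$ or $B \in \mathcal J$. Contrapositively, assume $A \notin \mathcal J$ and $B \notin \mathcal J$, so $A \cap S \neq \ES$ and $B \cap S \neq \ES$; I want $A\cdot B \cap S \neq \ES$, i.e. $A\cdot B \notin \mathcal J$. Here $A$ is a left coset $gV$ and $B$ a right coset $Vh$ of the same compact open subgroup $V$, and $A \cdot B = gVh$. Pick $a \in A \cap S$ and $b \in B \cap S$; then $a \in gV$, $b \in Vh$, so $a^{-1}g \in V$ and $hb^{-1}\in V$, whence $ab \in g(a^{-1}g)^{-1}\cdots$ — more cleanly, $ab \in (gV)(Vh) = gVh = A\cdot B$ since $a\in gV$ and $b \in Vh$ gives $ab \in gV\cdot Vh = gVh$. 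And $ab \in S$ because $S$ is a subgroup containing $a,b$. Hence $ab \in A\cdot B \cap S$, so $A \cdot B \notin \mathcal J$, as required. (I should double-check the coset arithmetic $gV \cdot Vh = gVh$, which holds because $VV = V$.)

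\textbf{Direction ($\Rightarrow$).} Suppose $\mathcal J$ is a closed-subgroup ideal (closed under inversion). I must show $S$ is a subgroup; it is automatically closed and nonempty needs checking — actually I should verify $e \in S$: since $\mathcal J$ is a proper ideal in the relevant sense, no compact open subgroup lies in $\mathcal J$ (if $V \in \mathcal J$ were a subgroup then $V = V\cdot V \in \mathcal J$ forces nothing, but the closed-subgroup condition applied to $V\cdot V = V \in \mathcal J$ just gives $V \in \mathcal J$; the real point is whether $\mathcal J = \mathcal W$ is allowed — I will need to confirm from the definitions whether $S \neq \ES$ is part of the claim or whether $\mathcal J = \mathcal W$, giving $S = \ES$, is vacuously a "closed subgroup" in the authors' convention, presumably not, so likely they intend $\mathcal W \notin$ the relevant class, or the empty set counts; I will follow whatever convention is fixed earlier). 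Granting $e \in S$: to show closure under the operation $(x,y) \mapsto xy^{-1}$, take $x, y \in S$ and suppose toward a contradiction $xy^{-1} \notin S$, i.e. $xy^{-1} \in R_{\mathcal J}$, so there is a compact open coset $C \in \mathcal J$ with $xy^{-1} \in C$. Using the basis of compact open cosets and continuity of multiplication and inversion, choose a compact open subgroup $V$ small enough that, writing $A = xV$ (a left coset of $V$) and $B = Vy^{-1}$... I need to arrange $A \cdot B$ defined and $A\cdot B \sub C$, so that $A \cdot B \in \mathcal J$; then the closed-subgroup property gives $A \in \mathcal J$ or $B \in \mathcal J$, i.e. $xV \in \mathcal J$ or $Vy^{-1}\in \mathcal J$. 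The first contradicts $x \in S$ (since $x \in xV$ and $A\in\mathcal J$ would mean $xV \cap S = \ES$); the second: $Vy^{-1} = (yV)^{-1}$, and $\mathcal J$ is closed under inversion, so $yV \in \mathcal J$, contradicting $y \in S$. To make $A \cdot B \sub C$: note $A \cdot B = xVy^{-1} \ni xy^{-1}$, and $xVy^{-1}$ shrinks to $\{xy^{-1}\}$ as $V$ shrinks (conjugation-type continuity), so for small enough $V$ it is contained in the open set $C$; since $C$ is a union of compact open cosets of its subgroup, containment of the coset $xVy^{-1}$ in $C$ can be arranged, and then $A\cdot B \in \mathcal J$ because $\mathcal J$ is an ideal. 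I also need $A \cdot B$ defined, i.e. $A = xV$ is a left coset of $V$ and $B = Vy^{-1}$ is a right coset of $V$ — true by construction. This yields the contradiction and shows $xy^{-1}\in S$.

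\textbf{Main obstacle.} The delicate point is the quantitative ``shrinking'' argument in direction ($\Rightarrow$): producing a single compact open subgroup $V$ such that $xVy^{-1}$ is a compact open coset contained in the given compact open coset $C$, using that compact open subgroups form a neighbourhood basis of $e$ together with continuity of the group operations. One must be careful that $C$ is a coset of \emph{some} compact open subgroup $W$ and that $xVy^{-1}$, which is a coset of the (generally different) subgroup $x V x^{-1}$... wait, $xVy^{-1}$ as a set is $\{xvy^{-1} : v \in V\}$, which is a left coset $x(Vy^{-1}) $ — to see it as a coset of a genuine subgroup one writes it as $xy^{-1}\cdot (yVy^{-1})$, a coset of the compact open subgroup $yVy^{-1}$; so it is indeed in $\mathcal W$. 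Then containment $xy^{-1}(yVy^{-1}) \sub C$ follows by taking $V$ small since $C$ is open and $yVy^{-1}$ shrinks with $V$. I would also double-check the edge cases around whether $S$ can be empty and whether the authors' conventions force $e \in S$; everything else is routine coset bookkeeping using $VV = V$ and the fact that $A \in \mathcal J \iff A \cap S = \ES$ for compact open cosets $A$.
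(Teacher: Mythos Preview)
Your proposal is correct and follows essentially the same approach as the paper. The paper's proof of the $(\Leftarrow)$ direction is simply ``Immediate,'' which is exactly your contrapositive argument. For $(\Rightarrow)$, the paper treats closure under inversion and under multiplication separately (using $\hat A = gL$, $\hat B = Lh$ for a small compact open subgroup $L$), whereas you combine both into the single step $xy^{-1}$ via $A = xV$, $B = Vy^{-1}$; these are minor variants of the same continuity-plus-ideal-condition argument, and your bookkeeping about $A\cdot B$ being defined in the groupoid and $xVy^{-1}$ being a genuine coset is accurate.

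Your worry about the edge case $\mathcal J = \mathcal W$ (equivalently $S = \emptyset$) is legitimate and the paper does not address it either: its proof only verifies closure under inverse and product, which is vacuous when $S = \emptyset$. So you have not missed anything the paper supplies; this is a shared minor oversight (or an implicit convention that $\mathcal J \neq \mathcal W$), and everything else in your proposal is on target.
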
  \begin{proof} \lapf Immediate. 

\rapf Let $g,h \in G$. If $g^{-1} \not \in S$ then there is $A\in \mathcal J$ such that $g^{-1} \in A$. Since $\mathcal J$ is closed under inversion, this implies that $g \not \in S$.  

If $g, h \in S $ but $gh \not \in S$ then pick $C \in \mathcal J$ such that $gh \in C$. By continuity there are $A, B\in \mathcal W$ such that $g\in A, h \in B$ and $AB \sub C$ (the   product of subsets in $G$). Let $A$ be left coset of $U$ and $B$ be right coset of $V$. Let $L = U \cap V$. Replacing $A$ by $\hat A = gL$ and $B$ by  $\hat B = Lh$, we have $\hat A \cdot \hat  B \sub C$ and hence  $\hat A \cdot \hat  B \in \mathcal J$. But $\hat A \cap S \neq \ES \neq \hat B \cap S$, so neither $\hat A$ nor $\hat B$ are in $\mathcal J$, contrary to the assumption that $\mathcal J$ is a closed-subgroup ideal.
  \end{proof}

We now relate   the Chabauty space of a t.d.l.c.\ group~$G$ to its meet groupoid.
Using the notation of \cite[Section 2]{Cornulier:11}, a basic open set in $\mathcal S(G)$ has the form 
\begin{equation} \label{eqn: BO} \Omega(K; R_1, \ldots, R_n) = \{ U \le_c G \colon U \cap K = \ES \land  \forall i\le n \,  U \cap R_i  \neq \ES\},\end{equation}
where $K \sub G$ is compact, and the $R_i \sub G$ are open. 

\begin{proposition}\label{pr:prpr} The map $\Gamma \colon \mathcal {CSI}(\mathcal W) \to \mathcal S(G)$, given by \bc $\mathcal J \mapsto S_\mathcal J = G - \bigcup \mathcal J$,  \ec is a continuous bijection of compact spaces, and hence a homeomorphism. \end{proposition}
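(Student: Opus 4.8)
The proof breaks into three pieces: (a) $\Gamma$ is well-defined and a bijection; (b) $\Gamma$ is continuous; (c) conclude via the compactness–Hausdorff argument. Part (a) is essentially already done: Lemma~\ref{lemma:Sj} shows that $\mathcal J \mapsto S_{\mathcal J}$ sends closed-subgroup ideals to closed subgroups, so $\Gamma$ lands in $\mathcal S(G)$. For bijectivity I would use the Stone-type correspondence set up just before Definition~\ref{df:ideals}: the maps $R \mapsto \{A : A \subseteq R\}$ and $\mathcal J \mapsto \bigcup \mathcal J$ are mutually inverse between open subsets of $G$ and ideals of $\mathcal W$, because the compact open cosets form a basis for the topology of $G$ (so every open $R$ is the union of the basic open cosets it contains, and conversely $\mathcal J$ is recovered as all cosets inside $\bigcup\mathcal J$ precisely because $\mathcal J$ is an ideal — closure under $\subseteq\bigcup_i B_i$ with $n=1$ gives downward closure under $\subseteq$, and any coset $A\subseteq\bigcup\mathcal J$ is covered by finitely many members of $\mathcal J$ by compactness of $A$). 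Restricting this bijection to closed-subgroup ideals on one side and, by Lemma~\ref{lemma:Sj}, to closed subgroups on the other, and composing with $R \mapsto G - R$, yields that $\Gamma$ is a bijection.

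\textbf{Continuity.} This is the substantive step. I need to show the preimage under $\Gamma$ of a basic open set $\Omega(K; R_1,\ldots,R_n)$ (as in \eqref{eqn: BO}) is open in the product topology on $\mathcal{CSI}(\mathcal W) \subseteq \mathcal P(\mathcal W)$. Fix $\mathcal J_0$ with $S_{\mathcal J_0} \in \Omega(K; R_1,\ldots,R_n)$. The condition $S_{\mathcal J_0} \cap K = \ES$ means $K \subseteq \bigcup \mathcal J_0$; since $K$ is compact and $\bigcup\mathcal J_0$ is a union of (open) cosets, finitely many $A_1,\ldots,A_m \in \mathcal J_0$ already cover $K$. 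The set $\{\mathcal J : A_1,\ldots,A_m \in \mathcal J\}$ is a basic clopen neighbourhood of $\mathcal J_0$ in the product topology, and every $\mathcal J$ in it satisfies $K \subseteq \bigcup\mathcal J$, i.e.\ $S_{\mathcal J}\cap K = \ES$. For the condition $S_{\mathcal J_0} \cap R_i \neq \ES$: this says $R_i \not\subseteq \bigcup\mathcal J_0$, so there is a compact open coset $C_i \subseteq R_i$ with $C_i \notin \mathcal J_0$. The set $\{\mathcal J : C_i \notin \mathcal J\}$ is again basic clopen around $\mathcal J_0$, and for every such $\mathcal J$ we have $C_i \subseteq R_i$ with $C_i \cap S_{\mathcal J} \neq \ES$ (indeed $C_i \not\subseteq \bigcup\mathcal J$ since $C_i\notin\mathcal J$ and $\mathcal J$ is downward closed, so $C_i$ meets $S_{\mathcal J}$), hence $R_i \cap S_{\mathcal J} \neq \ES$. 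Intersecting these finitely many basic clopen neighbourhoods gives an open neighbourhood of $\mathcal J_0$ inside $\Gamma^{-1}(\Omega(K; R_1,\ldots,R_n))$.

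\textbf{Conclusion.} The space $\mathcal{CSI}(\mathcal W)$ is closed in $\mathcal P(\mathcal W)$ (noted right after Definition~\ref{df:ideals}), hence compact; $\mathcal S(G)$ is Hausdorff (it is a compact metrizable space, being embeddable in $\mathcal K(G^*)$ by Fact~\ref{fact:ChAl}). A continuous bijection from a compact space to a Hausdorff space is a homeomorphism, so $\Gamma$ is a homeomorphism. I expect the main obstacle to be the continuity argument — specifically, being careful that the defining conditions of $\Omega(K;R_1,\ldots,R_n)$ translate cleanly into finitary membership/non-membership conditions on ideals, which hinges on compactness of $K$ (for the first) and on the $R_i$ being open with the cosets forming a basis (for the rest); the bijectivity and the final topological step are routine.
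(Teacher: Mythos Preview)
Your proof is correct and follows essentially the same approach as the paper. The only cosmetic difference is that the paper first reduces WLOG to the case where each $R_i$ is itself a compact open coset (so one can use $R_i \notin \mathcal J$ directly), whereas you keep the $R_i$ general and pick witnessing cosets $C_i \subseteq R_i$ with $C_i \notin \mathcal J_0$; these amount to the same thing. Your treatment of bijectivity is more explicit than the paper's, which leaves that part to the Stone-type correspondence set up before Definition~\ref{df:ideals}.
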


\begin{proof} 
Fix a closed-subgroup ideal $\mathcal J$. To show that  $\Gamma$ is continuous at   $\mathcal J$, suppose that $S=\Gamma(\mathcal J) \in \Omega(K; R_1, \ldots, R_n) $,   a basic open set as defined in (\ref{eqn: BO}).  Since the compact open cosets form a basis of the topology of $G$, we   may assume that  each $R_i$ is a compact open coset.   Since $K \sub G-S$ which is open, there are compact open cosets $B_1, \ldots, B_m$ such that $K \sub \bigcup_k B_k \sub G-S$.   Let $\mathfrak L$ be  the basic open     set of           $\mathcal {CSI}(\mathcal W)    $ consisting of the subgroup ideals $\mathcal A$ such that $B_k \in \mathcal A$ for each $k\le m$, and $R_i \not \in \mathcal A$ for each $i \le n$. Clearly $\mathcal J \in \mathcal A$,  and                        $\mathcal H \in \mathfrak L$ implies                 $\Gamma(H) \in  \Omega(K; R_1, \ldots, R_n) $.
\end{proof}
The foregoing proposition yields another proof of the known fact that $\mathcal S(G)$ is    totally disconnected for a t.d.l.c.\ $G$~\cite{Cornulier:11}. In particular,  unless  $\mathcal S(G)$ has  isolated points, it  will be  homeomorphic to  Cantor space. 

\subsection{Concluding  the proof of Theorem~\ref{thm:gr}}  Let $G$ a computably locally compact t.d.l.c.~group.
Recall that Theorem~\ref{thm:gr}  states that $H \leqq_c G$ is computably closed if, and only if, 
  $H$ corresponds to a computable closed-subgroup ideal  in the (computable) dual meet groupoid $\mathcal{W}(G)$ of $G$.
  (This correspondence is described in the previous subsection.) We first prove the following lemma.
  

\begin{lemma} Suppose that a t.d.l.c.~$G$ is given as a computable Baire~presentation based on a tree $T$,
and let $\mathcal W$ denote the corresponding Haar computable copy of  $\mathcal W(G)$.  For an ideal $\mathcal J \sub \mathcal W$, 

\bc $\mathcal J $ is computable $\LR$ the open set $R_{\mathcal J}\sub [T]$ is computable. \ec 
\end{lemma}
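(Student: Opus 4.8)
The plan is to prove the equivalence by directly relating the computability of the set $\mathcal J \subseteq \mathcal W$ (as a subset of the computable index set $D$) to the computability of the open set $R_{\mathcal J} = \bigcup \mathcal J \subseteq [T]$, that is, to the computability of $\{\sigma \in T : [\sigma] \subseteq R_{\mathcal J}\}$. The bridge between the two sides is the effective correspondence from the duality results (Theorem~\ref{thm:ma} and the surrounding discussion): each element $A$ of the Haar computable copy $\mathcal W$ is a compact open coset of $[T]$, and \emph{uniformly in the index of $A$} one can compute a finite set of strings $\sigma \in T$ with $[A] = \bigcup \{[\sigma] : \sigma \in F_A\}$ for a canonical clopen representation $F_A$; conversely, given a string $\sigma \in T$ such that $[\sigma]$ happens to be a compact open coset, one can find its index in $\mathcal W$. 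I would first isolate this uniform back-and-forth translation between indices in $D$ and finite antichains of strings in $T$ as the technical engine of the proof.

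For the direction ``$\mathcal J$ computable $\Rightarrow$ $R_{\mathcal J}$ computable'': to decide whether $[\sigma] \subseteq R_{\mathcal J}$ for a given $\sigma \in T$, note that $[\sigma]$ is itself a compact open subset of $[T]$ (since $T$ is nicely computably locally compact, so $[\sigma]$ is compact once $\sigma$ is past the root). The set $R_{\mathcal J} = \bigcup\mathcal J$ is open, and $[\sigma] \subseteq R_{\mathcal J}$ iff, by compactness of $[\sigma]$, finitely many members of $\mathcal J$ already cover $[\sigma]$. Enumerating $\mathcal J$ (which we can, since it is computable) and the canonical clopen representations of its elements, we get a $\Sigma^0_1$ test for $[\sigma] \subseteq R_{\mathcal J}$. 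For the complementary test, $[\sigma] \not\subseteq R_{\mathcal J}$ iff $[\sigma] \cap S_{\mathcal J} \neq \emptyset$, i.e.\ some infinite path through $\sigma$ avoids every $A \in \mathcal J$; since $S_{\mathcal J}$ is closed and (as $\mathcal J$ is an ideal closed under the definable covering relation) one can show that failure of containment is witnessed at a finite level — there is some string $\tau \supseteq \sigma$ in $T$ such that $[\tau]$ is disjoint from every $A \in \mathcal J$ whose index is small, and that this stabilizes — one obtains a $\Sigma^0_1$ test for the negation as well. (The point is that ``$[\sigma]\not\subseteq\bigcup\mathcal J$'' is equivalent to ``$\exists$ compact open coset $C$ with $C \subseteq [\sigma]$ and $C \notin \mathcal J$'', using exactly the first-order definability of $A \subseteq \bigcup_i B_i$ recorded just before Definition~\ref{def:id}, together with the fact that compact open cosets form a basis.) Hence $\{\sigma : [\sigma]\subseteq R_{\mathcal J}\}$ is $\Delta^0_1$, so $R_{\mathcal J}$ is computable.

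For the direction ``$R_{\mathcal J}$ computable $\Rightarrow$ $\mathcal J$ computable'': given an index $a \in D$ for an element $A \in \mathcal W$, compute its canonical finite clopen representation $F_A \subseteq T$. Then $A \in \mathcal J$ iff $A \subseteq R_{\mathcal J}$ (since $\mathcal J$ is an ideal and $A$ itself is a compact open coset, $A \subseteq \bigcup\mathcal J$ forces $A \in \mathcal J$ by compactness of $A$ plus the covering-closure property), which in turn holds iff $[\sigma] \subseteq R_{\mathcal J}$ for every $\sigma \in F_A$ — a finite conjunction of instances of the (now decidable) predicate defining the computable open set $R_{\mathcal J}$. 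This is uniformly decidable in $a$, so $\mathcal J$ is a computable subset of $D$.

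The main obstacle I anticipate is the claim, used on both sides, that non-containment $A \not\subseteq \bigcup\mathcal J$ (equivalently $[\sigma]\not\subseteq R_{\mathcal J}$) is \emph{finitely witnessed}, i.e.\ that it is a $\Sigma^0_1$ event rather than merely a $\Pi^0_2$ one. This is precisely where one must use that $\mathcal J$ is an ideal in the sense of Definition~\ref{def:id}: the complement $S_{\mathcal J}$ is closed, and a compact set $A$ fails to be covered by $\bigcup\mathcal J$ exactly when some sub-coset $C \subseteq A$ lies in no member of $\mathcal J$ — and because compact open cosets form a basis and $T$ has computable branching, such a $C$ can be found effectively by searching through strings of $T$ below the relevant node and checking the definable condition $\bigwedge_i [C \cap B_i = \emptyset]$ against the (enumerated) members of $\mathcal J$. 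Making this search terminate and match up with the $\Sigma^0_1$ positive test, so that the two together give a genuine decision procedure, is the delicate bookkeeping step; everything else is a routine unwinding of the definitions of computable open/closed set and the effective coset-vs-string dictionary supplied by the duality.
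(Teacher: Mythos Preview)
Your $\Leftarrow$ direction is essentially identical to the paper's: represent $A\in\mathcal W$ as a finite union $\bigcup_{\eta\in u}[\eta]_T$ and check $[\eta]\subseteq R_{\mathcal J}$ for each $\eta$.

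Your $\Rightarrow$ direction is correct but takes a different, more laborious route. You run a dovetailed two-sided search: a $\Sigma^0_1$ test for $[\sigma]\subseteq R_{\mathcal J}$ (find a finite subcover from $\mathcal J$) and a $\Sigma^0_1$ test for $[\sigma]\not\subseteq R_{\mathcal J}$ (find a coset $C\subseteq[\sigma]$ with $C\notin\mathcal J$). The equivalence you state for the negative side is right, and both searches do terminate, so the argument goes through; but you correctly flag that verifying this is where the work lies. The paper bypasses all of this ``delicate bookkeeping'' with a single stroke: by \cite[Lemma~2.6]{tdlc}, for any nonempty $\sigma$ one can \emph{compute} cosets $A_1,\ldots,A_n\in\mathcal W$ with $[\sigma]=\bigcup_i A_i$, and then $[\sigma]\subseteq R_{\mathcal J}$ iff each $A_i\in\mathcal J$, which is a finite conjunction of decidable questions. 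So the paper's proof is a direct decision procedure rather than a pair of complementary semi-decision procedures. Your approach is self-contained (it does not invoke the cited decomposition lemma), at the cost of the extra verification you anticipated; the paper's is shorter but leans on the external lemma. One minor remark: in your ``technical engine'' you mention recovering the $\mathcal W$-index of $[\sigma]$ when $[\sigma]$ happens to be a coset---but $[\sigma]$ is generally \emph{not} a coset, which is exactly why the decomposition lemma is useful.
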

\begin{proof} As we already noted after Theorem~\ref{thm:ma}, elements of the groupoid are in a $1$-$1$ effective correspondence with the respective cosets in the group.
We elaborate how exactly this leads to the claimed effective  correspondence between $\mathcal J $ and $R_{\mathcal J}\sub [T]$, and give references to the technical claims in \cite{tdlc} that we use.

\rapf Given $\sigma \in T$, we may assume that $length(\sigma)  > 0$, and thus    $[\sigma]_T$ is compact. Hence by~\cite[Lemma~2.6]{tdlc} one can compute $A_1, \ldots, A_n\in \mathcal W $ such that  $\bigcup_i A_i = [\sigma]$. Then $[\sigma] \sub R$ iff $A_i \in \mathcal J$ for each   $i$.

\lapf By the  definition of  $\mathcal W$   from the computable Baire presentation based on the tree $T$, each $A  \in \mathcal W$ is given in the form $\mathcal K_u= \bigcup_{\eta\in u} [\eta]_T$ (as in~\cite[Def.~2.5]{tdlc}) where $u$ encodes a finite set of nonempty strings on $T$. Then $A \in \mathcal J$ iff $[\eta]\sub R$ for each $\eta$ in $u$. 
\end{proof}

\begin{corollary}
 In the notation of the above, it follows that an open-subgroup ideal [closed-subgroup ideal]   $\mathcal J$ is computable    iff   $R_\mathcal J$ [resp., $S_\mathcal J$] is computable.
 \end{corollary}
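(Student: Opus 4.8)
The plan is to deduce the corollary from the immediately preceding lemma, which already shows that an ideal $\mathcal J \sub \mathcal W$ is computable if and only if the open set $R_{\mathcal J} \sub [T]$ is computable. What remains is to pass from $R_{\mathcal J}$ to $S_{\mathcal J}$ in the closed-subgroup case, and to connect both cases to the group-theoretic content (i.e., that $R_\mathcal J$ is a subgroup, resp.\ $S_\mathcal J$ is a subgroup), but the latter is already packaged in Lemma~\ref{lemma:Sj} and the remark preceding it, so strictly speaking the corollary is a bookkeeping statement.

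First I would recall that by definition $S_\mathcal J = [T] - R_\mathcal J$, so $R_\mathcal J$ is a computable open set of $[T]$ if and only if $S_\mathcal J$ is a computable closed set of $[T]$; this is just the definition of a computable closed subset of $[T]$ as one whose complement is computable open, exactly as spelled out in the ``Computable setting'' subsection. Hence from the lemma: $\mathcal J$ is computable $\LR R_\mathcal J$ computable $\LR S_\mathcal J$ computable. For the open-subgroup case, when $\mathcal J$ is an open-subgroup ideal, $R_\mathcal J$ is itself an open subgroup of $G$ (noted just before Lemma~\ref{lemma:Sj}), so ``$\mathcal J$ computable iff $R_\mathcal J$ computable'' is literally the statement of the lemma restricted to such ideals. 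For the closed-subgroup case, when $\mathcal J$ is a closed-subgroup ideal, Lemma~\ref{lemma:Sj} tells us $S_\mathcal J$ is a closed subgroup, and the chain of equivalences above gives ``$\mathcal J$ computable iff $S_\mathcal J$ computable.'' So the proof is essentially: invoke the lemma, then take complements.

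The one genuine point to be careful about — and the step I expect to be the only mild obstacle — is making sure the computable-tree representation being used here is the one that actually corresponds to the given computably locally compact presentation of $G$. This is where Theorem~\ref{thm:mn} (and the remark after it) is used: passing from a computably locally compact presentation to a computable Baire presentation is witnessed by a computable homeomorphism with computable inverse, so computable closed (and open) subsets are preserved under this identification, and likewise the Haar computable copy of $\mathcal W(G)$ produced is canonically tied to this presentation. Thus ``computable'' is unambiguous across the two pictures. I would state the corollary's proof as: ``Immediate from the preceding Lemma together with $S_\mathcal J = [T]-R_\mathcal J$ and the definition of a computable closed subset of $[T]$; for open-subgroup ideals use that $R_\mathcal J$ is a subgroup, and for closed-subgroup ideals use Lemma~\ref{lemma:Sj}.'' No new construction is needed. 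The real work of Theorem~\ref{thm:gr} then consists of combining this corollary with Lemma~\ref{lem:points} and Proposition~\ref{pr:prpr}, which is what the ``Concluding the proof'' subsection is organized around.
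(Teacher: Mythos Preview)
Your proposal is correct and matches the paper's approach: the paper gives no explicit proof of the corollary, treating it as an immediate consequence of the preceding lemma, and your reasoning (invoke the lemma, then take complements using $S_{\mathcal J} = [T] - R_{\mathcal J}$ and the definition of computable closed set) is exactly what makes it immediate. Your additional remarks about Lemma~\ref{lemma:Sj} and the identification via Theorem~\ref{thm:mn} are more than the paper spells out at this point, but they are accurate and anticipate the assembly carried out in the ``Concluding the proof'' paragraph that follows.
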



By Theorem~\ref{thm:mn} and the discussion after it, given a computably locally compact presentation $C$ of a t.d.l.c.~$G$ we can produce
a computable Baire presentation of the group via a tree $T$ and a computable group-homeomorphism $f: C \rightarrow [T]$ with computable inverse.
By Lemma~\ref{lem:points}, computable closed $S_j$ correspond to computable points in the effective closed presentation of $\mathcal S(G)$   based on $[T]$ or, equivalently, based on $C$.
 Let $\mathcal{W}$ be the Haar computable presentation of the dual meet groupoid given by Theorem~\ref{thm:ma}. 
 Lemma~\ref{lemma:Sj} illustrates that closed-subgroup ideals $\mathcal{J}$ are in 1-1 correspondence with closed subgroups $S_\mathcal{J}$ of $G$.
 By the corollary above, the \emph{computable} closed-subgroup ideals are in a $1$-$1$ correspondence with 
computable closed subgroups $S_\mathcal{J}$ in the presentation given by $[T]$.  This is also clearly uniform, and so is the proof of Lemma~\ref{lem:points}.
This gives  Theorem~\ref{thm:gr}.

\

There is nothing special about computability of points and ideals in the proof above. Indeed, we can computably uniformly transform (names of) $X$-computable points into $X$-computable ideals, and vice versa. Thus, 
as we already pointed out in the introduction,  the proof above really gives a `computable homeomorphism' between $\mathcal {CSI}(\mathcal W)$ and $\mathcal S(G)$.  However, formalising this statement would involve various notions that we did not define in the present paper. 
This is because neither of these objects is a computable Polish space.
Thus, we leave the exact formulation of the more general fact (and the formal verification of it)  as a strong conjecture.

\section{Complexity of index sets}
 Recall from Def.\ \ref{def:loc}  
 that  an \emph{$X$-computably compact structure} on a computable Polish space is
 an $X$-computable functional representing the procedure $N^x\rightarrow  K \supseteq B \ni x$ that works for any point $x$ of the space.
 Recall that  Theorem~\ref{thm:3} states that
$$\{i : G_i \mbox{ is  a properly metrized abelian group and }    \mathbb{R} \leq_c G_i \}$$
is arithmetical.

\subsection{Proof of Theorem~\ref{thm:3}}

\begin{proposition}\label{prop:indexstuff}
The index set of properly metrized  Polish groups is arithmetical ($\Pi^0_3$). 
\end{proposition}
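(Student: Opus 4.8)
The goal is to show that the index set
$$\{i : G_i \text{ is a properly metrized Polish group}\}$$
is $\Pi^0_3$. The key point is to analyze what ``properly metrized'' amounts to, namely that every closed ball $B^c(x_j, q)$ centred at a special point with rational radius is compact. The plan is to express ``$G_i$ is a computable Polish group'' together with ``the metric is proper'' as an arithmetical condition, and then count quantifiers.

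\textbf{First steps.} First I would recall that ``$G_i$ is a computable Polish group'' is itself arithmetical: we need the given computable pseudometric on $\NN$ to be an honest metric (a $\Pi^0_2$ condition, since $d(j,k) = 0 \to j = k$ and the triangle inequality need only be checked on rationals; separation amounts to saying distinct indices name distinct points, which is $\Pi^0_1$ after the triangle inequality is known) and the Turing operators representing $\cdot$ and $^{-1}$ to be total and to respect the metric in the sense of effective continuity (again an arithmetical condition of low level). So the extra content is entirely in ``properly metrized''. The natural way to say that $B^c(x_j,q)$ is compact is: for every $n$, there is a finite set of special points $F$ such that $B^c(x_j,q) \subseteq \bigcup_{k \in F} B(x_k, 2^{-n})$, i.e., every point of the closed ball lies within $2^{-n}$ of some point of $F$. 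The statement ``$y \in B^c(x_j,q)$ implies $\exists k\in F\, d(y,x_k) < 2^{-n}$'' quantifies over \emph{all} points $y$, not just special ones, which is the usual source of trouble; but it can be rewritten by noting that a special point witnesses failure of the cover up to arbitrarily small error, so the cover condition becomes: for every special point $x_m$ with $d(x_j, x_m) < q$ (a $\Sigma^0_1$ side condition, with the strict inequality making it robust) and every $\ell$, there is $k \in F$ with $d(x_m, x_k) < 2^{-n} + 2^{-\ell}$. Wrapping this with $\forall n\, \exists F\, \forall m\, (\dots \to \forall \ell\, \exists k \dots)$ gives a $\Pi^0_3$ condition for the compactness of a single basic closed ball, uniformly in $j, q$; and then ``properly metrized'' is ``$\forall j\, \forall q\,$ [that ball is compact]'', still $\Pi^0_3$. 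Intersecting with the arithmetical condition defining computable Polish groups keeps the whole index set $\Pi^0_3$.

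\textbf{Main obstacle.} The delicate part is handling the quantification over non-special points in the definition of compactness while keeping the quantifier count at $\Pi^0_3$ rather than letting it drift to $\Pi^0_4$. The trick is that density of special points lets one replace ``$\forall y \in B^c$'' by ``$\forall$ special $x_m$ with $d(x_j, x_m) \le q$'', absorbing the error slack into the radius of the covering balls (this is exactly the kind of manipulation used in Proposition~\ref{prop:ball} and \cite[Remark~3.18]{EffedSurvey}), and that a finite cover of \emph{all} special points within radius $q$ is, by density and a standard compactness argument, automatically a cover of the whole closed ball up to doubling the radius. One must be slightly careful that $B^c(x_j,q) = \{y : d(x_j,y)\le q\}$ may be strictly larger than $\overline{B(x_j,q)}$, but this does not affect the quantifier complexity. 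A secondary point to check is that one genuinely needs the \emph{pseudometric} to be a metric for ``properly metrized'' to make sense as stated; this is subsumed in the arithmetical ``is a computable Polish group'' clause. I would also remark that no use of the group structure is needed for this proposition — it is a statement about computable Polish \emph{spaces} — so the group operators only contribute the (low-level arithmetical) clause that they are well-defined computable operators; the substance is the $\Pi^0_3$ bound on proper metrizability.
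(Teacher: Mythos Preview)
Your treatment of the ``properly metrized'' clause is essentially the paper's: reduce to compactness of each basic closed ball (the paper uses $\overline{B(x,r)}$, noting $\overline{B(x,r)}\subseteq B^c(x,r)\subseteq\overline{B(x,r')}$ for $r'>r$), and invoke that compactness of a computable Polish space is $\Pi^0_3$; the paper cites \cite{CompComp} while you spell out the total-boundedness predicate directly.

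There is, however, a genuine gap in the other half of your argument. You write that the Turing operators for $\cdot$ and $^{-1}$ being ``total and respect[ing] the metric'' is ``an arithmetical condition of low level'', and later that ``no use of the group structure is needed\dots the group operators only contribute the (low-level arithmetical) clause that they are well-defined''. This is not correct as stated. Totality of a Turing functional on a computable Polish space requires quantifying over \emph{all} fast Cauchy names of points, not just special points; on a non-compact space this is a priori $\Pi^1_1$ (think of totality of a functional on Baire space). Checking totality on special points alone is insufficient, since a functional can converge on every rational yet diverge at a limit point.

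The paper's proof uses the proper metrization itself to bring this down to $\Pi^0_3$. Once one knows the metric is proper, the space acquires a $0'$-computable locally compact structure (Fact~\ref{fu:fu}), and in particular is covered by a uniform sequence $(K_n)$ of $0'$-computably compact sets as in Lemma~\ref{lemma:sigma} relativized to $0'$. Totality of a functional restricted to a compact $K_n$ \emph{is} arithmetical (indeed $\Pi^0_3$, essentially \cite[Lemma~4.2]{pontr}), and the functional is total on $G$ iff it is total on each $K_n$. Only after totality is established can the group axioms be tested on special points as a $\Pi^0_1$ condition. So the proper metric is doing double duty: it is both part of what is being verified and the tool that makes the group-operation check arithmetical. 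Your proposal treats these as independent, which loses the argument.
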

\begin{proof}
Fix $G=G_i$ given by a computable Polish space and two operators acting on the space.
Since we have that $$\overline{B(x, r)}  = cl  \{y: d(x, y) < r\} \subseteq B^c(x,r) =  \{y: d(x, y) \leq r\} \subseteq \overline{B(x, r')}$$
whenever $r'>r$, $G$ is properly metrized iff $\overline{B(x, r)}$ is compact, for all $x,r$, where $x$ can be restricted to range over special points.
First,  view $G$ as a computable Polish space ignoring the group operations; the computable index of the space can be computably reconstructed from $i$.
We claim that it is arithmetical to tell that for every special $x$ and each positive $r \in \mathbb{Q},$ the closure
$\overline{B(x, r)} $ of $B(x, r)$
is compact. The (c.e.) collection of special points in $ B(x, r) = \{y: d(x, y) < r\}$ makes $\overline{B(x, r)}$ a computably Polish space, uniformly in $x, r$.
It is $\Pi^0_3$ to tell whether a given computable Polish space is compact; see \cite{CompComp}.

\begin{fact}\label{fu:fu} Every properly metrized Polish space  admits a $0'$-computably locally compact structure, and this is uniform.
\end{fact}

\begin{proof}
Since every compact Polish space is $0'$-computably compact (and this is uniform), the basic balls and their closures induce a $0'$-computably locally compact ($0'$-c.l.c.) structure on the space.
\end{proof}

Before we proceed  with the proof of  Theorem~\ref{thm:3}, we establish that the index set in the proposition above is sharp. Completely accidentally, it also establishes the $\Pi^0_3$-hardness of the property  of groups from the statement of Theorem~\ref{thm:3}.

  To establish $\Pi^0_3$-completeness in Proposition~\ref{prop:indexstuff}, take the effective sequence $(C_i)$ totally disconnected abelian groups from \cite[Proposition 4.1]{pontr}. 
In this sequence, $C_i$ is profinite iff the $\Pi^0_3$-predicate holds for $i$. If $C_i$ is not profinite, then it is a pro-countable group which is not even locally compact.  In the former case the group is compact, and thus properly metrized.
In the latter case it is not even locally compact, and thus cannot be properly metrized. 
To establish $\Pi^0_3$-hardness in Theorem~\ref{thm:3}, simply  take $C_i \times \mathbb{R}$.

\

We now need to establish that the upper bound in Theorem~\ref{thm:3} is indeed $\Pi^0_3$.
Assume  $G$ is properly metrized (as a space). Using Fact~\ref{fu:fu}, fix $0'$-c.l.c.~structure on the domain of the group.
We claim that it is arithmetical to tell whether the functionals representing the (potential) group operations on a $0'$-c.l.c.~group are well-defined (total). 
This is because they are total if, and only if, for every $n$ their restrictions to $K_n$ are total, where the $K_n$ are the compact sets from  Lemma~\ref{lemma:sigma} (relativized to $0'$) making up the $0'$-c.l.c.~$G$. The totality on each $K_n$ is uniformly $\Pi^0_3$; this is essentially the aforementioned \cite[Lemma 4.2]{pontr}.
Thus, we conclude that totality of the (potential) group operation is an arithmetical property, uniformly in the index of~$G$.

Assuming that the operations are witnessed by total functionals,  
the collection of all points that satisfy the group axioms form an effectively closed set.
Thus, if there are some points that fail the axioms, then there are special points that fail the axioms.  It follows that the group axioms can be tested on special points, which makes the test (uniformly) $\Pi^0_1$ in the index of the group and 
the names of the group operations.
\end{proof}

\begin{remark}
In view of the effective proper metrization result Proposition~\ref{cor:HB},
the proposition above  allows to apply the index set approach to the class of computable compact  groups in a meaningful way.
Also, it contrasts greatly with the  $\Pi^1_1$-completeness of being locally compact Polish space proved by Nies and Solecki~\cite{NSlocal}.
\end{remark}

We return to the proof of Theorem~\ref{thm:3}. By Proposition~\ref{prop:indexstuff}, being a properly metrized group is arithmetical ($\Pi^0_3$). 
Of course, being abelian is a $\Pi^0_1$-property which can be tested on special points.
If $G_i$ is compact, obviously it cannot contain $\mathbb{R}$ as a closed subgroup.
  Recall that we can  arithmetically ($\Pi^0_3$) check whether $G_i$ is compact; however, if we refer to this result, it will increase the complexity of the upper bound on the index set.
  Instead, we assume that $G_i$ has passed all the tests above and, in particular,  is properly metrized. Then it is compact iff 
  $$\exists x \, \mbox{special } \,  \exists r \in \mathbb{Q}  \, \, \, \forall y  \mbox{ special } \, d(x, y) \leq r,$$
  which is $\Sigma^0_2$.
  Thus, we can assume $G_i$ is not compact ($\Pi^0_2$). By 
Fact~\ref{fu:fu}, $G = G_i$ admits a $0'$-computable locally compact structure whose index can be uniformly recovered form $i$ using $0'$.

Using Theorem~\ref{thm:1po} relativized to $0'$, uniformly pass to its  $0'$-computably compact $1$-point compactification $G_i^*$, and uniformly effectively define the  $0'$-computably compact hyperspace $\mathcal K(G^*)$.
Using Theorem~\ref{main:thm}, uniformly produce the $\Pi^0_1(0')$-presentation of $\mathcal S(G)$ inside $\mathcal K(G^*)$.
By Lemma~\ref{0jj}, $\mathcal S(G)$ has a $0''$-computably compact presentation. 

We need the following:

\begin{lemma}[E.g., \cite{pontr}] It is arithmetical ($\Pi^0_1$) whether a given computably compact set is connected. 
\end{lemma}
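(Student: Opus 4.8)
The statement asserts that, for a computably compact metric space $K$, the predicate ``$K$ is connected'' is $\Pi^0_1$ (relative to the compact name of $K$). The strategy is to find a $\Sigma^0_1$ description of disconnectedness: a space $K$ fails to be connected exactly when it can be split into two nonempty, disjoint, relatively open (hence also relatively closed) pieces, and by computable compactness such a splitting is witnessed at a finite level by a pair of finite unions of basic open balls.

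\textbf{Key steps.} First I would recall that, since $K$ is computably compact, for every $n$ we can uniformly produce a finite $2^{-n}$-cover of $K$ by basic open balls, and moreover (as recorded in Proposition~\ref{prop:ball} and the discussion of \cite[Remark~3.18]{EffedSurvey}) for any finite tuple of basic balls, possibly with some replaced by their closed counterparts, the relation ``their intersection with $K$ is nonempty'' is decidable. Second, I would argue the following finite-combinatorial characterization of disconnectedness: $K$ is disconnected if and only if there exist finite sets $\mathcal U$, $\mathcal V$ of basic open balls and some $n$ such that (a) the balls in $\mathcal U \cup \mathcal V$ together form a $2^{-n}$-cover of $K$; (b) $K \cap B \neq \ES$ for at least one $B \in \mathcal U$ and at least one $B \in \mathcal V$; and (c) no ball of $\mathcal U$ meets the closure of any ball of $\mathcal V$, i.e.\ $\overline{B} \cap \overline{B'} = \ES$ (equivalently $B^c \cap (B')^c = \ES$ as a decidable metric condition) for all $B \in \mathcal U$, $B' \in \mathcal V$. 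Each of (a), (b), (c) is decidable by computable compactness, so the existence of such a witness is $\Sigma^0_1$, and the claim follows by taking complements.

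For the correctness of that characterization: if such a witness exists, put $A = K \cap \bigcup \mathcal U$ and $C = K \cap \bigcup \mathcal V$; by (c) the closures of $\bigcup\mathcal U$ and $\bigcup \mathcal V$ are disjoint, so $A$ and $C$ are disjoint relatively clopen sets, by (a) they cover $K$, and by (b) both are nonempty, so $K$ is disconnected. Conversely, if $K = A \sqcup C$ with $A,C$ nonempty, disjoint, and closed in $K$, then $\delta := d(A,C) > 0$ by compactness; choosing $n$ with $2^{-n} < \delta/4$ and taking a $2^{-n}$-cover of $K$, sort each ball into $\mathcal U$ or $\mathcal V$ according to whether it meets $A$ or $C$ (no ball can meet both, by the choice of $n$), and one checks that condition (c) holds with the $\delta/4$ slack. (One must be slightly careful that the cover genuinely separates---this is where the factor $\delta/4$ versus $2^{-n}$ buys enough room for the closed balls $B^c$ of radius $\le 2^{-n}$ to stay disjoint.)

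\textbf{Main obstacle.} The only delicate point is the quantitative bookkeeping in the converse direction: matching the radius of the cover against $d(A,C)$ so that the \emph{closed} balls $B^c$ (which may be strictly larger than $\overline{B}$) still satisfy the disjointness condition (c), and confirming that ``$B^c \cap (B')^c = \ES$'' is genuinely a $\Sigma^0_1$ (indeed decidable) condition in the sense of the cited intersection-decidability results rather than merely co-c.e. Once the slack is chosen generously (e.g.\ radius $< d(A,C)/4$), this is routine, and no genuinely new idea beyond computable compactness is needed; the result is essentially the one already invoked from \cite{pontr}.
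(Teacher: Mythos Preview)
Your proposal is correct and takes essentially the same approach as the paper: the paper simply invokes \cite[Lemma~4.21]{EffedSurvey} to the effect that one can effectively list all nontrivial clopen splits of a computably compact space, and your argument spells out precisely this enumeration via pairs of finite, formally separated subcovers. One minor remark: for conditions (a)--(c) you only need (and, for (a) and (b), only directly have) that each is $\Sigma^0_1$ rather than outright decidable, but that is exactly what the existential quantifier over witnesses requires.
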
 
\begin{proof}
By \cite[Lemma 4.21]{EffedSurvey}, we can effectively list all clopen non-trivial splits of the space. 
The space is connected iff such a split is never found; this is $\Pi^0_1$.
\end{proof}
Relativising the lemma above to $0''$, we conclude that the following is an arithmetical ($\Pi^0_3$) property:
$$\mathcal S(G) \mbox{ is connected.}$$

In summary, it is arithmetical ($\Pi^0_3$) to tell whether $G_i$ is an abelian properly metrized~group so that its Chabauty space $\mathcal S(G)$  is connected. 
To finish the proof of the theorem,  apply the following result is due to Protasov and Tsybenko~\cite{UKRcon}: 
\emph{Suppose $G$ is a locally compact group.  If its Chabauty space $\mathcal S(G)$ is connected, then some subgroup of $G$   is topologically isomorphic to $(\mathbb{R},+)$. For abelian groups, the converse holds.}

\appendix

\section{Proof of Proposition~\ref{cor:HB}}\label{A}

We use Lemma~\ref{lemma:sigma} to implement an idea similar to that suggested in~\cite{regular_stuff}, which in turn the authors attribute to H. E.~Vaughan.
In the notation of Lemma~\ref{lemma:sigma}, define $$f_n: M \rightarrow [0,1]$$
to be
$$
f_n(x) = \begin{cases}
   d(x, K_n)/ c_n & \text{ if }  d(x, K_n) \leq c_n,   \\
  
  1 & \text{ if } d(x, K_n) \geq c_n,  \\
 
\end{cases}
$$
where the two cases are not mutually exclusive but evidently agree at when $d(x, K_n) = c_n$.
Since $K_n$ is computably compact uniformly in $n$, $(f_n)_{n \in \omega}$ is a uniformly computable  sequence of functions.
Note also that, for each fixed $x \in M$, the sequence $(f_n(x))_{n \in \omega}$ is eventually zero.
Define 
$$f(x) = \sum_{n \in \omega} f_n(x),$$
which is well-defined and is computable.
Finally, define
$$\delta(x,y) = d(x,y) + | f(x) - f(y) |.$$
It is clear that $\delta$ is indeed a metric; we claim it is also equivalent to $d$, i.e.~that the metrics share the same converging sequences.
Since $\delta \geq d$, we need only to show that every $d$-converging sequence also converges with respect to $\delta$; but this follows from
computability (thus, continuity) of $f$.  In particular, we may use the same dense sequence $(x_i)$ in $M$ as we used for $d$, and we get that
$\overline{((x_i), \delta) }$ is a computable Polish space such that $\overline{((x_i), \delta) } = \overline{((x_i), d)} = M$.

We show $\delta$ is proper.
If a closed $\delta$-ball $B^c(x, r)$ is not fully contained in one of the compact sets $K_n$, then there will be an infinite sequence $(y_m)$ so that 
$y_m \notin K_{m+1}$, and thus
$f(y_m) = \sum_n f_n(y_i) \geq m$, contradicting that we must have   $$\delta(x, y_m) = d(x, y_m) + |f(x) + f(y_m)| \leq r.$$ 
it follows that each basic closed ball has to be compact.

To see why $\delta$ and $d$ are effectively compatible, first
recall that $\overline{((x_i), \delta) } = \overline{((x_i), d)} = M$.
The pre-image of each basic open $\delta$-ball $B(x,r)$ under the identity map is simply
$$\{y: d(x,y) + |f(x) -f(y)| <r  \} $$
which is clearly an effectively open set with respect to $d$.
On the other hand, if $(x_i)$ is a fast Cauchy sequence with respect to $\delta$,
$$\delta(x_i, x_{i+1})  =  d(x_i, x_{i+1}) + | f(x_i) -f(x_{i+1}) | < 2^{-i},$$
then clearly $(x_i)$ is also fast Cauchy for $d$. It remains to interpret the latter as the image of $ x = \lim_i x_i$ under $id^{-1}$, where the limit can be taken with respect to either metric.

\section{Proof of Fact~\ref{fact:ChAl}}\label{B}


Recall that the topology of the 1-point compactification 
$G^* = G \cup \{\infty\}$ of $G$ is given by 

-  the open sets of $G$, and

 - the sets $(G \setminus K) \cup \{\infty \}$, where $K$ ranges over compact subsets of $G$.

The Vietoris topology on the set of   closed   subsets of $G^*$ is generated by the sub-basic sets 
$$ \{ F : F\cap V \neq \emptyset\}  \mbox{ and } \{F: F \subseteq U \}  $$
where $U, V$ are open in $G^*$. It is well-known that the  topology induced by the Hausdorff metric and  the Vietoris topology are equivalent.
The Chabauty topology on the closed subgroups of $G$ is generated by the sub-basic sets
 $$\{F : F \cap K = \emptyset \} \mbox{ and } \{F:  F \cap V \neq \emptyset \}$$
 where $K$ is compact and $V$ open in $G$.

\smallskip

 Given a closed subgroup (more generally, subset) $C \subseteq G$, define
 $$i(C)=  C ^* = C \cup \{\infty\}. $$
 This yields as a map from the Chabauty space  of $G$ to the hyperspace $\mathcal K(G^*)$ of the $1$-point compactification of $G$.
 The map is evidently injective.
 We show that it is continuous, and thus a homeomorphism because $\mathcal S(G)$ is    compact.



Open sets in the Vietoris topology on $\mathcal K(G^*)$ are of two kinds, given above. For the first kind, let  $V$ is open in $G \cup \{\infty\}$. If  $\infty \not \in V$, then it is open in $G$ and
  $$i^{-1}\{F^*:  F^* \cap V \neq \emptyset \} = \{F:  F \cap V \neq \emptyset \} $$
  which is open in Chabauty topology.
If $\infty \in V$, the pre-image is the whole space $S(G)$.

Now consider an open set of the second kind, specifically  $\{F^*: F^* \subseteq U \}$ where $U$ is open in $G^*$.
If $U$ does not contain $\infty$, then the set is empty and thus so is its $i$-preimage.
Otherwise,  $U$  has to be of the form 
$(G \setminus K) \cup \{\infty \}$, where $K$ is compact in $G$;  in particular, $\infty \notin K$.

We arrive at $$i^{-1}\{F^*: F^* \subseteq U \} =  \{F : F \subseteq  (G \setminus K) \}  = \{F : F \cap K = \emptyset\}$$
which is open in the Chabauty topology.




%
%
%

\def\cprime{$'$} \def\cprime{$'$} \def\cprime{$'$} \def\cprime{$'$}

\end{document}